\documentclass{article}%
\usepackage{amsmath}
\usepackage{amsfonts}
\usepackage{amssymb}
\usepackage{graphicx}%
\providecommand{\U}[1]{\protect\rule{.1in}{.1in}}
\newtheorem{theorem}{Theorem}

\newtheorem{corollary}[theorem]{Corollary}

\newtheorem{definition}[theorem]{Definition}

\newtheorem{lemma}[theorem]{Lemma}

\newtheorem{proposition}[theorem]{Proposition}
\newtheorem{remark}[theorem]{Remark}

\newenvironment{proof}[1][Proof]{\noindent\textbf{#1.} }{\ \rule{0.5em}{0.5em}}
\begin{document}

\title{About the structure of attractors for a nonlocal Chafee-Infante problem}
\author{R. Caballero$^{1}$, A.N. Carvalho$^{2}$, P. Mar\'{\i}n-Rubio$^{3}$ and
Jos\'{e} Valero$^{1}$\\$^{1}${\small Centro de Investigaci\'{o}n Operativa, Universidad Miguel
Hern\'{a}ndez de Elche,}\\{\small Avda. Universidad s/n, 03202, Elche (Alicante), Spain}\\{\small E.mail:\ jvalero@gmail.com}\\$^{2}${\small Instituto de Ci\^{e}ncias Matem\'{a}ticas e de Computa\c{c}ao,
Universidade de S\~{a}o Paulo,}\\{\small Campus de S\~{a}o Carlos, Caixa Postal 668, 13560-970 S\~{a}o Carlos,
SP, Brazil}\\{\small Email: andcarva@icmc.usp.br}\\$^{3}${\small Dpto. Ecuaciones Diferenciales y An\'{a}lisis Num\'{e}rico,}\\{\small Universidad de Sevilla, C/Tarfia, 41012-Sevilla, Spain}\\{\small E.mail: pmr@us.es}}
\date{}
\maketitle

\begin{abstract}
In this paper, we study the structure of the global attractor for the
multivalued semiflow generated by a nonlocal reaction-diffusion equation in
which we cannot guarantee uniqueness of the Cauchy problem.

First, we analyse the existence and properties of stationary points, showing
that the problem undergoes the same cascade of bifurcations as in the
Chafee-Infante equation. Second, we study the stability of the fixed points
and establish that the semiflow is dynamically gradient. We prove that the
attractor consists of the stationary points and their heteroclinic connections
and analyse some of the possible connections.

\end{abstract}

\bigskip

\textbf{Keywords: }reaction-diffusion equations, nonlocal equations, global
attractors, multivalued dynamical systems, structure of the attractor,
stability, Morse decomposition

\textbf{AMS Subject Classification (2010): }35B40, 35B41, 35B51, 35K55, 35K57

\section{Introduction}

Ordinary and partial differential equations play a key role in modeling for
all sciences: Engineering, Physics, Chemistry, Biology, Medicine, Economy and
many others. The right understanding of the behavior of solutions (in
particular, well-posedness versus blow-up) means not only to predict the
future of trajectories but also to establish strategies for control (i.e.
optimization). Concerning PDE and Economy, it is interesting to cite the nice
survey \cite{bcm14} and the references therein on many different problems
dealing with effects as aggregation and repulsion, optimal control, mean-field
games, and so on as applications.

Parabolic PDE models reflect the diffusion phenomena due to \emph{local
touching of molecules} and dissipation of energy, and when different internal
and external factors come into play, it links naturally to some
reaction-diffusion models, as the growth versus capacity of the environment in
Biology or the endogenous growth versus the neoclassical theories in economy.
In particular, capital accumulation distribution in space and time following
spatial extensions of the continuous Ramsey model \cite{ramsey} by Brito
\cite{brito01,brito04,brito12} and others later uses the semilinear parabolic
PDE
\[
\partial_{t}u-\alpha\Delta u=f(u)-c.
\]
This spatiality introduces important issues about the steady states
distribution as well as the dynamic evolution, convergence, local interaction
among local agents, and so on.

Not for the sake of generality but for real modeling purposes, in the last two
decades the increment of nonlocal PDE models that attempt to capture in a more
accurate way the real spreading of the problem (density of population, capital
accumulation, consumption or prices and innovation indexes, and so on) has
been very important. Firstly we might comment about extensions by using some
nonlocal operators acting in the right-hand side of the PDE and/or the
boundary conditions as integral operators, leading to integro-differential
equations. Among others we can cite \cite{anita} for a system coupling capital
and pollution stock model, a population dynamic model in \cite{deng-wu}
\[
\partial_{t}u-\alpha\Delta u=u\left(  f(u)-\alpha\int_{\mathbb{R}^{N}%
}g(x-y)u(y,t)dy\right)  ,
\]
the elliptic (stationary) counterpart in population/physics models as the
Fischer-KPP \cite{AK}, or a logistic model \cite{delgado}. Secondly, we wish
to point out that the nonlocal extensions have also been performed on the
diffusion operators as well. The literature about fractional laplacian is vast
nowadays. However, let us concentrate in an intermediate step. Coming
originally from modeling of bacteria population in Biology, the introduction
of a nonlocal viscosity in front of the laplacian has become an interesting
problem for different applications and for its mathematical study, as for
example occurs in the equation
\[
u_{t}-a(\int_{\Omega}g(y)u(t,y)dy)\Delta u=f(t).
\]
In this way, the spreading (or aggregating/concentrating) effects are given by
the increasing (resp. non-increasing) function $a$ as a viscosity nonlocal
coefficient. One should cite Prof. Chipot and his collaborators
\cite{ChipotLovat97,d3,d41,d5,d4,ChipotValente,ZhengChipot} among others for a
detailed analysis including existence, uniqueness, steady states and
convergence of evolutionary solutions to equilibria.

When the reaction term $f$ depends on the unknown $u$
\begin{equation}
u_{t}-a(\Phi_{\Omega}(u(t))\Delta u=f(t,u) \label{Nonlocal}%
\end{equation}
(here the functional $\Phi_{\Omega}$ may represent a general nonlocal
functional acting over the whole domain $\Omega,$ for instance $\Vert
u(t)\Vert_{H_{0}^{1}}^{2}$ or $\int_{\Omega}g(y)u(t,y)dy$) equilibria are
difficult to analyse. Opposite to ordinary differential equations, the
analysis of existence of stationary states for the above problem is much more
involved. Also, comparing with reaction-diffusion equations with local
diffusion, another difficulty is that in general a Lyapunov functional is not
known to exist in most cases.

The dynamical analysis of problem (\ref{Nonlocal}) and in particular the
existence of global attractors have been established till now in several
papers (cf. \cite{Ahn,CabMarVal,CaHeMa15,d0,CaHeMa18}). Other differential
operators as the $p$-laplacian coupled with nonlocal viscosity has also been
considered (cf. \cite{CAHeMa17,CaHeMa18,CaHeMa18B}). However, in general
little is known about the internal structure of the attractor, which is very
important as it gives us a deep insight into the long-term dynamics of the
problem. When we manage to obtain a Lyapunov functional some insights can be obtained.

If we consider the non-local equation%
\begin{equation}
\dfrac{\partial u}{\partial t}-a(\Vert u\Vert_{H_{0}^{1}}^{2})\dfrac
{\partial^{2}u}{\partial x^{2}}=\lambda f(u) \label{Nonlocal2}%
\end{equation}
with Dirichlet boundary conditions, then it is possible to define a suitable
Lypaunov functional. In \cite{CabMarVal} it is shown that regular and strong
solutions generate (possibly) multivalued semiflows having a global attractor
which is described by the unstable set of the stationary points. Although this
is already a good piece of information, our goal is to describe the structure
of the attractor as accurately as possible. For this aim we need to study the
particular situation where the domain is one-dimensional and the function $f$
is of the type of the standard Chafee-Infante problem, for which the dynamics
inside the attractor has been completely understood \cite{Henry85}.

The first step when studying the structure of the attractor consists in
analysing the stationary points. In the case where the function $f$ is odd and
equation (\ref{Nonlocal2}) generates a continuous semigroup the existence of
fixed points of the type given in the Chafee-Infante problem was established
in \cite{CarLiLuMo}. Moreover, if $a$ is non-decreasing, then they coincide
with the ones in the Chafee-Infante problem and, moreover, in
\cite{carvalestef} the stability and hyperbolicity of the fixed points is
studied. In this paper we extend these results for a more general function $f$
(not necessarily odd and for which we do not known whether the Cauchy problem
has a unique solution or not), showing that equation (\ref{Nonlocal2})
undergoes the same cascade of bifurcations as the Chafee-Infante equation.
Moreover, when we allow the function $a$ to decrease, though the problem
possesses at least the same fixed point as in the Chafee-Infante problem, we
show that more equilibria can appear. For a non-decreasing function $a$ and an
odd function $f$ we prove also that even when uniqueness fails the stability
of the fixed points is the same as for the corresponding ones in the
Chafee-Infante problem. Finally, we are able to prove that in this last case
the semiflow is dynamically gradient with respect to the disjoint family of
isolated weakly invariant sets generated by the equilibria, which is ordered
by the number of zeros of the fixed points. More precisely, the attractor
consists of the set of equilibria and their heteroclinic connections and a
connection from a fixed point to another is allowed only if the number of
zeros of the first one is greater.

In Section 3 we study the existence of strong solutions of the Cauchy problem
in the space $H_{0}^{1}$. In Section 4 we prove that strong solutions generate
a multivalued semiflow in $H_{0}^{1}$ having a global attractor which is equal
to the unstable set of the stationary points. In Section 5 we study the
existence and properties of equilibria. In Section 6 we analyse the stability
of the fixed points and establish that the semiflow is dynamically gradient.

\section{Setting of the problem}

Let us consider the following problem
\begin{equation}
\left\{
\begin{array}
[c]{l}%
\dfrac{\partial u}{\partial t}-a(\Vert u\Vert_{H_{0}^{1}}^{2})\dfrac
{\partial^{2}u}{\partial x^{2}}=\lambda f(u)+h(t),\quad t>0,x\in\Omega,\\
u(t,0)=u(t,1)=0,\\
u(0,x)=u_{0}(x),
\end{array}
\right.  \label{problem1}%
\end{equation}
where $\Omega=(0,1)$ and $\lambda>0$. Throughout the paper we will use the
following conditions (but not all of them at the same time):

\begin{enumerate}
\item[(A1)] $f\in C(\mathbb{R})$.

\item[(A2)] $f(0)=0.$

\item[(A3)] $f^{\prime}(0)$ exists and $f^{\prime}(0)=1$.

\item[(A4)] $f$ is strictly concave if $u>0$ and strictly convex if $u<0$.

\item[(A5)] Growth and dissipation conditions: for $p\geq2,\ C_{i}%
>0,\ i=1,..,4$, we have%
\begin{equation}
|f(u)|\leq C_{1}+C_{2}|u|^{p-1}, \label{A51}%
\end{equation}
\begin{equation}
f(u)u\leq C_{3}-C_{4}|u|^{p}\text{, if }p>2, \label{A52}%
\end{equation}
\begin{equation}
\limsup_{u\rightarrow\pm\infty}\frac{f(u)}{u}\leq0\text{, if }p=2. \label{A53}%
\end{equation}

\item[(A6)] The function $a\in C(\mathbb{R}^{+})$ satisfies:
\[
a(s)\geq m>0.
\]

\item[(A7)] The function $a\in C(\mathbb{R}^{+})$ satisfies:%
\[
a(s)\leq M_{1},\quad\forall s\geq0,
\]
where $M_{1}>0.$

\item[(A8)] The function $a\in C(\mathbb{R}^{+})$ is non-decreasing.

\item[(A9)] $h\in L_{loc}^{2}\left(  0,+\infty;L^{2}\left(  \Omega\right)
\right)  .$

\item[(A10)] $h$ does not depend on time and $h\in L^{2}\left(  \Omega\right)
.$
\end{enumerate}

We define the function $\mathcal{F}(u)=\int_{0}^{u}f(s)ds$. We observe that
from (\ref{A51}) we have%
\begin{equation}
|\mathcal{F}(s)|\leq\widetilde{C}(1+|s|^{p})\quad\forall s\in\mathbb{R}%
\text{,} \label{condFA}%
\end{equation}
whereas (\ref{A52}) implies%
\begin{equation}
\mathcal{F}(s)\leq\widetilde{\kappa}-\widetilde{\alpha}_{1}|s|^{p}.
\label{condFB}%
\end{equation}
Also, from condition (\ref{A53}) it follows that for all $\varepsilon>0$,
there exists a constant $M>0$ such that $\frac{f(u)}{u}\leq\varepsilon,$ for
all $|u|\geq M$. Hence, there exists $m_{\varepsilon}>0$ such that
\begin{equation}
f(u)u\leq m_{\varepsilon}+\varepsilon u^{2},\quad\forall u\in\mathbb{R}.
\label{condf}%
\end{equation}
In addition, it follows that
\begin{equation}
\mathcal{F}(u)\leq\varepsilon u^{2}+C_{\varepsilon}, \label{condF}%
\end{equation}
where $C_{\varepsilon}>0$. These two inequaities are also true under condition
(\ref{A52}).

The main aim of this paper consists in describing in as much detail as
possible the internal structure of the global attractor in a similar way as
for the classical Chafee-Infante equation.

Some of these conditions will be used all the time, whereas other ones will be
used only in certain results. In particular, the function $h$ will be
considered as a time-dependent function satisfying (A9) only for establishing
the existence of solution for problem (\ref{problem1}). However, since we will
study the asymptotic behaviour of solutions in the autonomous situation, for
the second part concerning the existence and properties of global attractors
the function $h$ will be time-independent, so assumption (A10) will be used
instead. Finally, in order to study the structure of the global attractors in
terms of the stationary points and their possible heteroclinic connections we
will assume that $h\equiv0.$

Throughout the paper, $\left\Vert \text{\textperiodcentered}\right\Vert _{X}$
will denote the norm in the Banach space $X.$

\section{Existence of solutions}

In this section we will establish the existence of strong solutions for
problem (\ref{problem1}) with initial condition in the phase space $H_{0}%
^{1}\left(  \Omega\right)  $. Although we will follow the same lines of a
similar result given in \cite{CabMarVal}, we would like to point out that in
the present case, as we are working in a one-dimensional problem, the
assumptions on the function $f$ are much weaker. In particular, we do not need
to impose a growth assumption of any kind.

\begin{definition}
For $u_{0}\in L^{2}(\Omega)$, a weak solution to (\ref{problem1}) is an
element $u\in L^{\infty}(0,T;L^{2}(\Omega))\cap L^{2}(0,T;H_{0}^{1}(\Omega)),$
for any $T>0,$ such that
\begin{equation}
\frac{d}{dt}(u,v)+a(\Vert u\Vert_{H_{0}^{1}}^{2})(\nabla u,\nabla
v)=\lambda(f(u),v)+(h(t),v)\quad\forall v\in H_{0}^{1}(\Omega
),\label{equationweak}%
\end{equation}
where the equation is understood in the sense of distributions.
\end{definition}

As usual, let $A:D(A)\rightarrow H,\ D(A)=H^{2}(\Omega)\cap H_{0}^{1}\left(
\Omega\right)  ,$ be the operator $A=-\dfrac{d^{2}}{dx^{2}}$ with Dirichlet
boundary conditions. This operator is the generator of a $C_{0}$-semigroup
$T(t)=e^{-At}$.

\begin{definition}
For $u_{0}\in H_{0}^{1}(\Omega)$, a strong solution to (\ref{problem1}) is a
weak solution with the extra regularity $u\in L^{\infty}(0,T;H_{0}^{1}%
(\Omega))$, $u\in L^{2}(0,T;D(A))$ and $\dfrac{du}{dt}\in L^{2}(0,T;L^{2}%
(\Omega))$ for any $T>0.$
\end{definition}

\begin{remark}
We observe that if $u$ is a strong solution, then $u\in C([0,T];H_{0}%
^{1}(\Omega))$ (see \cite[p.102]{sellyou}). By this way, the initial condition
makes sense.
\end{remark}

\begin{remark}
Since $\dfrac{du}{dt}\in L^{2}\left(  0,T;L^{2}\left(  \Omega\right)  \right)
$ for any strong solution, in this case equality (\ref{equationweak}) is
equivalent to the following one:%
\begin{align}
&  \int_{0}^{T}\int_{\Omega}\frac{du\left(  t,x\right)  }{dt}\xi\left(
t,x\right)  dxdt-\int_{0}^{T}a(\Vert u(t)\Vert_{H_{0}^{1}}^{2})\int_{\Omega
}\frac{\partial^{2}u}{\partial x^{2}}\xi dxdt\label{EquationRegular}\\
&  =\int_{0}^{T}\int_{\Omega}\lambda f\left(  u\left(  t,x\right)  \right)
\xi\left(  t,x\right)  dxdt+\int_{0}^{T}\int_{\Omega}h\left(  t,x\right)
{\xi}\left(  t,x\right)  dxdt,\nonumber
\end{align}
for all $\xi\in L^{2}\left(  0,T;L^{2}\left(  \Omega\right)  \right)  .$
\end{remark}

\begin{theorem}
\label{existence}Assume conditions (A1), (A6) and (A9). Assume also the
existence of constants $\beta,\gamma>0$ such that%
\begin{equation}
f\left(  u\right)  u\leq\gamma+\beta u^{2}\text{ for all }u\in\mathbb{R}.
\label{Diss}%
\end{equation}
Then, for any $u_{0}\in H_{0}^{1}(\Omega)$ problem (\ref{problem1}) has at
least one strong solution.
\end{theorem}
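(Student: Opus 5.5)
The plan is to use Faedo--Galerkin approximations together with a truncation of the nonlinearity. Since $f$ is only continuous and satisfies no growth condition, I first approximate $f$ by a sequence $f_n$ of locally Lipschitz functions, for instance mollifications of $f$ composed with a truncation at level $n$. I arrange that $f_n \to f$ uniformly on compact sets and that the dissipativity bound (\ref{Diss}) holds for every $n$ with the same constants, up to a harmless enlargement of $\gamma$. Let $w_j=\sqrt{2}\sin(j\pi x)$ be the eigenfunctions of $A$ and let $P_m$ be the projection onto their span. I then consider the finite-dimensional problem
\[
u_m'+a(\|u_m\|_{H_0^1}^2)Au_m=\lambda P_m f_n(u_m)+P_m h,\qquad u_m(0)=P_m u_0 ,
\]
taking $n=m$ for simplicity. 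This is an ODE system with continuous right-hand side, so Peano's theorem gives local solutions. These become global once the a priori bounds below are available.

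The first estimate comes from multiplying by $u_m$:
\[
\frac{d}{dt}\|u_m\|^2+2m\|u_m\|_{H_0^1}^2\le 2\lambda\gamma|\Omega|+2\lambda\beta\|u_m\|^2+2(h,u_m),
\]
and Gronwall then bounds $u_m$ in $L^\infty(0,T;L^2)\cap L^2(0,T;H_0^1)$. The key estimate comes from multiplying by $Au_m$, which I can do because $Au_m$ lies in the Galerkin space. Integrating by parts,
\[
(f_n(u_m),Au_m)=\int f_n'(u_m)|\partial_x u_m|^2 ,
\]
and this term is not controlled without a bound on $f'$. I will therefore use the one-dimensional structure instead. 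I multiply by $u_m'$ and use the primitive $\mathcal F_n$. Setting $\mathcal A(s)=\int_0^s a(r)\,dr$, this gives
\[
\|u_m'\|^2+\frac{1}{2}\frac{d}{dt}\mathcal A(\|u_m\|_{H_0^1}^2)=\lambda\frac{d}{dt}\int_\Omega\mathcal F_n(u_m)+(h,u_m').
\]
Assumption (A6) gives $\mathcal A(s)\ge ms$. The dissipativity bound (\ref{Diss}) implies $\mathcal F_n(s)\le \gamma'+\beta s^2/2$ uniformly in $n$, which handles the term at time $t$ through the $L^2$ bound. For the term at time $0$, $P_m u_0$ is bounded in $H_0^1\subset L^\infty$, so $\int \mathcal F_n(P_m u_0)$ is bounded uniformly in $m$ and $n$. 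Integrating in time then bounds $u_m$ in $L^\infty(0,T;H_0^1)$ and $u_m'$ in $L^2(0,T;L^2)$. Since $H_0^1\subset L^\infty$ in one dimension, $u_m$ is now uniformly bounded in $L^\infty((0,T)\times\Omega)$ by some $R$. On $[-R,R]$ the functions $f_n$ are uniformly bounded, which yields $Au_m$ bounded in $L^2(0,T;L^2)$ directly from the equation, with $a$ bounded below by $m$. This is also where the lack of growth conditions is dispensed with.

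For passage to the limit, Aubin--Lions gives $u_m\to u$ strongly in $C([0,T];L^2)$ and in $L^2(0,T;H_0^1)$, weakly-star in $L^\infty(0,T;H_0^1)$, and weakly in $L^2(0,T;D(A))$, with $u_m'\rightharpoonup u'$. Uniform convergence on $[-R,R]$ together with pointwise a.e. convergence gives $f_n(u_m)\to f(u)$ in $L^2((0,T)\times\Omega)$. The main obstacle is the nonlocal coefficient, because $a(\|u_m\|_{H_0^1}^2)$ needs $\|u_m(t)\|_{H_0^1}\to\|u(t)\|_{H_0^1}$ for a.e. $t$. I would obtain this from the strong convergence in $L^2(0,T;H_0^1)$, which follows from the $L^2(0,T;D(A))$ bound and the compactness of $D(A)\subset H_0^1$ via Aubin--Lions applied to the pair $(D(A),H_0^1,L^2)$. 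A subsequence then converges for a.e. $t$, continuity of $a$ gives convergence of the coefficient, and dominated convergence applies because $a$ is bounded on the bounded range of $\|u_m\|_{H_0^1}^2$. The limiting product $a(\|u_m\|_{H_0^1}^2)Au_m$ converges weakly in $L^2$ as a product of a strongly convergent bounded factor and a weakly convergent one. Passing to the limit in (\ref{EquationRegular}) then shows that $u$ is a strong solution with $u(0)=u_0$.
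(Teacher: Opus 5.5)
Your proposal is correct and follows essentially the paper's route. The shared steps are: Galerkin approximations solved by Peano, the $L^2$ estimate from (\ref{Diss}), the energy estimate obtained by testing with $u_m'$ and using the primitive of $a$, the one-dimensional embedding $H_0^1\subset L^\infty$ to control the nonlinearity and hence $Au_m$ in $L^2(0,T;L^2)$, and Aubin--Lions with a.e.\ convergence and dominated convergence for the nonlocal coefficient. The differences are minor. Your regularization $f_n$ of $f$ is unnecessary: the paper works with $f$ itself, and Peano's theorem, which you need anyway because $a$ is only continuous, already handles a continuous nonlinearity. You also read off the $D(A)$ bound directly from the Galerkin equation and get $u(0)=u_0$ from the $C([0,T];L^2)$ convergence. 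The paper instead tests with $-\Delta u_n$ for the first and uses time-dependent test functions for the second.
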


\begin{remark}
Assumption (\ref{Diss}) is weaker than the dissipative property (\ref{condf})
as the constant $\varepsilon$ is arbitrarily small. Due to the fact that we
are working in a one-dimensional domain, no growth condition of the type given
in (A5) is necessary in order to prove existence of solutions. Also,
(\ref{Diss}) implies that%
\begin{equation}
F\left(  u\right)  \leq\widetilde{\gamma}+\widetilde{\beta}u^{2} \label{Diss2}%
\end{equation}
for some constants $\widetilde{\gamma},\widetilde{\beta}>0$.
\end{remark}

\begin{proof}
Consider a fixed value $T>0.$ In order to use the Faedo-Galerkin method let
$\{w_{j}\}_{j\geq1}$ be the sequence of eigenfunctions of $-\Delta$ in
$H_{0}^{1}(\Omega)$ with homogeneous Dirichlet boundary conditions, which
forms a special basis of $L^{2}(\Omega)$. Since $\Omega$ is a bounded regular
domain, it is known that $\{w_{j}\}\subset H_{0}^{1}(\Omega)$ and that
$\cup_{n\in\mathbb{N}}V_{n}$ is dense in the spaces $L^{2}(\Omega)$ and
$H_{0}^{1}(\Omega)$, where $V_{n}=span[w_{1},\ldots,w_{n}].$ As usual, $P_{n}$
will be the orthogonal projection in $L^{2}\left(  \Omega\right)  $, that is
\[
z_{n}:=P_{n}z=\sum_{j=1}^{n}(z,w_{j})w_{j},
\]
and $\lambda_{j}$ will be the eigenvalues associated to the eigenfunctions
$w_{j}$. For each integer $n\geq1$, we consider the Galerkin approximations
\[
u_{n}(t)=\sum_{j=1}^{n}\gamma_{nj}(t)w_{j},
\]
which are given by the following nonlinear ODE system
\begin{equation}
\left\{
\begin{array}
[c]{l}%
\dfrac{d}{dt}(u_{n},w_{i})+a(\Vert u_{n}\Vert_{H_{0}^{1}}^{2})(\nabla
u_{n},\nabla w_{i})=\lambda(f(u_{n}),w_{i})+(h,w_{i})\quad\forall
i=1,\ldots,n,\\
u_{n}(0)=P_{n}u_{0}.
\end{array}
\right.  \label{1.6}%
\end{equation}
We observe that $P_{n}u_{0}\rightarrow u_{0}$ in $H_{0}^{1}(\Omega)$. This
Cauchy problem possesses a solution on some interval $[0,t_{n})$ and by the
estimates in the space $L^{2}(\Omega)$ of the sequence $\{u_{n}\}$ given below
for any $T>0$ such a solution can be extended to the whole interval $[0,T]$.

Firstly, multiplying the equation in (\ref{1.6}) by $\gamma_{ni}(t)$ and
summing from $i=1$ to $n$, we obtain
\begin{equation}
\frac{1}{2}\frac{d}{dt}\Vert u_{n}(t)\Vert_{L^{2}}^{2}+a(\Vert u_{n}%
\Vert_{H_{0}^{1}}^{2})\Vert u_{n}(t)\Vert_{H_{0}^{1}}^{2}=\lambda
(f(u_{n}(t),u_{n}(t))+(h(t),u_{n}(t))\quad\text{for\ a.e.}\ t\in
(0,t_{n}).\label{7}%
\end{equation}
Using the Young and Poincar\'{e} inequalities we deduce that
\[
(h(t),u_{n}(t))\leq\frac{m}{2}\Vert u_{n}(t)\Vert_{H_{0}^{1}}^{2}+\frac
{1}{2\lambda_{1}m}\Vert h(t)\Vert_{L^{2}}^{2},
\]
where $m$ is the constant from (A6). Hence, from (A6), (\ref{Diss}) and
(\ref{7}) it follows that%
\[
\frac{1}{2}\frac{d}{dt}\Vert u_{n}(t)\Vert_{L^{2}}^{2}+\frac{m}{2}\Vert
u_{n}(t)\Vert_{H_{0}^{1}}^{2}\leq\lambda\gamma|\Omega|+\beta\lambda\Vert
u_{n}(t)\Vert_{L^{2}}^{2}+\frac{1}{2\lambda_{1}m}\Vert h(t)\Vert_{L^{2}}^{2}.
\]
We infer that%
\begin{align}
\Vert u_{n}(t)\Vert_{L^{2}}^{2}  & \leq\left\Vert u_{n}\left(  0\right)
\right\Vert _{L^{2}}^{2}e^{2\beta\lambda t}+\int_{0}^{t}e^{2\beta
\lambda\left(  t-s\right)  }\left(  2\lambda\gamma|\Omega|+\frac{1}%
{\lambda_{1}m}\Vert h(s)\Vert_{L^{2}}^{2}\right)  ds\label{9}\\
& \leq\left\Vert u_{n}\left(  0\right)  \right\Vert _{L^{2}}^{2}%
e^{2\beta\lambda T}+K_{1}\left(  T\right)  .\nonumber
\end{align}
Therefore, the solution exists on any given interval $[0,T]$ and
\begin{equation}
\{u_{n}\}\text{ is bounded in }L^{\infty}(0,T;L^{2}(\Omega
)).\label{firstbounded}%
\end{equation}

Now, we multiply the equation (\ref{problem1}) by $\dfrac{du_{n}}{dt}$ to
obtain
\[
\Vert\frac{du_{n}}{dt}(t)\Vert_{L^{2}}^{2}+a(\Vert u_{n}\Vert_{H_{0}^{1}}%
^{2})\frac{1}{2}\frac{d}{dt}\Vert u_{n}\Vert_{H_{0}^{1}}^{2}=\frac{d}{dt}%
\int_{\Omega}\lambda\mathcal{F}(u_{n})dx+(h(t),\frac{du_{n}}{dt}).
\]
Introducing
\begin{equation}
A(s)=\int_{0}^{s}a(r)dr\label{funcionA}%
\end{equation}
we have
\[
\frac{1}{2}\Vert\frac{du_{n}}{dt}(t)\Vert_{L^{2}}^{2}+\frac{d}{dt}\left(
\frac{1}{2}A(\Vert u_{n}\Vert_{H_{0}^{1}}^{2})-\int_{\Omega}\lambda
\mathcal{F}(u_{n})dx\right)  \leq\frac{1}{2}\Vert h(t)\Vert_{L^{2}}^{2}.
\]
Integrating the previous expression between $0$ and $t$ we get
\begin{equation}%
\begin{split}
&  \frac{1}{2}A(\Vert u_{n}(t)\Vert_{H_{0}^{1}}^{2})+\lambda\int_{\Omega
}\mathcal{F}(u_{n}(0))dx+\frac{1}{2}\int_{0}^{t}\Vert\frac{d}{ds}u_{n}%
(s)\Vert_{L^{2}}^{2}ds\\
&  \leq\frac{1}{2}A(\Vert u_{n}(0)\Vert_{H_{0}^{1}}^{2})+\lambda\int_{\Omega
}\mathcal{F}(u_{n}(t))dx+\frac{1}{2}\int_{0}^{t}\Vert h(s)\Vert_{L^{2}}^{2}ds.
\end{split}
\label{13bound}%
\end{equation}
By (A6), (\ref{Diss2}) and (\ref{9}) it follows that
\begin{equation}%
\begin{split}
&  \frac{m}{2}\Vert u_{n}(t)\Vert_{H_{0}^{1}}^{2}+\lambda\int_{\Omega
}\mathcal{F}(u_{n}(0))dx+\frac{1}{2}\int_{0}^{t}\Vert\frac{d}{ds}u_{n}%
(s)\Vert_{L^{2}}^{2}ds\\
&  \leq\frac{1}{2}A(\Vert u_{n}(0)\Vert_{H_{0}^{1}}^{2})+\lambda
\widetilde{\beta}\Vert u_{n}(t)\Vert_{L^{2}}^{2}+\lambda\widetilde{\gamma
}|\Omega|+K_{2}(T)\\
&  \leq\frac{1}{2}A(\Vert u_{n}(0)\Vert_{H_{0}^{1}}^{2})+\lambda
\widetilde{\beta}e^{2\beta\lambda T}\Vert u_{n}(0)\Vert_{L^{2}}^{2}+K_{3}(T).
\end{split}
\label{bound14}%
\end{equation}
Since $dim(\Omega)=1$, $H_{0}^{1}(\Omega)\subset L^{\infty}(\Omega),$ so
$u_{n}\left(  0\right)  $ is bounded in $L^{\infty}(\Omega)$. Thus, as $f$
maps bounded sets of $\mathbb{R}$ into bounded ones, $\mathcal{F}\left(
u_{n}\left(  0\right)  \right)  $ is bounded in $L^{\infty}(\Omega)$ as well.
Therefore, we deduce that
\[
\{u_{n}\}\text{ is bounded in }L^{\infty}(0,T;H_{0}^{1}(\Omega))
\]
and
\begin{equation}
\frac{du_{n}}{dt}\text{ is bounded in }L^{2}(0,T;L^{2}(\Omega
)).\label{dudtbounded}%
\end{equation}
Using again the embedding $H_{0}^{1}(\Omega)\subset L^{\infty}(\Omega)$ we
obtain that $u_{n}$ is bounded in the space $L^{\infty}(0,T;L^{\infty}%
(\Omega))$. Thus,
\begin{equation}
f(u_{n})\text{ is bounded in }L^{\infty}(0,T;L^{\infty}(\Omega
)).\label{boundedf}%
\end{equation}
Also, we deduce that $\Vert u_{n}(t)\Vert_{H_{0}^{1}}^{2}$ is uniformly
bounded in $[0,T]$ and then by the continuity of the function $a\left(
\text{\textperiodcentered}\right)  $ we get that the sequence $a\left(
\left\Vert u_{n}\left(  t\right)  \right\Vert _{H_{0}^{1}}^{2}\right)  $ is
also uniformly bounded in $[0,T]$.

Finally, multiplying (\ref{1.6}) by $\lambda_{j}\gamma_{ni}(t)$ and summing
from $i=1$ to $n$ we obtain
\[
\frac{1}{2}\frac{d}{dt}\Vert u_{n}\Vert_{H_{0}^{1}}^{2}+m\Vert\Delta
u_{n}\Vert_{L^{2}}^{2}\leq\lambda(f(u_{n}),-\Delta u_{n})+(h(t),-\Delta u).
\]
By (\ref{boundedf}) and applying the Young inequality, we get
\[
\frac{1}{2}\frac{d}{dt}\Vert u_{n}\Vert_{H_{0}^{1}}^{2}+m\Vert\Delta
u_{n}\Vert_{L^{2}}^{2}\leq\frac{\lambda^{2}}{m}\Vert f(u_{n})\Vert_{L^{2}}%
^{2}+\frac{m}{4}\Vert\Delta u_{n}\Vert_{L^{2}}^{2}+\frac{1}{m}\Vert
h(t)\Vert_{L^{2}}^{2}+\frac{m}{4}\Vert\Delta u\Vert_{L^{2}}^{2}.
\]
Integrating the previous expression between $0$ and $t$, it follows that
\[
\Vert u_{n}(t)\Vert_{H_{0}^{1}}^{2}+m\int_{0}^{t}\Vert\Delta u_{n}%
(s)\Vert_{L^{2}}^{2}ds\leq\Vert u_{n}(0)\Vert_{H_{0}^{1}}^{2}+\frac
{2\lambda^{2}}{m}\int_{0}^{t}\Vert f(u_{n}(s))\Vert_{L^{2}}^{2}ds+\frac{2}%
{m}\int_{0}^{t}\Vert h(s)\Vert_{L^{2}}^{2}ds.
\]
Taking into account (\ref{boundedf}), the last inequality implies that
\begin{equation}
u_{n}\text{ is bounded in }L^{2}(0,T;D(A)),\label{Aubounded}%
\end{equation}
so $\{-\Delta u_{n}\}$ and $\{a(\Vert u_{n}\Vert_{H_{0}^{1}}^{2})\Delta
u_{n}\}$ are bounded in $L^{2}(0,T;L^{2}(\Omega))$.

As a consequence, there exists $u\in L^{\infty}(0,T;H_{0}^{1}(\Omega))$ and a
subsequence $u_{n}$ (relabeled the same) such that
\begin{equation}%
\begin{split}
u_{n}  &  \overset{\ast}{\rightharpoonup}u\text{ in }L^{\infty}(0,T;H_{0}%
^{1}(\Omega)),\\
u_{n}  &  \rightharpoonup u\text{ in }L^{2}(0,T;D(A)),\\
f(u_{n})  &  \overset{\ast}{\rightharpoonup}\chi\text{ in }L^{\infty
}(0,T;L^{\infty}(\Omega)),\\
a(\Vert u_{n}\Vert_{H_{0}^{1}}^{2})  &  \overset{\ast}{\rightharpoonup}b\text{
in }L^{\infty}(0,T),
\end{split}
\label{convergences}%
\end{equation}
where $\rightharpoonup$ ($\overset{\ast}{\rightharpoonup}$) stands for the
weak (weak star) convergence. By (\ref{dudtbounded}) and (\ref{Aubounded}) the
Aubin-Lions Compactness Lemma gives that $u_{n}\rightarrow u$ in
$L^{2}(0,T;H_{0}^{1}(\Omega))$, so $u_{n}(t)\rightarrow u(t)$ in $H_{0}%
^{1}(\Omega)$ a.e. on $(0,T).$ Consequently, there exists a subsequence
$u_{n}$, relabelled the same, such that $u_{n}(t,x)\rightarrow u(t,x)$ a.e. in
$\Omega\times(0,T).$

Moreover, thanks to the inequality
\[
\left\Vert u_{n}(t_{2})-u_{n}(t_{1})\right\Vert _{L^{2}}^{2}=\left\Vert
\int_{t_{1}}^{t_{2}}\frac{d}{dt}u_{n}(s)ds\right\Vert _{L^{2}}^{2}\leq
\Vert\frac{d}{dt}u_{n}\Vert_{L^{2}(0,T;L^{2}(\Omega))}^{2}\ |t_{2}-t_{1}%
|\quad\forall t_{1},t_{2}\in\lbrack0,T],
\]
(\ref{bound14}), (\ref{dudtbounded}) and $H_{0}^{1}(\Omega)\subset\subset
L^{2}(\Omega)$, the Ascoli-Arzel\`{a} theorem implies that $\{u_{n}\}$
converges strongly in $C([0,T];L^{2}(\Omega))$ for all $T>0$. Therefore, we
obtain from (\ref{bound14}) that $u_{n}(t)\rightharpoonup u(t)\text{ in }%
H_{0}^{1}(\Omega)$, for any $t\geq0$.

Also, by (\ref{convergences}) we have that $P_{n}f(u_{n}))\rightharpoonup\chi$
in $L^{q}(0,T;L^{q}(\Omega))$ for any $q\geq1$ (see \cite[p.224]{robinson}).
Since $f$ is continuous, it follows that $f(u_{n}(t,x))\rightarrow f(u(t,x))$
a.e. in $\Omega\times(0,T).$ Therefore, in view of (\ref{convergences}), by
\cite[Lemma 1.3]{lions} we have that $\chi=f(u)$.

As a consequence, by the continuity of $a$ we get that
\[
a(\Vert u_{n}(t)\Vert_{H_{0}^{1}}^{2})\rightarrow a(\Vert u(t)\Vert_{H_{0}%
^{1}}^{2})\ \ \text{ a.e. on }(0,T).
\]
Since the sequence is uniformly bounded, by Lebesgue's theorem this
convergence takes place in $L^{2}(0,T),$ so $b=a(\Vert u\Vert_{H_{0}^{1}}%
^{2})$. Thus,
\[
a(\Vert u_{n}\Vert_{H_{0}^{1}}^{2})\Delta u_{n}\rightharpoonup a(\Vert
u\Vert_{H_{0}^{1}}^{2})\Delta u,\ \ \ \text{ in }\ \ \ L^{2}(0,T;L^{2}%
(\Omega)).
\]

Therefore, we can pass to the limit to conclude that $u$ is a strong solution.

It remains to show that $u(0)=u_{0}$ which makes sense since $u\in
C([0,T];H_{0}^{1}(\Omega))$ (see Remark 4). Indeed, let be $\phi\in
C^{1}([0,T];H_{0}^{1}(\Omega))$ with $\phi(T)=0,\ \phi(0)\not =0$. We multiply
the equation in (\ref{problem1}) and (\ref{1.6}) by $\phi$ and integrate by
parts in the $t$ variable to obtain that%
\begin{align}
& \int_{0}^{T}\left(  -(u(t),\phi^{\prime}(t))-a(\Vert u(t)\Vert_{H_{0}^{1}%
}^{2})(\Delta u(t),\phi(t))\right)  dt\label{In1}\\
& =\int_{0}^{T}(\lambda f(u(t))+h(t),\phi(t))dt+(u(0),\phi(0)),\nonumber
\end{align}%
\begin{align}
& \int_{0}^{T}\left(  -(u_{n}(t),\phi^{\prime}(t))-a(\Vert u_{n}%
(t)\Vert_{H_{0}^{1}}^{2})(\Delta u_{n}(t),\phi(t))\right)  dt\label{In2}\\
& =\int_{0}^{T}(\lambda f(u_{n}(t))+h(t),\phi(t))dt+(u_{n}(0),\phi
(0)).\nonumber
\end{align}
In view of the previous convergences, we can pass to the limit in (\ref{In2}).
Taking into account (\ref{In1}) and bearing in mind $u_{n}(0)=P_{n}%
u_{0}\rightarrow u_{0}$, since $\phi\left(  0\right)  \in H_{0}^{1}(\Omega)$
is arbitrary, we infer that $u(0)=u_{0}$.
\end{proof}

\section{Existence and structure of attractors}

In this section, we will prove the existence of a global attractor for the
semiflow generated by strong solutions in the autonomous case. Thus, the
function $h$ will be an independent of time function satisfying (A10) instead
of (A9). Also, we will establish that the attractor is equal to the unstable
set of the stationary points (see the definition in (\ref{UnstableSet})).

Throughout this section, for a metric space $X$ with metric $d$ we will denote
by $dist_{X}\left(  C,D\right)  $ the \ Hausdorff semidistance from $C$ to
$D$, that is,
\[
dist_{X}(C,D)=\sup_{c\in C}\inf_{d\in D}\rho\left(  c,d\right)  .
\]

Let us consider the phase space $X=H_{0}^{1}\left(  \Omega\right)  $ and the
sets
\[
K\left(  u_{0}\right)  =\{u(\cdot):u\text{ is a strong solution of
(\ref{problem1}) such that }u\left(  0\right)  =u_{0}\},
\]%
\[
\mathcal{R=\cup}_{u_{0}\in X}K\left(  u_{0}\right)  .
\]
Denote by $P(X)$ the class of nonempty subsets of $X$. We define the (possibly
multivalued) map $G:\mathbb{R}^{+}\times X\rightarrow P(X)$ by
\begin{equation}
G(t,u_{0})=\{u(t):u\in\mathcal{R}\text{ and }u(0)=u_{0}\}. \label{defG}%
\end{equation}

In order to study the map $G$ let us consider the following axiomatic
properties of the set $\mathcal{R}$:

\begin{itemize}
\item[(K1)] For every $x\in X$ there is $\phi\in\mathcal{R}$ satisfying
$\phi(0)=x$.

\item[(K2)] $\phi_{\tau}(\cdot):=\phi(\cdot+\tau)\in\mathcal{R}$ for every
$\tau\geq0$ and $\phi\in\mathcal{R}$ (translation property).

\item[(K3)] Let $\phi_{1},\phi_{2}\in\mathcal{R}$ be such that $\phi
_{2}(0)=\phi_{1}(s)$ for some $s>0$. Then, the function $\phi$ defined by
\[
\phi(t)=\left\{
\begin{array}
[c]{l}%
\phi_{1}(t)\quad0\leq t\leq s,\\
\phi_{2}(t-s)\quad s\leq t,
\end{array}
\right.
\]
belongs to $\mathcal{R}$ (concatenation property).

\item[(K4)] For every sequence $\{\phi^{n}\}\subset\mathcal{R}$ satisfying
$\phi^{n}(0)\rightarrow x_{0}$ in $X$, there is a subsequence $\{\phi^{n_{k}%
}\}$ and $\phi\in\mathcal{R}$ such that $\phi^{n_{k}}(t)\rightarrow\phi(t)$
for every $t\geq0$.
\end{itemize}

Assuming conditions (A1), (A6), (A10) and (\ref{Diss}) property (K1) follows
from Theorem \ref{existence}, whereas (K2)-(K3) can be proved easily using
equality (\ref{EquationRegular}). By \cite[Proposition 2]{caraballorubio} or
\cite[Lemma 9]{kapustyanpankov} we know that $\mathcal{R}$ fulfilling (K1) and
(K2) gives rise to a multivalued semiflow $G$ through (\ref{defG}) (m-semiflow
for short), which means that:

\begin{itemize}
\item $G(0,x)=x$ for all $x\in X;$

\item $G(t+s,x)\subset G(t,G(s,x))$ for all $t,s\geq0$ and $x\in X$.
\end{itemize}

Moreover, (K3) implies that the m-semiflow is strict, that is,
$G(t+s,x)=G(t,G(s,x))$ for all $t,s\geq0$ and $x\in X$.

We will show first that the m-semiflow $G$ possesses a bounded absorbing set
in the space $L^{2}\left(  \Omega\right)  $ and that property (K4) is satisfied.

\begin{lemma}
\label{lemma4} Assume conditions (A1), (A6), (A10) and (\ref{Diss}). Given
$\{u^{n}\}\subset\mathcal{R},$ $u^{n}(0)\rightarrow u_{0}$ weakly in
$H_{0}^{1}(\Omega)$, there exists a subsequence of $\{u^{n}\}$ (relabeled the
same) and $u\in K(u_{0})$ such that
\[
u^{n}(t)\rightarrow u(t)\mathit{\ }\text{\textit{ in }}H_{0}^{1}%
(\Omega),\ \forall t>0.
\]
Also, if $u^{n}(0)\rightarrow u_{0}$ strongly in $H_{0}^{1}(\Omega)$, then for
$t_{n}\rightarrow0$ we get $u^{n}(t_{n})\rightarrow u_{0}$ strongly in
$H_{0}^{1}(\Omega)$.
\end{lemma}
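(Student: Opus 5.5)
The proof repeats the a priori estimates of Theorem \ref{existence}, now for the strong solutions $u^{n}$ themselves, and then upgrades weak to strong convergence in $H_{0}^{1}(\Omega)$ through an energy-type argument.

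\textbf{Step 1: uniform bounds.} Since $u^{n}(0)\rightharpoonup u_{0}$ weakly in $H_{0}^{1}(\Omega)$, the initial data are bounded in $H_{0}^{1}(\Omega)$, hence in $L^{\infty}(\Omega)$. The computations giving (\ref{9}), (\ref{bound14}), (\ref{dudtbounded}), (\ref{boundedf}) and (\ref{Aubounded}) can be justified directly for strong solutions, using (\ref{EquationRegular}) with $\xi=u$, $du/dt$, $-\Delta u$. They show that on every $[0,T]$:
\begin{itemize}
\item $u^{n}$ is bounded in $L^{\infty}(0,T;H_{0}^{1})\cap L^{2}(0,T;D(A))$;
\item $du^{n}/dt$ is bounded in $L^{2}(0,T;L^{2})$;
\item $f(u^{n})$ is bounded in $L^{\infty}((0,T)\times\Omega)$.
\end{itemize}

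\textbf{Step 2: passage to the limit.} Exactly as in the proof of Theorem \ref{existence}, we use Aubin--Lions, Ascoli--Arzel\`a in $C([0,T];L^{2})$ and a diagonal argument over $T\in\mathbb{N}$. This gives a subsequence and a strong solution $u$ with:
\begin{itemize}
\item $u^{n}\to u$ in $L^{2}(0,T;H_{0}^{1})$ and in $C([0,T];L^{2})$;
\item $u^{n}(t)\rightharpoonup u(t)$ weakly in $H_{0}^{1}$ for all $t\ge0$;
\item $u^{n}(t)\to u(t)$ strongly in $H_{0}^{1}$ for a.e. $t$.
\end{itemize}
The identification of the limits of $f(u^{n})$ and $a(\Vert u^{n}\Vert_{H_{0}^{1}}^{2})$ is done as there, and $u(0)=u_{0}$ follows from the weak convergence of the initial data. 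Hence $u\in K(u_{0})$.

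\textbf{Step 3: strong convergence for every $t>0$ (main obstacle).} Here I would not use the Lyapunov functional, because $\int_\Omega\mathcal F(u^n(0))$ need not converge when the data converge only weakly. Instead I would use the $H_{0}^{1}$ energy equality. For a strong solution, $\frac12\frac{d}{dt}\Vert u\Vert_{H_0^1}^2=(u_t,-\Delta u)$. Substituting the equation,
\[
\frac12\frac{d}{dt}\Vert u\Vert_{H_0^1}^2+a(\Vert u\Vert_{H_0^1}^2)\Vert\Delta u\Vert_{L^2}^2=(\lambda f(u)+h,-\Delta u).
\]
Set $J_{n}(t)=\Vert u^{n}(t)\Vert_{H_{0}^{1}}^{2}-C\,t$, and choose $C$ larger than the uniform bound of $2\Vert\lambda f(u^{n})+h\Vert^{2}_{L^{2}}/(4m)$ coming from Young's inequality. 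Then each $J_n$ is non-increasing, and so is $J(t)=\Vert u(t)\Vert_{H_0^1}^2-Ct$.

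Fix $t>0$ and take $s<t$ with $s$ arbitrarily close to $t$ and $u^{n}(s)\to u(s)$ strongly in $H_{0}^{1}$. Then
\[
\limsup_n J_n(t)\le\lim_n J_n(s)=J(s).
\]
Since $u\in C([0,T];H_{0}^{1})$, letting $s\uparrow t$ gives $J(s)\to J(t)$. Therefore $\limsup_n\Vert u^n(t)\Vert_{H_0^1}^2\le\Vert u(t)\Vert_{H_0^1}^2$. Together with weak convergence, this yields strong convergence of $u^{n}(t)$ in $H_{0}^{1}$.

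\textbf{Step 4: the case $t_{n}\to0$.} The same monotonicity gives $\Vert u^{n}(t_{n})\Vert_{H_0^1}^{2}\le\Vert u^{n}(0)\Vert_{H_0^1}^{2}+Ct_{n}\to\Vert u_{0}\Vert_{H_0^1}^{2}$, using the strong convergence of the data. We also need $u^{n}(t_{n})\rightharpoonup u_{0}$ weakly. This holds because $u^{n}(t_n)$ is bounded in $H_0^1$, and
\[
\Vert u^{n}(t_{n})-u^{n}(0)\Vert_{L^{2}}\le t_{n}^{1/2}\Vert u^{n}_t\Vert_{L^{2}(0,T;L^{2})}\to0 .
\]
Weak convergence plus the norm bound gives $u^{n}(t_{n})\to u_{0}$ strongly in $H_{0}^{1}$.
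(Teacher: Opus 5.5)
Your proposal is correct and follows the paper's proof: uniform bounds and compactness as in Theorem~\ref{existence}, then a functional that is non-increasing up to a linear term in $t$, combined with strong convergence at almost every time and weak convergence at every time to get $\limsup_n\Vert u^n(t)\Vert_{H_0^1}^2\le\Vert u(t)\Vert_{H_0^1}^2$, and the same monotonicity from $t=0$ for the case $t_n\to0$. The only difference is that you test with $-\Delta u$, so that $\Vert u^n(t)\Vert_{H_0^1}^2-Ct$ is non-increasing, whereas the paper tests with $u^n_t$ and uses $Q_n(t)=A(\Vert u^n(t)\Vert_{H_0^1}^2)-2Ct$; both absorb the nonlinearity via the uniform $L^\infty$ bound on $f(u^n)$, so neither needs $\int_\Omega\mathcal F(u^n(0))$ to converge (it would converge anyway, since weak convergence in $H_0^1(0,1)$ implies uniform convergence, contrary to your remark in Step~3).
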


\begin{proof}
Since $\dfrac{du^{n}}{dt}\in L^{2}(0,T;L^{2}(\Omega))$ and $u^{n}\in
L^{2}(0,T;H_{0}^{1}(\Omega))$, we have by \cite[pg. 102]{sellyou} that
\begin{equation}
\dfrac{d}{dt}\Vert u^{n}\Vert_{H_{0}^{1}}^{2}=2(-\Delta u^{n},u_{t}%
^{n})\ \text{ for a.a. }t \label{equality}%
\end{equation}
and $u^{n}\in C([0,T];H_{0}^{1}(\Omega))$. Also, as $f(u^{n})\in L^{\infty
}(0,T;L^{\infty}(\Omega))$, by regularization one can show that $(F(u^{n}%
(t)),1)$ is an absolutely continuous function on $[0,T]$ and
\begin{equation}
\dfrac{d}{dt}(F(u^{n}(t)),1)=(f(u^{n}(t)),\dfrac{du^{n}}{dt})\ \text{ for a.a.
}t>0. \label{FDeriv}%
\end{equation}

By a similar argument as in Theorem \ref{existence}, there is a subsequence of
$u^{n}$ such that
\begin{equation}%
\begin{split}
u^{n} &  \text{ is bounded in }L^{\infty}(0,T;L^{\infty}(\Omega)),\\
u^{n} &  \text{ is bounded in }L^{\infty}(0,T;H_{0}^{1}(\Omega)),\\
f(u^{n}) &  \text{ is bounded in }L^{\infty}(0,T;L^{\infty}(\Omega)),\\
u^{n} &  \text{ is bounded in }L^{2}(0,T;D(A)).
\end{split}
\label{35}%
\end{equation}

Therefore, arguing as in the proof of Theorem \ref{existence}, there exists
$u\in K\left(  u_{0}\right)  $ and a subsequence $u^{n}$, relabelled the same,
such that
\begin{equation}%
\begin{split}
u^{n}  &  \overset{\ast}{\rightharpoonup}u\text{ in }L^{\infty}(0,T;H_{0}%
^{1}(\Omega))\\
u_{n}  &  \rightharpoonup u\text{ in }L^{2}(0,T;D(A))\\
f(u^{n})  &  \overset{\ast}{\rightharpoonup}f(u)\text{ in }L^{\infty
}(0,T;L^{\infty}(\Omega))\\
\dfrac{du^{n}}{dt}  &  \rightharpoonup\dfrac{du}{dt}\text{ in }L^{2}%
(0,T;L^{2}(\Omega))\\
a(\Vert u^{n}\Vert_{H_{0}^{1}}^{2})\Delta u^{n}  &  \rightharpoonup a(\Vert
u\Vert_{H_{0}^{1}}^{2})\Delta u\text{ in }L^{2}(0,T;L^{2}(\Omega)),\\
u^{n}  &  \rightarrow u\text{ in }L^{2}(0,T;H_{0}^{1}(\Omega)),\\
u^{n}  &  \rightarrow u\text{ in }C([0,T],L^{2}(\Omega)),\\
u^{n}(t)  &  \rightharpoonup u(t)\text{ in }H_{0}^{1}(\Omega)\quad\forall
t\in(0,T].
\end{split}
\label{convergences1}%
\end{equation}

We also need to prove that $u^{n}(t)\rightarrow u(t)$ in $H_{0}^{1}(\Omega)$
for all $t\in(0,T]$. For this end, we multiply (\ref{problem1}) by $u_{t}^{n}$
and using (A10), (\ref{equality}) and (\ref{35}) we have
\[
\frac{1}{2}\left\Vert \frac{du^{n}}{dt}\right\Vert _{L^{2}}^{2}+\frac{d}%
{dt}\left(  \frac{1}{2}A(\Vert u^{n}(t)\Vert_{H_{0}^{1}}^{2})\right)  \leq C.
\]
Thus, we obtain
\[
A(\Vert u^{n}(t)\Vert_{H_{0}^{1}}^{2})\leq A(\Vert u^{n}(s)\Vert_{H_{0}^{1}%
}^{2})+2C(t-s),\quad t\geq s\geq0.
\]
Since this inequality is also true for $u(\cdot)$, the functions
$Q_{n}(t)=A(\Vert u^{n}(t)\Vert_{H_{0}^{1}}^{2})-2Ct,$ $Q(t)=A(\Vert
u(t)\Vert_{H_{0}^{1}}^{2})-2Ct$ are continuous and non-increasing in $[0,T]$.
Moreover, from (\ref{convergences1}) we deduce that
\[
Q_{n}(t)\rightarrow Q(t)\quad\text{ for a.e. }t\in(0,T).
\]
Take $0<t\leq T$ $\ $and $0<t_{j}<t$ such that $t_{j}\rightarrow t$ and
$Q_{n}(t_{j})\rightarrow Q(t_{j})$ for all $j$. Then
\[
Q_{n}(t)-Q(t)\leq Q_{n}(t_{j})-Q(t)\leq|Q_{n}(t_{j})-Q(t_{j})|+|Q(t_{j}%
)-Q(t)|.
\]
For any $\delta>0$ there exist $j(\delta)$ and $N(j(\delta))$ such that
$Q_{n}(t)-Q(t)\leq\delta$ if $n\geq N$. Then $\limsup Q_{n}(t)\leq Q(t),$ so
$\limsup\Vert u^{n}(t)\Vert_{H_{0}^{1}}^{2}\leq\Vert u(t)\Vert_{H_{0}^{1}}%
^{2}$, which follows by contradiction using the continuity of the function
$A(s)$. As $u^{n}(t)\rightarrow u(t)$ weakly in $H_{0}^{1}(\Omega)$ implies
that $\liminf\Vert u^{n}(t)\Vert_{H_{0}^{1}}^{2}\geq\Vert u(t)\Vert_{H_{0}%
^{1}}^{2}$, we obtain
\[
\Vert u^{n}(t)\Vert_{H_{0}^{1}}^{2}\rightarrow\Vert u(t)\Vert_{H_{0}^{1}}^{2},
\]
so that $u^{n}(t)\rightarrow u(t)$ strongly in $H_{0}^{1}(\Omega)$.

Finally, if $u^{n}(0)\rightarrow u_{0}$ strongly in $H_{0}^{1}(\Omega)$ and we
take $t_{n}\rightarrow0$, then%
\[
Q_{n}(t_{n})-Q(0)\leq Q_{n}(0)-Q(0)=A(\Vert u^{n}\left(  0\right)
\Vert_{H_{0}^{1}}^{2})-A(\Vert u_{0}\Vert_{H_{0}^{1}}^{2})\rightarrow0,
\]
so $\limsup Q_{n}(t_{n})\leq Q(0)$. Repeating the above argument, we infer
that $u^{n}(t_{n})\rightarrow u_{0}$ strongly in $H_{0}^{1}(\Omega)$.
\end{proof}

\begin{corollary}
\label{PropK4} Assume the conditions of Lemma \ref{lemma4}. Then the set
$\mathcal{R}$ satisfies condition $(K4)$.
\end{corollary}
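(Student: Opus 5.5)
The corollary should follow almost immediately from Lemma \ref{lemma4}, and the plan is to reduce (K4) to that lemma by splitting into the two regimes $t>0$ and $t=0$. Take a sequence $\{\phi^{n}\}\subset\mathcal{R}$ with $\phi^{n}(0)\rightarrow x_{0}$ strongly in $X=H_{0}^{1}(\Omega)$. Strong convergence implies weak convergence in $H_{0}^{1}(\Omega)$, so the first part of Lemma \ref{lemma4} applies. It yields a subsequence $\{\phi^{n_{k}}\}$ and some $\phi\in K(x_{0})\subset\mathcal{R}$ such that $\phi^{n_{k}}(t)\rightarrow\phi(t)$ in $H_{0}^{1}(\Omega)$ for every $t>0$.

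It remains to handle $t=0$. Since $\phi\in K(x_{0})$, we have $\phi(0)=x_{0}$, and by assumption $\phi^{n_{k}}(0)\rightarrow x_{0}=\phi(0)$. Alternatively, one can invoke the second part of Lemma \ref{lemma4} with $t_{n}\equiv0$. Either way, convergence holds for every $t\geq0$, which is exactly (K4).

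The only point that needs care is the time horizon. Lemma \ref{lemma4} is proved on an arbitrary but fixed interval $[0,T]$, and the subsequence could a priori depend on $T$. To get one subsequence that works for all $t\geq0$, I would use a diagonal argument. Take $T=1,2,3,\dots$ and extract successive subsequences, each a subsequence of the previous one. Each step produces a solution on $[0,T]$. These limits agree on overlapping intervals, because the limit on $[0,T+1]$ restricted to $[0,T]$ is the pointwise limit of a further subsequence of the sequence that already converges on $[0,T]$. They therefore define a single $\phi\in\mathcal{R}$ on $[0,\infty)$; it is a strong solution on every $[0,T]$ because (\ref{EquationRegular}) is local in time. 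The diagonal subsequence then converges for every $t\geq0$.

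I expect this consistency-of-limits step to be the only mild obstacle, and it is routine.
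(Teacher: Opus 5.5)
Your proof is correct and follows essentially the paper's approach: the paper gives no separate proof, treating the corollary as immediate from Lemma \ref{lemma4} (strong convergence of $\phi^{n}(0)$ implies weak convergence, the lemma gives convergence for every $t>0$, and $t=0$ is trivial because $\phi(0)=x_{0}$). Your diagonal argument over $T=1,2,\dots$ is sound, but given the lemma as stated it is not needed, since the lemma already asserts convergence for all $t>0$; it only spells out a step that the paper's proof of the lemma, carried out on a fixed $[0,T]$, leaves implicit.
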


The map $t\mapsto G(t,x)$ is said to be upper semicontinuous if for every
$x\in X$ and for an arbitrary neighborhood $O(G(t,x))$ in $X$ there is
$\delta>0$ such that as soon as $d(y,x)<\delta$, we have $G(t,y)\subset O$.

\begin{proposition}
\label{uppersemicontinuity} Assume the conditions of Lemma \ref{lemma4}. The
multivalued semiflow $G$ is upper semicontinuous for all $t\geq0$. Also, it
has compact values.
\end{proposition}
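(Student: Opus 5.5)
Both claims will be derived from Lemma \ref{lemma4}, by the standard argument for multivalued semiflows whose trajectory set satisfies (K1)--(K4).

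\emph{Compact values.} Fix $t\geq0$ and $x\in X$. If $t=0$, then $G(0,x)=\{x\}$ and there is nothing to prove. For $t>0$, let $\{y_n\}\subset G(t,x)$. Then $y_n=u^n(t)$ for some $u^n\in\mathcal{R}$ with $u^n(0)=x$. The constant sequence $u^n(0)=x$ converges strongly, and hence weakly, in $H_0^1(\Omega)$. Lemma \ref{lemma4} therefore gives a subsequence and some $u\in K(x)$ with $u^n(t)\to u(t)$ strongly in $H_0^1(\Omega)$. Since $u(t)\in G(t,x)$, the set $G(t,x)$ is sequentially compact, hence compact. The same argument shows that $G(t,x)$ is closed.

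\emph{Upper semicontinuity.} I argue by contradiction. Suppose that for some $t\geq0$, $x\in X$ and some neighborhood $O$ of $G(t,x)$ there are $x_n\to x$ in $H_0^1(\Omega)$ and $y_n\in G(t,x_n)\setminus O$. Write $y_n=u^n(t)$ with $u^n\in\mathcal{R}$ and $u^n(0)=x_n$.

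\begin{itemize}
\item If $t>0$, strong convergence $x_n\to x$ implies weak convergence. Lemma \ref{lemma4} then yields $u\in K(x)$ and a subsequence with $y_n=u^n(t)\to u(t)\in G(t,x)\subset O$. Because $O$ is open, $y_n\in O$ for all large $n$, which is a contradiction.
\item If $t=0$, then $y_n=x_n\to x$ and $G(0,x)=\{x\}\subset O$, so again $y_n\in O$ for large $n$, a contradiction. Alternatively, the second part of Lemma \ref{lemma4} with $t_n\equiv0$ gives the same conclusion.
\end{itemize}

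Hence for each neighborhood $O$ there is $\delta>0$ such that $d(y,x)<\delta$ implies $G(t,y)\subset O$, which is the required upper semicontinuity.

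The only delicate point is the strong $H_0^1$ convergence of $u^n(t)$ rather than merely weak convergence. Weak convergence would not suffice, since $O$ is a norm-open set. This strong convergence is exactly the energy argument with the monotone functions $Q_n$ already carried out in Lemma \ref{lemma4}, so no new estimate is needed.
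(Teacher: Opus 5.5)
Your proposal is correct and follows essentially the same route as the paper: you argue by contradiction, and Lemma \ref{lemma4} supplies a subsequence with $u^n(t)\to u(t)\in G(t,u_0)$ strongly in $H_0^1(\Omega)$. You also spell out the compactness of values (via constant initial data) and the case $t=0$, both of which the paper leaves implicit.
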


\begin{proof}
By contradiction let us assume that there exist $t\geq0,\ u_{0}\in H_{0}%
^{1}(\Omega)$, a neighbourhood $O(G(t,u_{0}))$ and sequences $\{y_{n}%
\},\ \{u_{0}^{n}\}$ such that $y_{n}\in G(t,u_{0}^{n})$, $u_{0}^{n}$ converges
strongly to $u_{0}$ in $H_{0}^{1}(\Omega)$ and $y_{n}\notin O(G(t,u_{n}))$ for
all $n\in\mathbb{N}$. Thus, there exist $u^{n}\in K(u_{0}^{n})$ such that
$y_{n}=u^{n}(t)$. From Lemma \ref{lemma4} there exists a subsequence of
$y_{n}$ which converges to some $y\in G(t,u_{0})$. This contradicts
$y_{n}\notin O(G(t,u_{0}))$ for any $n\in\mathbb{N}$.
\end{proof}

\bigskip

In order to prove the existence of an absorbing set in the space $L^{2}\left(
\Omega\right)  $ we need to use the stronger condition (A5) instead of
(\ref{Diss}).

\begin{proposition}
\label{abosorbinginL2firstcase}Assume that conditions (A1), (A5), (A6) and
(A10) hold. Then the m-semiflow $G$ has a bounded absorbing set in
$L^{2}\left(  \Omega\right)  $, that is, there exists a constant $K>0$ such
that for any $R>0$ there is a time $t_{0}=t_{0}(R)$ such that
\begin{equation}
\Vert y\Vert_{L^{2}}\leq K\quad\text{ for all }\quad t\geq t_{0},\text{ }y\in
G(t,u_{0}), \label{9.1}%
\end{equation}
where $\left\Vert u_{0}\right\Vert _{L^{2}}\leq R.$ Moreover, there is $L>0$
such that%
\begin{equation}
\int_{t}^{t+1}\Vert u(s)\Vert_{H_{0}^{1}}^{2}ds\leq L\quad\text{ for all}\quad
t\geq t_{0}\text{, }u\in K\left(  u_{0}\right)  . \label{31}%
\end{equation}

\end{proposition}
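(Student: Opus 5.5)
We argue with the $L^{2}$ energy identity satisfied by every strong solution $u\in K(u_{0})$. Since $\frac{du}{dt}\in L^{2}(0,T;L^{2}(\Omega))$ and $u\in L^{2}(0,T;D(A))$, we may take $\xi=u$ in (\ref{EquationRegular}). The function $\|u(t)\|_{L^{2}}^{2}$ is then absolutely continuous, and
\[
\frac{1}{2}\frac{d}{dt}\Vert u\Vert_{L^{2}}^{2}+a(\Vert u\Vert_{H_{0}^{1}}^{2})\Vert u\Vert_{H_{0}^{1}}^{2}=\lambda(f(u),u)+(h,u)\quad\text{for a.e. }t>0.
\]
We bound the left-hand side from below using (A6): $a(\cdot)\geq m$. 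On the right, Young and Poincar\'{e} give $(h,u)\leq\frac{m}{4}\Vert u\Vert_{H_{0}^{1}}^{2}+\frac{1}{m\lambda_{1}}\Vert h\Vert_{L^{2}}^{2}$, and $h$ is time independent by (A10). For the reaction term we use (\ref{condf}), which holds under (A5) in both cases $p>2$ and $p=2$: $\lambda(f(u),u)\leq\lambda m_{\varepsilon}|\Omega|+\lambda\varepsilon\Vert u\Vert_{L^{2}}^{2}$. We choose $\varepsilon$ with $\lambda\varepsilon\leq\frac{m\lambda_{1}}{4}$, so that by Poincar\'{e} this term is absorbed into $\frac{m}{4}\Vert u\Vert_{H_{0}^{1}}^{2}$. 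The result is
\[
\frac{d}{dt}\Vert u\Vert_{L^{2}}^{2}+m\Vert u\Vert_{H_{0}^{1}}^{2}\leq C_{0}:=2\lambda m_{\varepsilon}|\Omega|+\frac{2}{m\lambda_{1}}\Vert h\Vert_{L^{2}}^{2}.
\]

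Next we use $\Vert u\Vert_{H_{0}^{1}}^{2}\geq\lambda_{1}\Vert u\Vert_{L^{2}}^{2}$ and apply Gronwall's lemma:
\[
\Vert u(t)\Vert_{L^{2}}^{2}\leq\Vert u_{0}\Vert_{L^{2}}^{2}e^{-m\lambda_{1}t}+\frac{C_{0}}{m\lambda_{1}}.
\]
Set $K^{2}=1+\frac{C_{0}}{m\lambda_{1}}$ and choose $t_{0}(R)$ with $R^{2}e^{-m\lambda_{1}t_{0}}\leq1$. Every $y\in G(t,u_{0})$ has the form $u(t)$ for some $u\in K(u_{0})$, so (\ref{9.1}) follows for $t\geq t_{0}$. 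The bound is uniform over all solutions with the same initial datum, so non-uniqueness causes no difficulty.

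For (\ref{31}) we integrate the differential inequality over $[t,t+1]$ with $t\geq t_{0}$:
\[
m\int_{t}^{t+1}\Vert u(s)\Vert_{H_{0}^{1}}^{2}ds\leq\Vert u(t)\Vert_{L^{2}}^{2}+C_{0}\leq K^{2}+C_{0},
\]
so $L=(K^{2}+C_{0})/m$ works.

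The only delicate point is justifying the energy equality for an arbitrary strong solution, as opposed to the Galerkin approximations. This follows from the regularity in the definition of strong solution, via the Lions--Magenes lemma (\cite[p.102]{sellyou}), because $f(u)\in L^{\infty}$ in dimension one. The remaining steps are routine.
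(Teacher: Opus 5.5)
Your proof is correct and follows the paper's argument. You multiply by $u$, use (A6), (\ref{condf}), Young and Poincar\'{e}, apply Gronwall to get (\ref{9.1}), and then integrate the differential inequality over $[t,t+1]$ to get (\ref{31}). The differences are only bookkeeping: you keep the factor $\lambda$ on the reaction term, which the paper drops; you absorb everything into the $H_0^1$ term instead of working with the rate $\delta=m\lambda_1-2\varepsilon$; and you explicitly justify the energy identity for an arbitrary strong solution.
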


\begin{proof}
Multiplying equation (\ref{problem1}) by $u$ and using (A6) and (\ref{condf})
we get
\begin{align}
\frac{1}{2}\frac{d}{dt}\Vert u(t)\Vert_{L^{2}}^{2}+m\Vert u(t)\Vert_{H_{0}%
^{1}}^{2}  & \leq(f(u),u)+(h,u)\label{9.3}\\
& \leq m_{\varepsilon}|\Omega|+\varepsilon\Vert u(t)\Vert_{L^{2}}^{2}+\frac
{1}{2\lambda_{1}m}\Vert h\Vert_{L^{2}}^{2}+\frac{\lambda_{1}m}{2}\Vert
u\Vert_{L^{2}}^{2}.\nonumber
\end{align}
Using the Poincar\'{e} inequality it follows that
\[
\frac{d}{dt}\Vert u\Vert_{L^{2}}^{2}\leq2m_{\varepsilon}|\Omega|+2(\varepsilon
-\frac{m}{2}\lambda_{1})\Vert u(t)\Vert_{L^{2}}^{2}+\frac{1}{\lambda_{1}%
m}\Vert h\Vert_{L^{2}}^{2}=-\delta\Vert u(t)\Vert_{L^{2}}^{2}+\kappa,
\]
where $\delta=m\lambda_{1}-2\varepsilon$, $\kappa=2m_{\varepsilon}%
|\Omega|+\frac{1}{\lambda_{1}m}\Vert h\Vert_{L^{2}}^{2}$. We take
$\varepsilon>0$ small enough, so $\delta>0$. Then Gronwall's lemma gives
\begin{equation}
\Vert u(t)\Vert_{L^{2}}^{2}\leq\Vert u(0)\Vert_{L^{2}}^{2}e^{-\delta t}%
+\frac{\kappa}{\delta}.\label{Inequ}%
\end{equation}
Hence, taking
\[
t\geq t_{0}=\frac{1}{\delta}\ln\left(  \frac{\delta R^{2}}{\kappa}\right)
\]
we get (\ref{9.1}) for $K=\sqrt{\frac{2\kappa}{\delta}}.$

On the other hand, using again the Poincar\'{e} inequality from (\ref{9.3}) we
get
\[
\frac{d}{dt}\Vert u(t)\Vert_{L^{2}}^{2}+\left(  \frac{m\lambda_{1}%
-2\varepsilon}{\lambda_{1}}\right)  \Vert u(t)\Vert_{H_{0}^{1}}^{2}\leq\kappa
\]
and integrating from $t$ to $t+1$ we obtain%
\[
\left(  \frac{m\lambda_{1}-2\varepsilon}{\lambda_{1}}\right)  \int_{t}%
^{t+1}\Vert u(s)\Vert_{H_{0}^{1}}^{2}ds\leq\Vert u(t)\Vert_{L^{2}}^{2}%
+\kappa.
\]
Therefore, applying (\ref{9.1}), (\ref{31}) follows.
\end{proof}

\bigskip

Further, in order to obtain an absorbing set in $H_{0}^{1}\left(
\Omega\right)  $ we need to assume additionally that either the function
$a\left(  \text{\textperiodcentered}\right)  $ is bounded above or that it is non-decreasing.

\begin{proposition}
\label{absorbingset} Assume the conditions in Proposition
\ref{abosorbinginL2firstcase} and that either (A7) or (A8) holds true. Then
there exists an absorbing set $B_{1}$ for $G$, which is compact in $H_{0}%
^{1}(\Omega)$.
\end{proposition}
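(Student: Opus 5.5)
The plan is to first obtain a bounded absorbing set in $H_{0}^{1}(\Omega)$, and then to upgrade it to a compact one by a further time-translation together with the compactness in Lemma \ref{lemma4}.

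\textbf{Step 1: bound in $H_{0}^{1}$.} Multiplying (\ref{problem1}) by $u_{t}$, as in the proof of Theorem \ref{existence}, gives
\[
\frac{1}{2}\Vert u_{t}\Vert_{L^{2}}^{2}+\frac{d}{dt}\Big(\frac{1}{2}A(\Vert u\Vert_{H_{0}^{1}}^{2})-\lambda\int_{\Omega}\mathcal{F}(u)dx-(h,u)\Big)\leq0 .
\]
The difficulty is that we cannot integrate this from $0$, since the initial datum is only bounded in $L^{2}$. We therefore use a uniform Gronwall-type argument.

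Multiplying instead by $-\Delta u$ and using (A6) gives
\[
\frac{1}{2}\frac{d}{dt}\Vert u\Vert_{H_{0}^{1}}^{2}+m\Vert\Delta u\Vert^{2}\leq\lambda(f(u),-\Delta u)+(h,-\Delta u).
\]
In dimension one the term $\lambda(f(u),-\Delta u)$ is awkward, because $f$ has no Lipschitz bound. A convenient alternative is to work with the Lyapunov-type quantity
\[
E(t)=\tfrac12A(\Vert u\Vert_{H_{0}^{1}}^{2})-\lambda\int_{\Omega}\mathcal{F}(u)-(h,u),
\]
which is non-increasing along strong solutions. This requires (\ref{FDeriv}) and (\ref{equality}).

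From (\ref{31}) we get $\int_{t}^{t+1}\Vert u\Vert_{H_{0}^{1}}^{2}\leq L$. Combined with (\ref{condFA}) and the embedding $H_{0}^{1}\subset L^{\infty}$, this should bound $\int_{t}^{t+1}E(s)ds$. Here $\int\mathcal{F}(u)$ is controlled from below by $-\widetilde C(1+\Vert u\Vert_{L^{p}}^{p})$, and $\Vert u\Vert_{L^{p}}^{p}\le \Vert u\Vert_{L^\infty}^{p-2}\Vert u\Vert_{L^2}^2$. Strictly, what is needed is an integral bound on $\Vert u\Vert_{H^1_0}^{p-2}$ times the $L^2$ bound. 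If that is not available directly, I would instead first derive $\int_t^{t+1}\Vert u\Vert_{L^p}^p\le C$ by integrating (\ref{9.3}) with (\ref{A52}) in place of (\ref{condf}) for $p>2$; for $p=2$ this is automatic.

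Monotonicity then gives $E(t+1)\leq\int_{t}^{t+1}E(s)ds\leq C$. On the other hand, (\ref{condF}) and the $L^{2}$ bound give $E\geq\frac12A(\Vert u\Vert^{2}_{H_{0}^{1}})-\varepsilon\lambda\Vert u\Vert^2_{L^2}-C$.

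Under (A6), $A(s)\geq ms$. Hence $\Vert u(t)\Vert_{H_{0}^{1}}\leq K_{1}$ for $t\geq t_{0}+1$, uniformly for initial data in bounded sets. This step holds under either (A7) or (A8): (A7) gives $A(s)\le M_1 s$, which is what is used to control $\int_t^{t+1}E$; under (A8), $A(s)\le s\,a(s)$ and $a$ is bounded on the bounded range of $\Vert u\Vert^2_{H^1_0}$. The main obstacle is precisely this upper control of $\int A(\Vert u\Vert^{2})$ when only (A8) is assumed. There I would use the local bound $A(s)\leq s\,a(s)$ along the lines of the $-\Delta u$ estimate. Alternatively, I would use the energy equality (\ref{7}) integrated in time, $\int_t^{t+1} a(\Vert u\Vert^2)\Vert u\Vert^2\,ds\le C$, and then apply Jensen/monotonicity: $A(s)\le s a(s)$ for non-decreasing $a$.

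\textbf{Step 2: compactness.} Let $B_{0}$ be the closed ball of radius $K_{1}$ in $H_{0}^{1}$; it is absorbing. Set $B_{1}=\overline{G(1,B_{0})}$. Since the semiflow is strict, $B_{1}$ is absorbing. Take any sequence $y_n\in G(1,u_0^n)$ with $u_0^n\in B_0$. Passing to a weakly convergent subsequence of $u_0^n$, Lemma \ref{lemma4} gives strong convergence of a further subsequence of $y_n$ to some element of $G(1,u_0)\subset G(1,B_0)$, noting that $B_0$ is weakly closed. Hence $G(1,B_{0})$ is relatively compact, and in fact closed, so $B_{1}$ is compact in $H_{0}^{1}(\Omega)$.
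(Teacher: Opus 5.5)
Your argument is correct and is essentially the paper's: unit-interval integral bounds on $A(\Vert u\Vert_{H_0^1}^2)$ (via $A(s)\le M_1 s$ under (A7), or via $A(s)\le s\,a(s)$ together with $\int_t^{t+1}a(\Vert u\Vert_{H_0^1}^2)\Vert u\Vert_{H_0^1}^2\,ds\le C$ under (A8)) and on $\Vert u\Vert_{L^p}^p$ (via (\ref{A52})) feed into the energy inequality. The resulting energy bound at time $t+1$ gives an absorbing ball in $H_0^1(\Omega)$ through (A6) and (\ref{condF}), and Lemma \ref{lemma4} makes $\overline{G(1,B_0)}$ compact; where the paper invokes the uniform Gronwall lemma you use exact monotonicity of the full Lyapunov functional (including $-(h,u)$), an equivalent and slightly cleaner step. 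In the (A8) case it is the integrated bound that does the work: your first remark, that $a$ is bounded on the range of $\Vert u\Vert_{H_0^1}^2$, presupposes the very uniform bound being proved.
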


\begin{proof}
In view of Proposition \ref{abosorbinginL2firstcase} we have an absorbing set
$B_{0}$ in $L^{2}(\Omega)$. Let $K>0$ be such that $\Vert y\Vert\leq K$ for
all $y\in B_{0}$.

Multiplying (\ref{problem1}) by $u$ and using (\ref{condf}) and (\ref{Inequ})
we get
\begin{align*}
\frac{d}{dt}\Vert u(t)\Vert_{L^{2}}^{2}+a\left(  \Vert u(t)\Vert_{H_{0}^{1}%
}^{2}\right)  \Vert u(t)\Vert_{H_{0}^{1}}^{2} &  \leq2m_{\varepsilon}%
|\Omega|+2\varepsilon\Vert u(t)\Vert_{L^{2}}^{2}+\frac{1}{\lambda_{1}m}\Vert
h\Vert_{L^{2}}^{2}\\
&  \leq K_{1}+K_{2}\Vert u(0)\Vert_{L^{2}}^{2}.
\end{align*}
Thus, integrating between $t$ and $t+r$, $0<r\leq1$, we deduce by using
(\ref{Inequ}) again that%
\begin{equation}%
\begin{split}
&  \Vert u(t+r)\Vert_{L^{2}}^{2}+\int_{t}^{t+r}a\left(  \Vert u(s)\Vert
_{H_{0}^{1}}^{2}\right)  \Vert u(s)\Vert_{H_{0}^{1}}^{2}ds\\
&  \leq K_{1}+K_{2}\Vert u(0)\Vert_{L^{2}}^{2}+\Vert u(t)\Vert_{L^{2}}^{2}\leq
K_{3}\Vert u(0)\Vert_{L^{2}}^{2}+K_{4}.
\end{split}
\label{EstH1}%
\end{equation}
Also, if $p>2$ in (A5), we multiply again by (\ref{problem1}) by $u$ and use
(\ref{A52}) and (A6) to obtain%
\[
\frac{1}{2}\frac{d}{dt}\Vert u(t)\Vert_{L^{2}}^{2}+\frac{m}{2}\Vert
u(t)\Vert_{H_{0}^{1}}^{2}+C_{4}\left\Vert u\left(  t\right)  \right\Vert
_{L^{p}}^{p}\leq C_{3}+\frac{1}{2\lambda_{1}m}\Vert h\Vert_{L^{2}}^{2}.
\]
Integrating over $\left(  t,t+r\right)  $ we have%
\begin{equation}
\Vert u(t+r)\Vert_{L^{2}}^{2}+2C_{4}\int_{t}^{t+r}\left\Vert u\left(
s\right)  \right\Vert _{L^{p}}^{p}ds\leq K_{5}+\Vert u(t)\Vert_{L^{2}}^{2}\leq
K_{6}+\Vert u(0)\Vert_{L^{2}}^{2}.\label{EstLp}%
\end{equation}
If we assume (A7), by (\ref{EstH1}) and (A6) we have that
\begin{equation}
\int_{t}^{t+r}A(\Vert u(s)\Vert_{H_{0}^{1}}^{2})ds\leq\int_{t}^{t+r}M_{1}\Vert
u(s)\Vert_{H_{0}^{1}}^{2}ds\leq K_{7}(1+\Vert u(0)\Vert_{L^{2}}^{2}%
).\label{EstA1}%
\end{equation}
If we assume (A8), by (\ref{EstH1}) we obtain%
\begin{align}
\int_{t}^{t+r}A(\Vert u(s)\Vert_{H_{0}^{1}}^{2})ds &  =\int_{t}^{t+r}\int%
_{0}^{\Vert u(s)\Vert_{H_{0}^{1}}^{2}}a\left(  r\right)  drds\nonumber\\
&  \leq\int_{t}^{t+r}a\left(  \Vert u(s)\Vert_{H_{0}^{1}}^{2}\right)  \Vert
u(s)\Vert_{H_{0}^{1}}^{2}ds\leq K_{3}\Vert u(0)\Vert_{L^{2}}^{2}%
+K_{4}.\label{EstA2}%
\end{align}
On the other hand, by (\ref{condFA}) we get
\begin{equation}
-\int_{\Omega}F\left(  u\left(  t\right)  \right)  dx\geq-\widetilde{C}%
\int_{\Omega}(1+|u\left(  t\right)  |^{p})dx.\label{EstF}%
\end{equation}
Using (\ref{equality}) and (\ref{FDeriv}) we can argue as in Theorem
\ref{existence} to obtain
\[
\frac{1}{2}\Vert u_{t}\Vert_{L^{2}}^{2}+\frac{d}{dt}\left(  \frac{1}{2}A(\Vert
u(t)\Vert_{H_{0}^{1}}^{2}-\int_{\Omega}\lambda\mathcal{F}(u_{n})dx\right)
\leq\frac{1}{2}\Vert h\Vert_{L^{2}}^{2}.
\]
Since (\ref{EstLp})-(\ref{EstF}) imply that%
\[
\int_{t}^{t+r}\left(  \frac{1}{2}A(\Vert u(s)\Vert_{H_{0}^{1}}^{2}%
-\int_{\Omega}\lambda\mathcal{F}(u\left(  s\right)  )dx\right)  ds\leq
K_{8}+K_{9}\Vert u(0)\Vert_{L^{2}}^{2},
\]
we can apply the Uniform Gronwall Lemma to get
\[
\frac{1}{2}A(\Vert u(t+r)\Vert_{H_{0}^{1}}^{2})-\int_{\Omega}\lambda
\mathcal{F}(u\left(  t+r\right)  )dx\leq\frac{K_{8}+K_{9}\Vert u(0)\Vert
_{L^{2}}^{2}}{r}+K_{10},\quad\text{ for all }t\geq0,
\]
so by condition $(A6)$, (\ref{condF}) and (\ref{Inequ}) it follows that
\[
\Vert u(t+1)\Vert_{H_{0}^{1}}^{2}\leq K_{11}+K_{12}\Vert u(0)\Vert_{L^{2}}%
^{2},
\]
for all $t\geq0$. In particular,
\[
\Vert u(1)\Vert_{H_{0}^{1}}^{2}\leq K_{11}+K_{12}\Vert u(0)\Vert_{L^{2}}^{2},
\]
for any strong solution $u(\cdot)$ with initial condition $u(0)$.

For any $u_{0}\in H_{0}^{1}\left(  \Omega\right)  $ with $\left\Vert
u_{0}\right\Vert _{H_{0}^{1}}\leq R$ and any $u\in\mathcal{R}$ such that
$u\left(  0\right)  =u_{0}$, the semiflow property $G(t+1,u_{0})\subset
G(1,G(t,u_{0}))$ and $G(t,u_{0})\subset B_{0}$, if $t\geq t_{0}\left(
R\right)  ,$ imply that
\[
\Vert u\left(  t+1\right)  \Vert_{H_{0}^{1}}^{2}\leq C(1+K^{2})\text{ }\forall
t\geq t_{0}\left(  R\right)  .
\]
Then there exists $M>0$ such that the closed ball $B_{M}$ in $H_{0}^{1}\left(
\Omega\right)  $ centered at $0$ with radius $M$ is absorbing for $G$.

By Lemma \ref{lemma4} the set $B_{1}=\overline{G(1,B_{M})}$ is an absorbing
set which is compact in $H_{0}^{1}\left(  \Omega\right)  $.
\end{proof}

\bigskip

Given an m-semiflow $G,$ a set $B\subset X$ is said to be negatively
(positively) invariant if $B\subset G(t,B)$ ($G(t,B)\subset B$) for all
$t\geq0$, and strictly invariant (or, simply, invariant) if it is both
negatively and positively invariant.

We recall that a set $\mathcal{A}\subset X$ is called a global attractor for
the m-semiflow $G$ if it is negatively invariant and attracts all bounded
subsets, i.e., $dist_{X}(G(t,B),\mathcal{A})\rightarrow0$ as $t\rightarrow
+\infty$. When $\mathcal{A}$ is compact, it is the minimal closed attracting
set \cite[Remark 5]{melnikvalero}.

\begin{theorem}
\label{existenceatracttorcase2}Assume the conditions of Proposition
\ref{absorbingset}. Then the multivalued semiflow $G$ possesses a global
compact invariant attractor $\mathcal{A}$.
\end{theorem}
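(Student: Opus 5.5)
The plan is to use the abstract existence theorem for global attractors of multivalued semiflows (e.g. Melnik--Valero \cite{melnikvalero}). That theorem says the following. If $G$ is a strict m-semiflow, $G(t,\cdot)$ is upper semicontinuous with closed (here compact) values, and $G$ has a compact absorbing set, then $G$ has a compact global attractor $\mathcal{A}$. This attractor is given by the $\omega$-limit set of the absorbing set. When the semiflow is strict, it is strictly invariant: $\mathcal{A}=G(t,\mathcal{A})$ for all $t\geq0$. So the proof consists of checking these hypotheses from results already established.

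\textbf{Step 1 (semiflow structure).} Under (A1), (A5), (A6), (A10), condition (A5) implies (\ref{Diss}) through (\ref{condf}). Hence Theorem \ref{existence} gives (K1), and (K2)--(K3) follow from (\ref{EquationRegular}). As observed before Lemma \ref{lemma4}, this makes $G$ a strict m-semiflow on $X=H_0^1(\Omega)$.

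\textbf{Step 2 (continuity and compactness).} Proposition \ref{uppersemicontinuity} gives that $G(t,\cdot)$ is upper semicontinuous for every $t\geq0$ and has compact values. Proposition \ref{absorbingset}, which is where (A7) or (A8) is used, gives the compact absorbing set $B_1=\overline{G(1,B_M)}$. From this I will deduce asymptotic compactness directly. Take $t_n\to\infty$, $x_n$ in a bounded set $B$, and $y_n\in G(t_n,x_n)$. By the strict semiflow property, $y_n\in G(1,z_n)$ with $z_n\in G(t_n-1,x_n)\subset B_M$ for large $n$. Lemma \ref{lemma4} then yields a convergent subsequence of $y_n$ in $H_0^1$, even when $z_n$ converges only weakly.

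\textbf{Step 3 (invariance).} Define $\mathcal{A}=\omega(B_1)=\bigcap_{s\geq0}\overline{\bigcup_{t\geq s}G(t,B_1)}$. By Step 2 it is nonempty and compact, and it attracts $B_1$, hence every bounded set, because $B_1$ is absorbing. Negative invariance $\mathcal{A}\subset G(t,\mathcal{A})$ follows the standard way. For $y=\lim y_n$ with $y_n\in G(t_n,b_n)$, write $y_n\in G(t,w_n)$ with $w_n\in G(t_n-t,b_n)$. Asymptotic compactness gives $w_n\to w\in\mathcal{A}$, and upper semicontinuity with closed values gives $y\in G(t,w)$. Positive invariance $G(t,\mathcal{A})\subset\mathcal{A}$ uses strictness, i.e.\ property (K3). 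For $z\in G(t,y)$ with $y=\lim y_n$, use Lemma \ref{lemma4} together with concatenation to pick $z_n\in G(t,y_n)\subset G(t+t_n,b_n)$ with $z_n\to z$. This needs a lower-semicontinuity-type selection, which the upper semicontinuity of Proposition \ref{uppersemicontinuity} does not provide.

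I expect positive invariance to be the main obstacle. If the direct selection fails, I would fall back on the characterization $\mathcal{A}=\{\gamma(0):\gamma \text{ a bounded complete trajectory in }\mathcal{R}\}$. A bounded complete trajectory exists through each point of $\mathcal{A}$, obtained from (K4) (Corollary \ref{PropK4}) and a diagonal argument. Shifting such trajectories by (K2) shows $G(t,\mathcal{A})\subset\mathcal{A}$. For points of $G(t,y)$ not on a chosen trajectory, concatenate via (K3) the backward part of the trajectory through $y$ with the given forward solution. The result is a complete bounded trajectory through $z$, so $z\in\mathcal{A}$.
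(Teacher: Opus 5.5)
Your proposal is correct and is essentially the paper's argument: the paper combines Proposition \ref{uppersemicontinuity}, Proposition \ref{absorbingset} and the strictness coming from (K3), and then cites \cite[Theorem 4 and Remark 8]{melnikvalero}, which is exactly the abstract theorem you unpack. The positive invariance you single out as the main obstacle is in fact immediate: $\mathcal{A}\subset G(s,\mathcal{A})$ and strictness give $G(t,\mathcal{A})\subset G(t,G(s,\mathcal{A}))=G(t+s,\mathcal{A})$ for all $s\geq0$, and letting $s\to+\infty$, the attraction of the bounded set $\mathcal{A}$ together with its closedness yields $G(t,\mathcal{A})\subset\mathcal{A}$, so your concatenation fallback, while valid, is unnecessary.
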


\begin{proof}
From Propositions \ref{uppersemicontinuity} and \ref{absorbingset} we deduce
that the multivalued semiflow $G$ is upper semicontinuous with closed values
and the existence of an absorbing which is compact in $H_{0}^{1}\left(
\Omega\right)  $. Therefore, by \cite[Theorem 4 and Remark 8]{melnikvalero}
the existence of the global invariant attractor and its compactness in
$H_{0}^{1}\left(  \Omega\right)  $ follow.
\end{proof}

\vspace{0.4cm}

We recall some concepts which are necessary to study the structure of the
global attractor.

\begin{definition}
\label{definition8} A map $\phi:\mathbb{R}\rightarrow X$ is a complete
trajectory of $\mathcal{R}$ if $\phi(\cdot+s)\mid_{\lbrack0,\infty)}%
\in\mathcal{R}$ for all $s\in\mathbb{R}$. It is a complete trajectory of $G$
if $\phi(t+s)\in G(t,\phi(s))$ for every $s\in\mathbb{R}$, $t\geq0.$

An element $z\in X$ is a fixed point of $\mathcal{R}$ if $\varphi(\cdot)\equiv
z\in\mathcal{R}$. We denote the set of all fixed points by $\mathfrak{R}%
_{\mathcal{R}}$.

An element $z\in X$ is a fixed point of $G$ if $z\in G(t,z)$ for every
$t\geq0$.
\end{definition}

Several properties concerning fixed points, complete trajectories and global
attractors are summarized in the following results \cite{kapustyankasyanov}.

\begin{lemma}
Let (K1)-(K2) hold. Then each fixed point (complete trajectory) of
$\mathcal{R}$ is also a fixed point (complete trajectory) of $G$.

Let (K1)-(K4) hold. Then the fixed points of $\mathcal{R}$ and $G$ are the
same. In addition, a map $\phi:\mathbb{R}\rightarrow X$ is a complete
trajectory of $\mathcal{R}$ if and only if it is continuous and a complete
trajectory of $G$.
\end{lemma}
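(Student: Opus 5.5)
For the first assertion only (K1)--(K2) are needed. If $z$ is a fixed point of $\mathcal{R}$, then $\varphi\equiv z\in\mathcal{R}$ and $\varphi(0)=z$, so by the definition (\ref{defG}) we get $z=\varphi(t)\in G(t,z)$ for every $t\geq0$. Now let $\phi$ be a complete trajectory of $\mathcal{R}$ and fix $s\in\mathbb{R}$. The map $\psi:=\phi(\cdot+s)|_{[0,\infty)}$ belongs to $\mathcal{R}$ and satisfies $\psi(0)=\phi(s)$. Hence $\phi(t+s)=\psi(t)\in G(t,\phi(s))$ for all $t\geq0$, which is exactly the definition of a complete trajectory of $G$.

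For the converse under (K1)--(K4), start with a fixed point $z$ of $G$. For each $n\in\mathbb{N}$ we have $z\in G(1/n,z)$, so there is $\psi_n\in\mathcal{R}$ with $\psi_n(0)=z$ and $\psi_n(1/n)=z$. Concatenating $\psi_n|_{[0,1/n]}$ with itself infinitely often is allowed by (K3) (iterated, using (K2) for the shifts). This gives $\phi_n\in\mathcal{R}$ with $\phi_n(0)=z$ and $\phi_n(k/n)=z$ for all $k\in\mathbb{N}$. By (K4) a subsequence converges pointwise to some $\phi\in\mathcal{R}$ with $\phi(0)=z$. For dyadic times $t=k/2^j$ we have $\phi_{2^m}(t)=z$ for $m\ge j$. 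So if we pass to the subsequence $n=2^m$ before applying (K4), we get $\phi(t)=z$ on the dyadics. Since elements of $\mathcal{R}$ are continuous (true here, as strong solutions lie in $C([0,T];H_0^1)$; abstractly one adds this as an assumption), we conclude $\phi\equiv z$, so $z\in\mathfrak{R}_{\mathcal{R}}$.

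For complete trajectories, one direction is already done, and continuity of a complete trajectory of $\mathcal{R}$ is clear since each shift lies in $\mathcal{R}$. Conversely, let $\phi$ be continuous and a complete trajectory of $G$, and fix $s\in\mathbb{R}$. We must show $\phi(\cdot+s)|_{[0,\infty)}\in\mathcal{R}$. For a partition $s=t_0<t_1<\dots$ with mesh $1/n$, the inclusions $\phi(t_{k+1})\in G(t_{k+1}-t_k,\phi(t_k))$ give pieces $\psi_k\in\mathcal{R}$ joining consecutive points. Concatenating them via (K3) produces $\phi_n\in\mathcal{R}$ with $\phi_n(0)=\phi(s)$ and $\phi_n(t_k-s)=\phi(t_k)$. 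Using nested dyadic partitions and (K4), a subsequence converges pointwise to some $\tilde\phi\in\mathcal{R}$ agreeing with $\phi(\cdot+s)$ on a dense set. Continuity of both maps then gives $\tilde\phi=\phi(\cdot+s)$ on $[0,\infty)$.

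The main obstacle is this last limiting step. (K4) only gives pointwise convergence along a subsequence, so we need the approximants to coincide with the target on a dense set of times that stabilises along the subsequence. Choosing nested dyadic partitions settles this: once $n=2^m\ge 2^j$, every $\phi_n$ equals $\phi(\cdot+s)$ at the points of the level-$j$ grid. With that in place, continuity of elements of $\mathcal{R}$ (which the paper's strong solutions have) finishes the argument.
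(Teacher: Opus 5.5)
Your proof is correct. The paper does not prove this lemma itself; it quotes it from \cite{kapustyankasyanov}. Your argument (dyadic grids, gluing pieces with (K3), a pointwise limit via (K4), then continuity to pass from the dyadic times to all times) is the standard proof of this abstract result.

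Two precisions are worth making explicit:
\begin{itemize}
\item (K3) only yields finitely many gluings by induction, so ``concatenating infinitely often'' is not literally covered. Let the $m$-th approximant glue only finitely many pieces covering $[0,2^m]$; this is all your dyadic argument uses. Alternatively, obtain the infinite concatenation as a (K4)-limit of the finite ones.
\item Continuity of the elements of $\mathcal{R}$ is an essential standing hypothesis of the framework ($\mathcal{R}\subset C(\mathbb{R}^+;X)$). Without it the ``continuous'' characterization fails. It holds here because strong solutions belong to $C([0,T];H_0^1(\Omega))$.
\end{itemize}
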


The standard well-known result in the single-valued case for describing the
attractor as the union of bounded complete trajectories reads in the
multivalued case as follows.

\begin{theorem}
\label{structureattractor} Suppose that (K1)-(K2) are satisfied and that
either (K3) or (K4) holds true. The semiflow $G$ is assumed to have a compact
global attractor $\mathcal{A}$. Then
\begin{equation}
\mathcal{A}=\{\gamma(0):\gamma\in\mathbb{K}\}=\cup_{t\in\mathbb{R}}%
\{\gamma(t):\gamma\in\mathbb{K}\},\label{attractor}%
\end{equation}
where $\mathbb{K}$ stands for the set of all bounded complete trajectories in
$\mathcal{R}$.
\end{theorem}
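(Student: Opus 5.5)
We will prove the two inclusions
\[
\mathcal{A}\subset\{\gamma(0):\gamma\in\mathbb{K}\}\subset\cup_{t\in\mathbb{R}}\{\gamma(t):\gamma\in\mathbb{K}\}\subset\mathcal{A}.
\]
The middle inclusion is trivial. The last one will use the negative invariance and attraction property of $\mathcal{A}$ together with the boundedness of complete trajectories. The first one is the substantial part: we must build a bounded complete trajectory through each point of $\mathcal{A}$.

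\emph{Step 1: $\gamma(t)\in\mathcal{A}$ for every $\gamma\in\mathbb{K}$ and $t\in\mathbb{R}$.} Let $B=\gamma(\mathbb{R})$, which is bounded. By (K2), for every $s\ge0$ the map $\gamma(\cdot+t-s)|_{[0,\infty)}$ belongs to $\mathcal{R}$. Hence $\gamma(t)\in G(s,\gamma(t-s))\subset G(s,B)$. Since $dist_{X}(G(s,B),\mathcal{A})\to0$ as $s\to+\infty$ and $\mathcal{A}$ is closed (it is compact), we get $\gamma(t)\in\mathcal{A}$.

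\emph{Step 2: construction of a trajectory through $z\in\mathcal{A}$.} By negative invariance, $z\in G(1,\mathcal{A})$. So there are $z_{1}\in\mathcal{A}$ and $\phi_{1}\in\mathcal{R}$ with $\phi_{1}(0)=z_{1}$ and $\phi_{1}(1)=z$. Iterating, we obtain $z_{k}\in\mathcal{A}$ and $\phi_{k}\in\mathcal{R}$ with $\phi_{k}(0)=z_{k}$ and $\phi_{k}(1)=z_{k-1}$, where $z_{0}=z$. Under (K3), we concatenate: define $\gamma(t)=\phi_{k}(t+k)$ for $t\in[-k,-k+1]$, and extend to $t\ge0$ by $\phi_{0}\in K(z)$ given by (K1). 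Repeated use of (K3) together with (K2) shows that every forward restriction $\gamma(\cdot+s)|_{[0,\infty)}$ lies in $\mathcal{R}$.

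To make $\gamma$ bounded, we need each piece to stay in $\mathcal{A}$. Negative invariance does not give this directly, so we first choose the pieces more carefully. We use the standard fact that the compact attractor is strictly invariant, $G(t,\mathcal{A})=\mathcal{A}$. This holds because $G(t,\mathcal{A})$ is attracted by $\mathcal{A}$ and, being contained in $G(t+s,\mathcal{A})$-type iterates of itself, lies in the closed set $\mathcal{A}$. Once positive invariance is known, every $\phi_{k}(t)$ belongs to $G(t,\mathcal{A})\subset\mathcal{A}$, so $\gamma$ is bounded.

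\emph{Case (K4) without (K3).} Here concatenation is not available, and this is the main obstacle. Instead we take, for each $n$, a trajectory $\psi^{n}\in\mathcal{R}$ with $\psi^{n}(0)\in\mathcal{A}$ and $\psi^{n}(n)=z$, which exists since $z\in G(n,\mathcal{A})$. We then use the compactness of $\mathcal{A}$, the compactness of values, and (K4) in a diagonal argument. We extract a subsequence such that $\psi^{n}(\cdot+n-k)$ converges pointwise on $[0,\infty)$ to some $\xi_{k}\in\mathcal{R}$ for every $k\in\mathbb{N}$. Uniqueness of pointwise limits gives the compatibility $\xi_{k+1}(t+1)=\xi_{k}(t)$. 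We then define $\gamma(t)=\xi_{k}(t+k)$ for $t\ge-k$. Boundedness follows because $\psi^{n}(s)\in G(s,\mathcal{A})\subset\mathcal{A}$ by positive invariance. It also follows that $\gamma(0)=\lim\psi^{n}(n)=z$, and $\gamma\in\mathbb{K}$. The delicate points are proving positive invariance of $\mathcal{A}$ in the multivalued setting and checking that the diagonal limits are consistent. In both cases they rely only on (K2), (K4) and the closedness of $\mathcal{A}$.
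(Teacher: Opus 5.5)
The paper does not prove this theorem; it quotes it from the literature. So I assess your argument on its own merits. The architecture is the standard one, and Step 1 is correct. (That $\gamma(\cdot+t-s)|_{[0,\infty)}\in\mathcal{R}$ is the definition of a complete trajectory rather than a consequence of (K2), but that is cosmetic.)

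The (K3) branch is also correct, but you should say where positive invariance comes from. Negative invariance gives $G(t,\mathcal{A})\subset G(t,G(s,\mathcal{A}))$. Strictness, which is exactly (K3), gives $G(t,G(s,\mathcal{A}))\subset G(t+s,\mathcal{A})$. Letting $s\to+\infty$ and using that $\mathcal{A}$ is bounded and closed yields $G(t,\mathcal{A})\subset\mathcal{A}$. Without (K3) you only have $G(t+s,x)\subset G(t,G(s,x))$, which points the wrong way.

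The genuine gap is in the (K4) branch. There you use $G(s,\mathcal{A})\subset\mathcal{A}$ and claim it follows from (K2), (K4) and closedness alone. It does not. Take $X=\mathbb{R}$ and $\mathcal{R}=\{xe^{-t}:x\in\mathbb{R}\}\cup\{(t+\tau)e^{-(t+\tau)}:\tau\geq0\}$.
\begin{itemize}
\item (K1) and (K2) clearly hold.
\item (K4) holds: if $\tau_{n}\to+\infty$, the pointwise limit is $0\cdot e^{-t}\in\mathcal{R}$.
\item $\{0\}$ is negatively invariant and attracts bounded sets, since $0\leq(t+\tau)e^{-(t+\tau)}\leq te^{-t}$ for $t\geq1$. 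Hence $\mathcal{A}=\{0\}$.
\item Yet $e^{-1}\in G(1,0)$, so $\mathcal{A}$ is not positively invariant. Here (K3) fails, as it must.
\end{itemize}
Both places where you invoke positive invariance therefore collapse. The first is the relative compactness of $\{\psi^{n}(n-k)\}_{n}$, which you need before (K4) can be applied at all. The second is the boundedness of $\gamma$.

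The fix is to use attraction instead of invariance. For fixed $s\in\mathbb{R}$ and $n\geq-s$ you have $\psi^{n}(n+s)\in G(n+s,\mathcal{A})$. Since $\mathcal{A}$ is bounded, $dist_{X}(G(n+s,\mathcal{A}),\mathcal{A})\to0$ as $n\to\infty$. Compactness of $\mathcal{A}$ then gives a convergent subsequence of $\psi^{n}(n-k)$, with limit in $\mathcal{A}$. With that, (K4) and your diagonal argument go through as you describe. The same estimate shows that every value $\gamma(s)=\lim\psi^{n}(n+s)$ lies in the closed set $\mathcal{A}$, so $\gamma\in\mathbb{K}$ with no positive invariance needed.
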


In view of Theorem \ref{structureattractor}, as $\mathcal{R}$ satisfies (K3)
and (K4) (by Corollary \ref{PropK4}), the global attractor is characterized in
terms of bounded complete trajectories, so (\ref{attractor}) follows.

The set $B$ is said to be weakly invariant if for any $x\in B$ there exists a
complete trajectory $\gamma$ of $\mathcal{R}$ contained in $B$ such that
$\gamma(0)=x$. Characterization (\ref{attractor}) implies that the attractor
$\mathcal{A}$ is weakly invariant.

The set of fixed points $\mathfrak{R}_{\mathcal{R}}$ is characterized as follows.

\begin{lemma}
\label{lemma22} Assume the conditions of Lemma \ref{lemma4}. Let
$\mathfrak{R}$ be the set of $z\in H^{2}(\Omega)\cap H_{0}^{1}(\Omega)$ such
that
\begin{equation}
-a(\Vert z\Vert_{H_{0}^{1}}^{2})\frac{d^{2}z}{dx^{2}}=\lambda f(z)+h\quad
\text{ in }L^{2}(\Omega). \label{eqfixpoint}%
\end{equation}
Then $\mathfrak{R}_{\mathcal{R}}=\mathfrak{R}$.
\end{lemma}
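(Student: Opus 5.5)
We prove the two inclusions separately, working directly from the definition of a fixed point of $\mathcal{R}$ in Definition \ref{definition8} and from the notion of strong solution.

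\emph{Inclusion $\mathfrak{R}\subset\mathfrak{R}_{\mathcal{R}}$.} Let $z\in H^{2}(\Omega)\cap H_{0}^{1}(\Omega)$ satisfy (\ref{eqfixpoint}), and set $\varphi(t)\equiv z$. We check that $\varphi$ is a strong solution with $\varphi(0)=z$.

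\begin{itemize}
\item Regularity: $\varphi\in L^{\infty}(0,T;H_{0}^{1}(\Omega))\cap L^{2}(0,T;D(A))$, and $d\varphi/dt=0\in L^{2}(0,T;L^{2}(\Omega))$.
\item Equation: since $d\varphi/dt=0$, multiplying (\ref{eqfixpoint}) by any $\xi\in L^{2}(0,T;L^{2}(\Omega))$ and integrating over $(0,T)\times\Omega$ gives exactly (\ref{EquationRegular}).
\end{itemize}

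Hence $\varphi\in K(z)\subset\mathcal{R}$, so $z\in\mathfrak{R}_{\mathcal{R}}$.

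\emph{Inclusion $\mathfrak{R}_{\mathcal{R}}\subset\mathfrak{R}$.} Let $z$ be a fixed point of $\mathcal{R}$, so that $\varphi\equiv z$ is a strong solution.

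\begin{itemize}
\item Membership in $H^{2}(\Omega)\cap H_{0}^{1}(\Omega)$: a strong solution lies in $L^{2}(0,T;D(A))$, so $z\in D(A)=H^{2}(\Omega)\cap H_{0}^{1}(\Omega)$.
\item Equation: $d\varphi/dt=0$, so (\ref{EquationRegular}) becomes
\[
\int_{0}^{T}\int_{\Omega}\Big(-a(\Vert z\Vert_{H_{0}^{1}}^{2})\frac{d^{2}z}{dx^{2}}-\lambda f(z)-h\Big)\xi\,dxdt=0
\quad\text{for all }\xi\in L^{2}(0,T;L^{2}(\Omega)).
\]
\item The bracket is independent of $t$, and it lies in $L^{2}(\Omega)$ because $f(z)\in L^{\infty}(\Omega)$ (using $z\in H_{0}^{1}\subset L^{\infty}$ in one dimension and (A1)).
\item Taking $\xi(t,x)=v(x)$ with $v\in L^{2}(\Omega)$ arbitrary gives $T\,(\cdot\,,v)=0$ for every $v$, hence (\ref{eqfixpoint}) holds in $L^{2}(\Omega)$.
\end{itemize}

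So $z\in\mathfrak{R}$.

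\emph{Main point of care.} The only delicate step is justifying that the weak formulation for the constant trajectory gives the pointwise-in-$L^{2}$ identity. This relies on the equivalence between (\ref{equationweak}) and (\ref{EquationRegular}) for strong solutions, and on $f(z)\in L^{2}(\Omega)$. Both are available from the earlier remarks and the one-dimensional embedding $H_{0}^{1}(\Omega)\subset L^{\infty}(\Omega)$.

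The hypotheses of Lemma \ref{lemma4} do not appear in this argument. They only guarantee that $\mathcal{R}$ is the well-defined set of strong solutions considered throughout.
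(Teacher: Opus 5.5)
Your proof is correct and follows the paper's argument. Both inclusions come from inserting the constant trajectory $u(t)\equiv z$, with $\frac{du}{dt}=0$, into (\ref{EquationRegular}); you add routine details the paper leaves implicit: $z\in D(A)$, the time-independent bracket lying in $L^{2}(\Omega)$, and the test function $\xi(t,x)=v(x)$.

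One small quibble concerns your closing remark that the hypotheses of Lemma \ref{lemma4} play no role. You use (A1) to get $f(z)\in L^{\infty}(\Omega)$, and (A10) is what makes $h$ independent of $t$, so that a constant trajectory can be a solution at all.
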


\begin{proof}
If $z\in\mathfrak{R}_{\mathcal{R}},$ then $u(t)\equiv z\in\mathcal{R}.$ Thus,
$u(\cdot)$ satisfies (\ref{EquationRegular}) and $\frac{du}{dt}=0$ in
$L^{2}(0,T;L^{2}(\Omega))$, so (\ref{eqfixpoint}) is satisfied. Let
$z\in\mathfrak{R}$. Then the map $u(t)\equiv z$ satisfies (\ref{eqfixpoint})
for any $t\geq0$ and $\frac{du}{dt}=0$ in $L^{2}(0,T;L^{2}(\Omega))$, so
(\ref{EquationRegular}) holds true.
\end{proof}

\vspace{0.4cm}

Finally, we shall obtain the characterization of the global attractor in terms
of the unstable and stable sets of the stationary points.

\begin{theorem}
\label{theorem12} Assume the conditions of Proposition \ref{absorbingset}.
Then it holds that
\[
\mathcal{A}=M^{+}(\mathfrak{R})=M^{-}(\mathfrak{R}),
\]
where
\begin{equation}
M^{+}(\mathfrak{R})=\{z:\exists\gamma(\cdot)\in\mathbb{K},\ \gamma
(0)=z,\ \text{ dist }_{H_{0}^{1}}(\gamma(t),\mathfrak{R})\rightarrow
0,\ t\rightarrow+\infty\}, \label{StableSet}%
\end{equation}%
\begin{equation}
M^{-}(\mathfrak{R})=\{z:\exists\gamma(\cdot)\in\mathbb{F},\ \gamma
(0)=z,\ \text{ dist }_{H_{0}^{1}}(\gamma(t),\mathfrak{R})\rightarrow
0,\ t\rightarrow-\infty\}, \label{UnstableSet}%
\end{equation}
and $\mathbb{F}$ denotes the set of all complete trajectories of $\mathcal{R}$
(see Definition \ref{definition8}).
\end{theorem}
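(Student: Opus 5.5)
The plan is to use the Lyapunov functional
\[
E(u)=\frac{1}{2}A(\Vert u\Vert_{H_{0}^{1}}^{2})-\lambda\int_{\Omega}\mathcal{F}(u)dx-(h,u),
\]
with $A$ as in (\ref{funcionA}). It is well defined and continuous on $H_{0}^{1}(\Omega)$ thanks to the embedding $H_{0}^{1}\subset L^{\infty}$. We combine it with the characterization (\ref{attractor}) of Theorem \ref{structureattractor}.

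\textbf{Energy equality.} For every strong solution $u$, multiply (\ref{problem1}) by $u_{t}$ and use (\ref{equality}) and (\ref{FDeriv}). This gives
\[
E(u(t))+\int_{s}^{t}\Vert u_{r}(r)\Vert_{L^{2}}^{2}dr=E(u(s)),\qquad t\geq s,
\]
as an equality, since $h$ is time independent and $(h,u)$ is absolutely continuous with derivative $(h,u_{t})$. Consequently $E$ is non-increasing along every element of $\mathcal{R}$. Moreover, if $E$ is constant on an interval, then $u_{t}=0$ there, so $u$ is a fixed point and lies in $\mathfrak{R}$ by Lemma \ref{lemma22}.

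\textbf{Inclusion $\mathcal{A}\subset M^{+}(\mathfrak{R})\cap M^{-}(\mathfrak{R})$.} Take $z\in\mathcal{A}$ and $\gamma\in\mathbb{K}$ with $\gamma(0)=z$. The range of $\gamma$ lies in the compact set $\mathcal{A}$, so $E(\gamma(t))$ is bounded and monotone, and its limits $E^{\pm}$ as $t\rightarrow\pm\infty$ exist. Let $t_{n}\rightarrow+\infty$ and, by compactness, assume $\gamma(t_{n})\rightarrow y$ in $H_{0}^{1}$. Apply Lemma \ref{lemma4} to the shifted trajectories $\gamma(\cdot+t_{n})\in\mathcal{R}$ (by (K2)). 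A subsequence converges pointwise in $H_{0}^{1}$ to some $v\in K(y)$. By continuity of $E$, $E(v(t))=\lim E(\gamma(t+t_{n}))=E^{+}$ for all $t\geq0$. Hence $v$ is stationary and $y=v(0)\in\mathfrak{R}$. Since every $\omega$-limit point lies in $\mathfrak{R}$ and the orbit is precompact, $\text{dist}_{H_{0}^{1}}(\gamma(t),\mathfrak{R})\rightarrow0$.

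The case $t\rightarrow-\infty$ is handled the same way, using $t_{n}\rightarrow-\infty$ and the shifts $\gamma(\cdot+t_{n})$ on $[0,\infty)$, which are again in $\mathcal{R}$ because $\gamma$ is complete. Thus $z\in M^{+}(\mathfrak{R})$, and also $z\in M^{-}(\mathfrak{R})$ because $\mathbb{K}\subset\mathbb{F}$.

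\textbf{Reverse inclusions.} Every $z\in M^{+}(\mathfrak{R})$ is, by definition, $\gamma(0)$ for some $\gamma\in\mathbb{K}$, so it lies in $\mathcal{A}$ by (\ref{attractor}).

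For $M^{-}(\mathfrak{R})$ the trajectory $\gamma$ is only complete, not a priori bounded, and this is the main step. First, $\mathfrak{R}$ is bounded: it is contained in $\mathcal{A}$, since constant trajectories belong to $\mathbb{K}$. Hence $\gamma(t)$ is bounded in $H_{0}^{1}$ for $t\leq -T_{0}$. On $[-T_{0},\infty)$, $\gamma$ is a trajectory starting from a point of $H_{0}^{1}$, and it stays bounded by the absorbing property of Proposition \ref{absorbingset} together with continuity on compact time intervals. Therefore $\gamma$ is bounded on $\mathbb{R}$, so $\gamma\in\mathbb{K}$ and $z\in\mathcal{A}$.

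The delicate point is making the bound on $(-\infty,-T_{0}]$ uniform. Convergence of $\text{dist}(\gamma(t),\mathfrak{R})$ to zero gives exactly this once $\mathfrak{R}$ is bounded in $H_{0}^{1}$. That boundedness follows from $\mathfrak{R}\subset\mathcal{A}$, with $\mathcal{A}$ compact.
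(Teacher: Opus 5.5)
Your proposal is correct and follows essentially the same route as the paper. Both use the Lyapunov functional $E$ and the energy equality. Both apply Lemma \ref{lemma4} to the time-shifted trajectories and use Lemma \ref{lemma22} to identify every forward or backward limit point as an equilibrium. Both take the reverse inclusions from (\ref{attractor}). Your explicit argument that a complete trajectory in $M^{-}(\mathfrak{R})$ is bounded, because $\mathfrak{R}\subset\mathcal{A}$ and solutions are absorbed forward in time, is exactly what the paper asserts in the remark following the theorem.
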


\begin{remark}
In (\ref{UnstableSet}) it is equivalent to use $\mathbb{K}$ instead of
$\mathbb{F}$ because all the solutions are bounded forward in time.
\end{remark}

\begin{proof}
We consider the function $E:\mathcal{A}\rightarrow\mathbb{R}$
\begin{equation}
E(y)=\frac{1}{2}A(\Vert y\Vert_{H_{0}^{1}}^{2})-\lambda\int_{\Omega
}F(y(x))dx-\int_{\Omega}h(x)y(x)dx. \label{Lyapunov}%
\end{equation}
Note that $E(y)$ is continuous in $H_{0}^{1}(\Omega)$. Indeed, the maps
$y\mapsto\frac{1}{2}A(\Vert y\Vert_{H_{0}^{1}}^{2})$ and $y\mapsto\int%
_{\Omega}h\left(  x\right)  y\left(  x\right)  dx$ are obviously continuous in
$H_{0}^{1}(\Omega)$. On the other hand, by the embedding $H_{0}^{1}%
(\Omega)\subset L^{\infty}(\Omega)$ and using Lebesgue's theorem, the
continuity of $y\rightarrow\int_{\Omega}F(y(x))dx$ follows.

Using (\ref{equality})-(\ref{FDeriv}) and multiplying the equation
(\ref{problem1}) by $\dfrac{du}{dt}$ for any $u\in\mathcal{R}$ we can obtain
the following energy equality
\[
\int_{s}^{t}\Vert\frac{d}{dr}u(r)\Vert_{L^{2}}^{2}dr+E(u(t))=E(u(s))\quad
\text{ for all }t\geq s\geq0.
\]
Hence, $E(u(t))$ is non-increasing and by $(A6)$, (\ref{condF}) and the
boundedness of $\mathcal{A}$, it is bounded from below. Thus
$E(u(t))\rightarrow l$, as $t\rightarrow+\infty$, for some $l\in\mathbb{R}.$

Let $z\in\mathcal{A}$ and $u\in\mathbb{K}$ be such that $u(0)=z$. By
contradiction, suppose the existence of $\varepsilon>0$ and $u(t_{n})$, where
$t_{n}\rightarrow+\infty$, for which $dist_{H_{0}^{1}}(u(t_{n}),\mathfrak{R}%
)>\varepsilon.$ Since $\mathcal{A}$ is compact in $H_{0}^{1}(\Omega)$, we can
take a converging subsequence (relabeled the same) such that $u(t_{n}%
)\rightarrow y$ in $H_{0}^{1}(\Omega)$, where $t_{n}\rightarrow\infty$. By the
continuity of the function $E$, it follows that $E(y)=l$. We will obtain a
contradiction by proving that $y\in\mathfrak{R}$. Define $v_{n}\left(
\text{\textperiodcentered}\right)  =u(\cdot+t_{n})$. By Lemma \ref{lemma4},
there exist $v\in$ $\mathcal{R}$ and a subsequence satisfying $v(0)=y$ and
$v_{n}(t)\rightarrow v(t)$ in $H_{0}^{1}(\Omega)$ for $t\geq0$. Thus, from
$E(v_{n}(t))\rightarrow E(v(t))$ we infer that $E(v\left(  t\right)  )=l$.
Also, $v(\cdot)$ satisfies the energy equality, so that
\[
l+\int_{0}^{t}\Vert v_{r}\Vert_{L^{2}}^{2}dr=E(v\left(  t\right)  )+\int%
_{0}^{t}\Vert v_{r}\Vert_{L^{2}}^{2}dr=E(v(0))=E(y)=l.
\]
Therefore, $\dfrac{dv}{dt}(s)=0$ for a.a. $s$, and then by Lemma \ref{lemma22}
we have $y\in\mathfrak{R}_{\mathcal{R}}=\mathfrak{R}$. As a consequence,
$\mathcal{A}\subset M^{+}(\mathfrak{R})$. The converse inclusion follows from
(\ref{attractor}).

As before, take arbitrary $z\in\mathcal{A}$ and $u\in\mathbb{K}$ satisfying
$u(0)=z$. Since by the embedding $H_{0}^{1}(\Omega)\subset C([0,1])$ the
energy function is bounded from above in $\mathcal{A}$, $E(u(t))\rightarrow
l$, as $t\rightarrow-\infty$, for some $l\in\mathbb{R}$. Suppose that there
are $\varepsilon>0$ and $u(t_{n})$, where $t_{n}\rightarrow+\infty$, such that
$dist_{H_{0}^{1}}(u(-t_{n}),\mathfrak{R})>\varepsilon$. Up to a subsequence we
have that $u(-t_{n})\rightarrow y$ in $H_{0}^{1}(\Omega)$, $E(y)=l$. Moreover,
for $v_{n}(\cdot)=u(\cdot-t_{n})$ there are $v\in\mathcal{R}$ and a
subsequence such that $v(0)=y$ and $v_{n}(t)\rightarrow v(t)$ in $H_{0}%
^{1}(\Omega)$ for $t\geq0$. Therefore, $E(v_{n}(t))\rightarrow E(v(t))$ gives
$E(v(t))=l$ and then by the above arguments we get a contradiction because
$y\in\mathfrak{R}$. Hence, $\mathcal{A}\subset M^{-}(\mathfrak{R})$ and we
deduce the converse inclusion from (\ref{attractor}).
\end{proof}

\bigskip

Finally, we are able to obtain that the global attractor is compact in the
space $C^{1}\left(  [0,1]\right)  $. This property will be important in order
to study a more precise structure of the global attractor in terms of the
stationary points and their heteroclinic connections.

We define the function $w\left(  t\right)  =u\left(  \alpha^{-1}\left(
t\right)  \right)  $, where $\alpha(t)=\int_{0}^{t}a(\Vert u(s)\Vert
_{H_{0}^{1}}^{2})ds$, which is under the conditions of Proposition
\ref{absorbingset} (see \cite{CabMarVal} for more details) a strong solution
to the problem%
\begin{equation}
\left\{
\begin{array}
[c]{l}%
\dfrac{\partial w}{\partial t}-\dfrac{\partial^{2}w}{\partial x^{2}}%
=\dfrac{f(w)+h}{a(\Vert w\Vert_{H_{0}^{1}}^{2})},\ \text{in }(0,\infty
)\times\Omega,\\
w=0\quad\text{on }(0,\infty)\times\partial\Omega,\\
w(0,x)=u_{0}(x)\quad\text{in }\Omega.
\end{array}
\right.  \label{w}%
\end{equation}

Let $V^{2r}=D(A^{r})$, $r\geq0$. We will prove first that the attractor is
compact in any space $V^{2r}$ with $0\leq r<1.$ For this aim we will need the
concept of mild solution. We consider the auxiliary problem{\normalsize
\begin{equation}
\left\{
\begin{array}
[c]{c}%
\dfrac{dv}{dt}+Av(t)=g\left(  t\right)  ,\ t>0,\\
v\left(  0\right)  =u_{0},
\end{array}
\right.  \label{ProblemAf}%
\end{equation}
where }$g\in L_{loc}^{2}\left(  0,+\infty;L^{2}\left(  \Omega\right)  \right)
$. {\normalsize The function $u\in C([0,+\infty),L^{2}\left(  \Omega\right)
)$ is called a mild solution to problem (\ref{ProblemAf}) if%
\begin{equation}
v\left(  t\right)  =e^{-At}u_{0}+\int_{0}^{t}e^{-A\left(  t-s\right)
}g(s)ds\text{, }\forall t\geq0. \label{VarConstants}%
\end{equation}
}In the same way as in Lemma 2 in \cite{Valero} we obtain that a strong
solution to problem (\ref{w}) is a mild solution to problem (\ref{ProblemAf})
with $g\left(  t\right)  =\left(  f(w\left(  t\right)  )+h\right)  /a(\Vert
w\left(  t\right)  \Vert_{H_{0}^{1}}^{2})$.

\begin{lemma}
\label{CompactVr}Assume the conditions of Proposition \ref{absorbingset}. Then
the global attractor $\mathcal{A}$ is compact in $V^{2r}$ for every $0\leq
r<1.$
\end{lemma}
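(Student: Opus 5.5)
We use the reparametrized solutions $w$ of (\ref{w}) together with the variation of constants formula (\ref{VarConstants}). We also use the invariance $\mathcal{A}\subset G(t,\mathcal{A})$ together with the characterization (\ref{attractor}) by bounded complete trajectories. The goal is to show that $\mathcal{A}$ is bounded in $V^{2r'}$ for some $r'\in(r,1)$. Compactness of the embedding $V^{2r'}\subset V^{2r}$ then gives relative compactness in $V^{2r}$. Closedness in $V^{2r}$ follows because $\mathcal{A}$ is compact in $H_{0}^{1}=V^{1}$ and $V^{2r}$-limits coincide with $H_0^1$-limits for $r\ge 1/2$. For $r<1/2$ the statement already follows from compactness in $H_0^1$.

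\textbf{Step 1: uniform bound on the forcing.} Since $\mathcal{A}$ is compact, hence bounded, in $H_{0}^{1}(\Omega)\subset L^{\infty}(\Omega)$, there is $R>0$ with $\Vert y\Vert_{L^\infty}\le R$ and $\Vert y\Vert_{H_0^1}\le R$ for all $y\in\mathcal{A}$. By continuity of $f$ and $a$, and $a\ge m$ by (A6), the function $g(t)=(\lambda f(w(t))+h)/a(\Vert w(t)\Vert_{H_0^1}^2)$ satisfies $\Vert g(t)\Vert_{L^2}\le C_R$. This holds uniformly for every $w$ obtained from a complete trajectory $\gamma\in\mathbb{K}$ through the time change. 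The time change $\alpha$ is bi-Lipschitz, with slopes between $m$ and $\max_{[0,R^2]}a$, so it maps $\mathbb{R}$ onto $\mathbb{R}$ and $w$ is again a complete trajectory lying in $\mathcal{A}$.

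\textbf{Step 2: smoothing estimate.} Fix $z\in\mathcal{A}$ and a bounded complete trajectory through $z$. Reparametrize it as above so that $w(0)=z$, and restart the mild formula at time $-1$:
\[
z=e^{-A}w(-1)+\int_{-1}^{0}e^{As}g(s)\,ds .
\]
We then estimate in $V^{2r'}$ using the analytic semigroup bounds $\Vert A^{r'}e^{-At}\Vert\le C t^{-r'}e^{-\lambda_1 t}$. This gives
\[
\Vert z\Vert_{V^{2r'}}\le C\Vert w(-1)\Vert_{L^2}+C\,C_R\int_0^1 s^{-r'}ds ,
\]
which is finite since $r'<1$ and independent of $z$. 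Here we use that the strong solution $w$ is a mild solution on $[-1,0]$, which is obtained by applying the cited Lemma 2 of \cite{Valero} to the shifted trajectory.

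\textbf{Main obstacle.} The delicate points are justifying the time change and the mild-solution representation backward in time for complete trajectories. We must check that the reparametrized complete trajectory exists on all of $\mathbb{R}$ and that the cited equivalence applies on each interval $[\tau,\infty)$. The bounds on $a$ along $\mathcal{A}$ handle the first issue. The second follows from the translation property (K2) applied to $\phi(\cdot+s)$.
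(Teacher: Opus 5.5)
Your proof is correct and follows essentially the paper's route: reparametrize to $w$, use the variation of constants formula and the smoothing bound $\Vert A^{r}e^{-At}\Vert\le M_{r}t^{-r}$ with a forcing uniformly bounded on $\mathcal{A}$, then conclude by compact embedding plus closedness. The only difference is cosmetic: the paper writes $z=u(1)$ via forward invariance and integrates over $[0,\alpha(1)]$, implicitly relying on $\alpha(1)\in[m,\max_{[0,R^{2}]}a]$ for uniformity, whereas your complete trajectory with a fixed $w$-time window of length $1$ makes that uniformity explicit.
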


\begin{proof}
Let $z\in\mathcal{A}$ be arbitrary. Since $\mathcal{A}$ is invariant, there
exist $u_{0}\in\mathcal{A}$ and $u\in\mathcal{R}$ such that $z=u\left(
1\right)  $ and $u\left(  t\right)  \in\mathcal{A}$ for all $t\geq0$. Since
$w\left(  t\right)  =u\left(  \alpha^{-1}\left(  t\right)  \right)  $ is a
mild solution of (\ref{ProblemAf}) with $g\left(  t\right)  =\left(
f(w\left(  t\right)  )+h\right)  /a(\Vert w\left(  t\right)  \Vert_{H_{0}^{1}%
}^{2})$, the variation of constants formula (\ref{VarConstants}) gives%
\[
z=w(\alpha\left(  1\right)  )=e^{-A\alpha\left(  1\right)  }u_{0}+\int%
_{0}^{\alpha\left(  1\right)  }e^{-A\left(  \alpha\left(  1\right)  -s\right)
}g(s)ds.
\]
As $\mathcal{A}$ is bounded in $H_{0}^{1}\left(  \Omega\right)  $ (and then in
$L^{\infty}\left(  \Omega\right)  $), condition (A6) and the continuity of $f$
imply that%
\[
\left\Vert u_{0}\right\Vert _{L^{2}}\leq C,\ \left\Vert g\right\Vert
_{L^{\infty}\left(  0,\alpha\left(  1\right)  ;L^{2}\left(  \Omega\right)
\right)  }\leq C,
\]
where $C>0$ does not depend on $z$. The standard estimate $\left\Vert
e^{-At}\right\Vert _{\mathcal{L}(L^{2}\left(  \Omega\right)  ,D(A^{r}))}\leq
M_{r}t^{-r}e^{-at},\ M_{r},a>0$ \cite[Theorem 37.5]{sellyou}, implies that%
\begin{align*}
\left\Vert A^{r}z\right\Vert _{L^{2}}  &  \leq\left\Vert A^{r}e^{-A\alpha
\left(  1\right)  }u_{0}\right\Vert _{L^{2}}+\int_{0}^{\alpha\left(  1\right)
}\left\Vert A^{r}e^{-A\left(  \alpha\left(  1\right)  -s\right)
}g(s)\right\Vert _{L^{2}}ds\\
&  \leq M_{r}e^{-a\alpha\left(  1\right)  }\alpha\left(  1\right)
^{-r}C+M_{r}C\int_{0}^{\alpha\left(  1\right)  }\left(  \alpha\left(
1\right)  -s\right)  ^{-r}ds,
\end{align*}
so $\mathcal{A}$ is bounded in $V^{2r}$ for every $0\leq r<1.$

From the compact embedding $V^{\alpha}\subset V^{\beta},$ for $\alpha>\beta$,
and the fact that $\mathcal{A}$ is closed in any $V^{2r}$ we obtain the result.
\end{proof}

\begin{corollary}
\label{CompactC1}Assume the conditions of Proposition \ref{absorbingset}. Then
the global attractor $\mathcal{A}$ is compact in $C^{1}([0,1]).$
\end{corollary}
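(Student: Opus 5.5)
The plan is to combine Lemma \ref{CompactVr} with the Sobolev embedding of the fractional power spaces $V^{2r}=D(A^{r})$ into $C^{1}([0,1])$ in dimension one. We have $D(A^{r})\subset H^{2r}(\Omega)$ with equivalent norms, with the appropriate boundary conditions for $2r>1/2$. In one space dimension, $H^{s}(0,1)\subset C^{1}([0,1])$ continuously as soon as $s>3/2$. So we fix some $r\in(3/4,1)$, say $r=7/8$. Then $V^{2r}\hookrightarrow H^{7/4}(0,1)\hookrightarrow C^{1}([0,1])$ continuously.

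By Lemma \ref{CompactVr}, $\mathcal{A}$ is compact in $V^{2r}$. A continuous linear injection maps compact sets to compact sets, so $\mathcal{A}$, viewed as a subset of $C^{1}([0,1])$, is compact there. The inclusion is injective because elements of $V^{2r}$ are identified with their continuous representatives. Equivalently, any sequence in $\mathcal{A}$ has a subsequence converging in $V^{2r}$, hence in $C^{1}$. The limit belongs to $\mathcal{A}$ because $\mathcal{A}$ is closed in $V^{2r}$, and it is the same element in both topologies.

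The only step needing care is the identification $D(A^{r})\subset H^{2r}(0,1)$ together with the Sobolev embedding for $2r>3/2$. For the Dirichlet Laplacian on an interval this can be done by hand with the eigenfunction expansion $z=\sum c_{k}\sqrt{2}\sin(k\pi x)$. In that basis $\Vert A^{r}z\Vert_{L^{2}}^{2}=\sum (k\pi)^{4r}|c_{k}|^{2}$. Differentiating gives $z'=\sum c_{k}\sqrt{2}k\pi\cos(k\pi x)$, and
\[
\sum_{k}k|c_{k}|\leq\Big(\sum_{k}k^{4r}|c_{k}|^{2}\Big)^{1/2}\Big(\sum_{k}k^{2-4r}\Big)^{1/2},
\]
which is finite since $4r-2>1$. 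Hence the series for $z$ and $z'$ converge uniformly, and $\Vert z\Vert_{C^{1}}\leq C\Vert A^{r}z\Vert_{L^{2}}$. This direct argument avoids any interpolation theory.
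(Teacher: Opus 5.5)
Your proposal is correct and follows the paper's proof: choose $r\in(3/4,1)$, use the continuous embedding $V^{2r}\subset C^{1}([0,1])$, and apply the compactness of $\mathcal{A}$ in $V^{2r}$ from Lemma \ref{CompactVr}. The only difference is that you prove the embedding directly, via the sine expansion and Cauchy--Schwarz (valid exactly because $4r-2>1$), whereas the paper cites Lemma 37.8 of \cite{sellyou}.
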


\begin{proof}
We obtain by Lemma 37.8 in \cite{sellyou} the continuous embedding%
\[
V^{2r}\subset C^{1}([0,1])\text{ if }r>\frac{3}{4}.
\]
Hence, the statement follows from Lemma \ref{CompactVr}.
\end{proof}

\section{ Fixed points}

In this section we are interested in studying the fixed points of problem
(\ref{problem1}) when $h\equiv0$, that is, the solutions of the boundary-value
problem%
\begin{equation}
\left\{
\begin{array}
[c]{c}%
-a(\Vert u\Vert_{H_{0}^{1}}^{2})\dfrac{d^{2}u}{dx^{2}}=\lambda f(u),\ 0<x<1,\\
u\left(  0\right)  =u\left(  1\right)  =0.
\end{array}
\right.  \label{elliptic}%
\end{equation}
For this aim we will use the properties of the fixed points of the standard
Chafee-Infante equation. In order to do that, for any $d\geq0$ we will study
the following boundary-value problem%
\begin{equation}
\left\{
\begin{array}
[c]{c}%
-a(d)\dfrac{d^{2}u}{dx^{2}}=\lambda f(u),\ 0<x<1,\\
u\left(  0\right)  =u\left(  1\right)  =0,
\end{array}
\right.  \label{FixedChaffee}%
\end{equation}
as it is obvious that $u\left(  \text{\textperiodcentered}\right)  $ is
solution to problem (\ref{elliptic}) if and only if $u\left(
\text{\textperiodcentered}\right)  $ is a solution to problem
(\ref{FixedChaffee}) with $d=\Vert u\Vert_{H_{0}^{1}}^{2}.$

\subsection{Dependence on the parameters of the fixed points for the
Chafee-Infante equation}

Denoting $\widetilde{\lambda}=\dfrac{\lambda}{a\left(  d\right)  }$ problem
(\ref{FixedChaffee}) becomes%
\begin{equation}
\left\{
\begin{array}
[c]{c}%
-\dfrac{d^{2}u}{dx^{2}}=\widetilde{\lambda}f(u),\ 0<x<1,\\
u\left(  0\right)  =u\left(  1\right)  =0.
\end{array}
\right.  \label{Chafee2}%
\end{equation}
Assuming conditions (A1)-(A5), it is known \cite{CabCarvMarVal} that if
$n^{2}\pi^{2}<\widetilde{\lambda}\leq\left(  n+1\right)  ^{2}\pi^{2}$, then
this problem has exactly $2n+1$ solutions, denoted by $v_{0}\equiv
0,\ v_{1}^{\pm},...,v_{n}^{\pm}.$ The function $v_{k}^{\pm}$ has $k+1$ simple
zeros in $[0,1].$

We need to study the dependence of the norm of these fixed points on the
parameter $\widetilde{\lambda}$. First, we will show that the $H^{1}$-norm of
the fixed points of problem (\ref{Chafee2}) is strictly increasing with
respect to the parameter $\widetilde{\lambda}.$

\begin{lemma}
\label{Increasing}Assume conditions (A1)-(A5). Let $v_{1}=v_{k,\lambda_{1}%
}^{+},\ v_{2}=v_{k,\lambda_{2}}^{+}$ with $k^{2}\pi^{2}<\lambda_{1}%
<\lambda_{2}$. Then $\left\Vert v_{1}\right\Vert _{H_{0}^{1}}<\left\Vert
v_{2}\right\Vert _{H_{0}^{1}}.$
\end{lemma}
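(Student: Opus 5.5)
The plan is to reduce everything to the autonomous equation $-w''=f(w)$ by rescaling, and to use the time-map (phase-plane) description of the Chafee--Infante equilibria. For $\lambda>k^{2}\pi^{2}$, let $v=v_{k,\lambda}^{+}$ and set $w(y)=v(y/\sqrt{\lambda})$ for $y\in[0,\sqrt{\lambda}]$. Then $-w''=f(w)$, $w(0)=w(\sqrt{\lambda})=0$, and $w$ has the same nodal structure as $v$. Since $v'(x)=\sqrt{\lambda}\,w'(\sqrt{\lambda}x)$, a change of variables gives
\[
\Vert v\Vert_{H_{0}^{1}}^{2}=\int_{0}^{1}|v'|^{2}dx=\sqrt{\lambda}\int_{0}^{\sqrt{\lambda}}|w'(y)|^{2}dy .
\]
Since the factor $\sqrt{\lambda}$ is strictly increasing, it suffices to show that $I(\lambda):=\int_{0}^{\sqrt{\lambda}}|w'|^{2}dy$ is non-decreasing in $\lambda$; in fact I expect it to be strictly increasing.

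\textbf{Energy parametrization.} The first integral is $\frac12 (w')^{2}+\mathcal{F}(w)=\mathcal{F}(\alpha)=:e$. Here $\alpha>0$ is the maximum of $w$, and $\beta<0$ is its minimum, determined by $\mathcal{F}(\beta)=e$. By (A2)--(A4), $f(s)/s$ is strictly decreasing on $(0,\infty)$ and strictly increasing on $(-\infty,0)$, and $f>0$ on the relevant range $(0,\alpha)$ (and $f<0$ on $(\beta,0)$), because a positive hump can only reach a level where $f$ is still positive. Hence $\mathcal{F}$ is strictly monotone on $(0,\alpha)$ and on $(\beta,0)$, so $e$ determines $\alpha$ and $\beta$, both increasing in modulus as $e$ increases. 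Every positive hump of $w$ has length
\[
L_{+}(e)=2\int_{0}^{\alpha}\frac{ds}{\sqrt{2(e-\mathcal{F}(s))}},
\]
and every negative hump has the analogous length $L_{-}(e)$ with integral over $(\beta,0)$. The function $v_{k}^{+}$ consists of $\lceil k/2\rceil$ positive humps and $\lfloor k/2\rfloor$ negative humps, all with the same energy, so
\[
\sqrt{\lambda}=T(e):=\lceil k/2\rceil L_{+}(e)+\lfloor k/2\rfloor L_{-}(e).
\]

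\textbf{Key step (main obstacle).} The hard part is proving that $L_{\pm}$ are strictly increasing in $e$. This is the classical monotonicity of the Chafee--Infante time map under the concavity hypothesis (A4); it is exactly what yields uniqueness and the bifurcation picture quoted from \cite{CabCarvMarVal}. To prove it I would substitute $s=\alpha t$, which gives $L_{+}=2\int_{0}^{1}\alpha\,dt/\sqrt{2(\mathcal{F}(\alpha)-\mathcal{F}(\alpha t))}$, and then show that $\bigl(\mathcal{F}(\alpha)-\mathcal{F}(\alpha t)\bigr)/\alpha^{2}$ is strictly decreasing in $\alpha$ for each fixed $t\in(0,1)$. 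Its $\alpha$-derivative has the sign of $\alpha f(\alpha)-\alpha t f(\alpha t)-2(\mathcal{F}(\alpha)-\mathcal{F}(\alpha t))=\int_{\alpha t}^{\alpha}(sf(s))'-2f(s)\,ds=\int_{\alpha t}^{\alpha}\bigl(sf'(s)-f(s)\bigr)ds$, which is negative because $f(s)/s$ is decreasing. If $f$ is not differentiable, I would argue directly with the strict monotonicity of $f(s)/s$ instead. The negative humps are handled symmetrically. Consequently $T$ is strictly increasing and continuous, so $e=e(\lambda)$ is strictly increasing in $\lambda$, and therefore $\alpha$ and $|\beta|$ are as well. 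This monotonicity could presumably be quoted from \cite{CabCarvMarVal}, since uniqueness of $v_{k}^{\pm}$ rests on it.

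\textbf{Conclusion.} On each hump,
\[
\int|w'|^{2}dy=2\int_{0}^{\alpha}\sqrt{2(e-\mathcal{F}(s))}\,ds,
\]
and similarly on $(\beta,0)$ for the negative humps. Both the integrand and the interval of integration strictly increase with $e$. Summing over the fixed numbers of humps shows that $I(\lambda)$ is strictly increasing in $\lambda$, and multiplying by the strictly increasing factor $\sqrt{\lambda}$ gives $\Vert v_{1}\Vert_{H_{0}^{1}}<\Vert v_{2}\Vert_{H_{0}^{1}}$.
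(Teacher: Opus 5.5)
Your proposal is correct and follows essentially the paper's route: parametrize $v_{k}^{+}$ by its energy level through the time-map relation (hump lengths summing to the domain length), show that this energy increases strictly with the parameter, and then write $\Vert v'\Vert_{L^{2}}^{2}$ hump by hump as $2\int_{0}^{\alpha}\sqrt{2(e-\mathcal{F}(s))}\,ds$, which increases in both the integrand and the upper limit (the paper keeps $\widetilde{\lambda}$ inside the square root rather than rescaling $x$, which is equivalent). The one real addition is that you prove the strict monotonicity of the hump lengths in $e$ from the decrease of $f(s)/s$; the paper uses this monotonicity only implicitly, when it deduces that $E_{k}^{+}(\widetilde{\lambda})$ increases from the fact that $\tau_{\pm}^{\widetilde{\lambda}}(E)$ decreases in $\widetilde{\lambda}$ at fixed $E$, relying on \cite{CabCarvMarVal}.
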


\begin{proof}
We consider the equivalent norm in $H_{0}^{1}\left(  \Omega\right)  $ given by
$\left\Vert v^{\prime}\right\Vert _{L^{2}}$. The fixed points are the
solutions of the initial value problem%
\begin{equation}
\left\{
\begin{array}
[c]{c}%
\dfrac{d^{2}u}{dx^{2}}+\widetilde{\lambda}f(u)=0,\\
u(0)=0,\ u^{\prime}(0)=v_{0}%
\end{array}
\right.  \label{IV}%
\end{equation}
such that $u\left(  1\right)  =0$. The solutions of (\ref{IV}) satisfy the
relation%
\begin{equation}
\frac{(u^{\prime}(x))^{2}}{2}+\widetilde{\lambda}F(u(x))=\widetilde{\lambda
}E,\ 0\leq x\leq1, \label{Level}%
\end{equation}
for some constant $E\geq0$. Denote $u_{\widetilde{\lambda}}%
=v_{k,\widetilde{\lambda}}^{+}$. By Theorem 7 in \cite{CabCarvMarVal} we have
that $u_{\widetilde{\lambda}}$ is associated with a unique value $E=E_{k}%
^{+}(\widetilde{\lambda})>0.$ Moreover, $E_{k}^{+}(\widetilde{\lambda})$ is a
solution of one of the following equations:%
\begin{align}
m\tau_{+}^{\widetilde{\lambda}}(E)+(m-1)\tau_{-}^{\widetilde{\lambda}}(E)  &
=\frac{1}{\sqrt{2}},\nonumber\\
m\tau_{-}^{\widetilde{\lambda}}(E)+(m-1)\tau_{+}^{\widetilde{\lambda}}(E)  &
=\frac{1}{\sqrt{2}},\nonumber\\
m\tau_{+}^{\widetilde{\lambda}}(E)+m\tau_{-}^{\widetilde{\lambda}}(E)  &
=\frac{1}{\sqrt{2}}, \label{Roots}%
\end{align}
where either $k=2m-1$ or $k=2m$ and
\begin{equation}
\tau_{+}^{\widetilde{\lambda}}(E)=\widetilde{\lambda}^{-1/2}\int_{0}%
^{U_{+}(E)}\!\!(E-F(u))^{{-1}/{2}}\ du,\ \label{Tao+}%
\end{equation}%
\begin{equation}
\tau_{-}^{\widetilde{\lambda}}(E)=\widetilde{\lambda}^{-1/2}\int_{U_{-}%
(E)}^{0}(E-F(u))^{{-1}/{2}}\ du, \label{Tao-}%
\end{equation}
being $U_{+}(E)$ ($U_{-}(E)$) the positive (negative) inverse of $F$ at $E$.
It is obvious\ that for $E$ fixed the functions $\tau_{+}^{\widetilde{\lambda
}}(E)$, $\tau_{-}^{\widetilde{\lambda}}(E)$ are strictly decreasing with
respect to $\widetilde{\lambda}$. Then from (\ref{Roots}) we deduce that the
root $E_{k}^{+}(\widetilde{\lambda})$ is strictly increasing with respect to
$\widetilde{\lambda}$. Thus, If $\lambda_{1}<\lambda_{2}$, we have
\begin{equation}
\sqrt{2\lambda_{1}(E_{k}^{+}(\lambda_{1})-F(u))}<\sqrt{2\lambda_{2}(E_{k}%
^{+}(\lambda_{2})-F(u))},\quad U^{-}(E_{k}^{+}(\lambda_{1}))\leq u\leq
U^{+}(E_{k}^{+}(\lambda_{1})). \label{INeqSQRT}%
\end{equation}
We will prove now that $\Vert u_{\widetilde{\lambda}}^{\prime}\Vert_{L^{2}}$
is strictly increasing in $\widetilde{\lambda}$.

The function $u_{\widetilde{\lambda}}$ has $k+1$ simple zeros in $[0,1]$ and
$u_{\widetilde{\lambda}}$ is positive in the first subinterval. Let
$T_{+}(E_{k}^{+}(\lambda))$ be the $x$-time necessary to go from the initial
condition $u_{\lambda}(0)=0$ to the point where $u_{\lambda}^{\prime}%
(T_{+}(E_{k}^{+}(\lambda)))=0$. Then the length of the first subinterval is
$2T_{+}(E_{k}^{+}(\lambda))$ \cite{CabCarvMarVal}. By (\ref{Level}),
\[
(u_{\widetilde{\lambda}}^{\prime}(x))^{2}=\sqrt{2\widetilde{\lambda}}%
\sqrt{E_{k}^{+}(\widetilde{\lambda})-F(u_{\widetilde{\lambda}}(x))}%
u_{\widetilde{\lambda}}^{\prime}(x),
\]
so we have
\[
\int_{0}^{T_{+}(E_{k}^{+}(\widetilde{\lambda}))}(u_{\widetilde{\lambda}%
}^{\prime}(x))^{2}dx=\int_{0}^{T_{+}(E_{k}^{+}(\widetilde{\lambda}))}%
\sqrt{2\widetilde{\lambda}}\sqrt{E_{k}^{+}(\widetilde{\lambda}%
)-F(u_{\widetilde{\lambda}}(x))}u_{\widetilde{\lambda}}^{\prime}(x)dx.
\]
By the change of variable $v=u_{\widetilde{\lambda}}(x)$ we obtain%
\[
\int_{0}^{T_{+}(E_{k}^{+}(\widetilde{\lambda}))}(u_{\widetilde{\lambda}%
}^{\prime}(x))^{2}dx=\int_{0}^{U^{+}(E_{k}^{+}(\widetilde{\lambda}))}%
\sqrt{2\widetilde{\lambda}}\sqrt{E_{k}^{+}(\widetilde{\lambda})-F(v)}%
dv=g(\widetilde{\lambda}).
\]
Since $\widetilde{\lambda}\mapsto U^{+}(E_{k}^{+}(\widetilde{\lambda}))$ is
strictly increasing and using (\ref{INeqSQRT}), we conclude that the function
$g(\widetilde{\lambda})$ is strictly increasing. Hence, putting$\ x_{1}%
(\widetilde{\lambda})=2T_{+}(E_{k}^{+}(\widetilde{\lambda}))$ we obtain that
the norm of $u_{\widetilde{\lambda}}$ in the first subinterval, $\left\Vert
u_{\widetilde{\lambda}}^{\prime}\right\Vert _{L^{2}(0,x_{1}(\widetilde{\lambda
}))}$, is strictly increasing. Arguing in the same way in the other
subintervals we obtain that $\widetilde{\lambda}\mapsto\Vert
u_{\widetilde{\lambda}}^{\prime}\Vert_{L^{2}}$ is strictly increasing.
\end{proof}

\bigskip

Let us prove the same result but with respect to the norm $\left\Vert
u_{\widetilde{\lambda}}\right\Vert _{L^{p}}$ with $p\geq1.$

\begin{lemma}
\label{Increasing2}Assume conditions (A1)-(A5) and let $f$ be odd. Let
$v_{1}=v_{k,\lambda_{1}}^{+},\ v_{2}=v_{k,\lambda_{2}}^{+}$ with $k^{2}\pi
^{2}<\lambda_{1}<\lambda_{2}$. Then $\left\Vert v_{1}\right\Vert _{L^{p}%
}<\left\Vert v_{2}\right\Vert _{L^{p}}$ for any $p\geq1.$
\end{lemma}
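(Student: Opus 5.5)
The plan is to reduce the $L^{p}$ norm to an integral over a single monotone piece, just as in the proof of Lemma \ref{Increasing}, using the oddness of $f$ to make all pieces identical. Since $f$ is odd, $F$ is even, so $U_{-}(E)=-U_{+}(E)$ and $\tau_{+}^{\widetilde{\lambda}}(E)=\tau_{-}^{\widetilde{\lambda}}(E)$. Hence $v_{k,\widetilde{\lambda}}^{+}$ consists of $k$ arcs of equal length $1/k$ (reflections and sign changes of the first one). Each arc in turn consists of two monotone pieces symmetric about the maximum point $T_{+}=\frac{1}{2k}$.

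Writing $u=u_{\widetilde{\lambda}}=v_{k,\widetilde{\lambda}}^{+}$ and $E=E_{k}^{+}(\widetilde{\lambda})$, I therefore have
\[
\Vert u\Vert_{L^{p}}^{p}=2k\int_{0}^{1/(2k)}|u(x)|^{p}dx.
\]
On $[0,1/(2k)]$ the function $u$ increases from $0$ to $U_{+}(E)$. By (\ref{Level}), $u^{\prime}=\sqrt{2\widetilde{\lambda}}\sqrt{E-F(u)}$. The problem is thus reduced to comparing the profiles of the first quarter-arc for $\lambda_{1}<\lambda_{2}$, in which the half-length $1/(2k)$ is the same for both values.

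The key step is a pointwise comparison $u_{\lambda_{1}}(x)<u_{\lambda_{2}}(x)$ for $x\in(0,1/(2k)]$. At $x=0$ both functions vanish, and the slopes satisfy $u_{\lambda_{1}}^{\prime}(0)=\sqrt{2\lambda_{1}E_{1}}<\sqrt{2\lambda_{2}E_{2}}=u_{\lambda_{2}}^{\prime}(0)$, using that $E_{k}^{+}$ is strictly increasing (shown in Lemma \ref{Increasing}). So $u_{\lambda_{1}}<u_{\lambda_{2}}$ near $0^{+}$. Suppose $x_{0}\in(0,1/(2k)]$ is the first point where they meet, with common value $c\le U_{+}(E_{1})$. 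By (\ref{INeqSQRT}), at the value $c$ the speed of $u_{\lambda_{1}}$ is strictly smaller than that of $u_{\lambda_{2}}$, except when $c=U_{+}(E_{1})$, where $u_{\lambda_1}$ has zero speed.

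The cleanest way to exploit this is via the inverse functions (the \lq\lq $x$-time\rq\rq\ to reach a level $c$):
\[
X_{i}(c)=\int_{0}^{c}\frac{dv}{\sqrt{2\lambda_{i}(E_{i}-F(v))}},\qquad 0\le c\le U_{+}(E_{i}).
\]
By (\ref{INeqSQRT}) the integrand for $i=2$ is strictly smaller, so $X_{2}(c)<X_{1}(c)$ for every $c\in(0,U_{+}(E_{1})]$. In words, $u_{\lambda_{2}}$ reaches every level strictly earlier than $u_{\lambda_{1}}$. Since both functions are increasing on $[0,1/(2k)]$, this gives $u_{\lambda_{1}}(x)<u_{\lambda_{2}}(x)$ for $0<x<1/(2k)$. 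At $x=1/(2k)$ the values are $U_{+}(E_{1})<U_{+}(E_{2})$. This inverse-function formulation avoids the delicate first-contact argument at the turning point.

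Finally, since $0\le u_{\lambda_{1}}<u_{\lambda_{2}}$ on $(0,1/(2k)]$ and $t\mapsto t^{p}$ is strictly increasing on $[0,\infty)$ for every $p\ge1$, integrating and multiplying by $2k$ gives $\Vert v_{1}\Vert_{L^{p}}<\Vert v_{2}\Vert_{L^{p}}$.

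The main obstacle is justifying the equal-length decomposition. The ingredient I would state explicitly is that oddness makes positive and negative arcs congruent, so every zero of $v_{k,\widetilde{\lambda}}^{+}$ sits at $j/k$ independently of $\widetilde{\lambda}$. Without oddness the arc lengths $2T_{\pm}$ depend on $\widetilde{\lambda}$, and the pointwise comparison on a fixed interval would break down; this is presumably why the hypothesis is needed here but not in Lemma \ref{Increasing}.
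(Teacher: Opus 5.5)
Your proposal is correct and follows essentially the same route as the paper: oddness puts the zeros at $j/k$ independently of $\widetilde{\lambda}$, and on $[0,\frac{1}{2k}]$ the level-time formula $x=\int_0^{u(x)}\bigl(2\widetilde{\lambda}(E-F(v))\bigr)^{-1/2}dv$ combined with (\ref{INeqSQRT}) gives $u_{\lambda_1}(x)<u_{\lambda_2}(x)$, which symmetry then extends to the whole interval. Your inverse-function comparison $X_2(c)<X_1(c)$ and the reduction $\Vert u\Vert_{L^p}^p=2k\int_0^{1/(2k)}|u|^p\,dx$ are a slightly more explicit packaging of that same argument.
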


\begin{proof}
As in the previous lemma, denote $u_{\widetilde{\lambda}}%
=v_{k,\widetilde{\lambda}}^{+}$. The function $u_{\widetilde{\lambda}}$ has
$k+1$ zeros in $[0,1]$ at the points $0<x_{1}<x_{2}<...<x_{k-1}<1$. When $f$
is odd, by symmetry, the length of all subintervals has to be the same, so
$x_{j}=\frac{j}{k}$ regardless the value of $\widetilde{\lambda}.$

We shall prove that in the first subinterval we have that $u_{\lambda_{1}%
}\left(  x\right)  <u_{\lambda_{2}}\left(  x\right)  ,$ for all $x\in\left(
0,\frac{1}{k}\right)  $. By (\ref{Level}) for $x\in\lbrack0,\frac{1}{2k}]$ we
have%
\[
x=\int_{0}^{x}ds=\int_{0}^{u_{\widetilde{\lambda}}\left(  x\right)  }\frac
{du}{\sqrt{2\widetilde{\lambda}\left(  E_{k}^{+}\left(  \widetilde{\lambda
}\right)  -F\left(  u\right)  \right)  }},
\]
so (\ref{INeqSQRT}) yields%
\begin{align*}
x  &  =\int_{0}^{u_{\lambda_{2}}\left(  x\right)  }\frac{du}{\sqrt
{2\lambda_{2}\left(  E_{k}^{+}\left(  \lambda_{2}\right)  -F\left(  u\right)
\right)  }}=\int_{0}^{u_{\lambda_{1}}\left(  x\right)  }\frac{du}%
{\sqrt{2\lambda_{1}\left(  E_{k}^{+}\left(  \lambda_{1}\right)  -F\left(
u\right)  \right)  }}\\
&  >\int_{0}^{u_{\lambda_{1}}\left(  x\right)  }\frac{du}{\sqrt{2\lambda
_{2}\left(  E_{k}^{+}\left(  \lambda_{2}\right)  -F\left(  u\right)  \right)
}},\text{ if }x\in(0,\frac{1}{2k}].
\end{align*}
Thus, $u_{\lambda_{1}}\left(  x\right)  <u_{\lambda_{2}}\left(  x\right)  ,$
for all $x\in(0,\frac{1}{2k}]$. By symmetry we obtain that the inequality is
true in $\left(  0,\frac{1}{k}\right)  $.

Repeating the same argument in the other subintervals we get that%
\[
\left\vert u_{\lambda_{1}}\left(  x\right)  \right\vert <\left\vert
u_{\lambda_{2}}\left(  x\right)  \right\vert \text{ for all }x\in\left(
0,1\right)  ,\ x\not =\frac{j}{k},\ j=1,...k-1.
\]
This implies that $\left\Vert u_{\lambda_{1}}\right\Vert _{L^{p}}<\left\Vert
u_{\lambda_{2}}\right\Vert _{L^{p}}$ for any $p\geq1.$
\end{proof}

\begin{remark}
The statements in Lemmas \ref{Increasing}-\ref{Increasing2} are also true for
$v_{k,\widetilde{\lambda}}^{-},$ because $v_{k,\widetilde{\lambda}}^{-}\left(
x\right)  =v_{k,\widetilde{\lambda}}^{+}\left(  1-x\right)  $, so the
$H_{0}^{1}$ and $L^{p}$ norms of $v_{k,\widetilde{\lambda}}^{-}$ and
$v_{k,\widetilde{\lambda}}^{+}$ are the same.
\end{remark}

\subsection{Nonlocal fixed points}

Although in this paper we are mainly interested in problem (\ref{problem1}),
we will study the existence of stationary points for an elliptic problem with
a more general nonlocal term than in (\ref{elliptic}). Namely, let us consider
the following problem:%
\begin{equation}
\left\{
\begin{array}
[c]{c}%
-a\left(  l(u)\right)  u_{xx}=\lambda f\left(  u\right)  ,\ 0<x<1,\\
u\left(  0\right)  =u(1)=0,
\end{array}
\right.  \label{FixedNonlocal}%
\end{equation}
where%
\[
l\left(  u\right)  =\left\Vert u\right\Vert _{H_{0}^{1}}^{r}\text{ or
}\left\Vert u\right\Vert _{L^{p}}^{r},\ p\geq1\text{, }r>0.
\]

Let%
\[
d_{k}=\sup\{d:\lambda>a\left(  \overline{d}\right)  \pi^{2}k^{2}\text{
}\forall\overline{d}\leq d\}.
\]
Then for any $d<d_{k}$ there exists the fixed point $u_{k}^{d}$ of
(\ref{FixedChaffee}), where $u_{k}^{d}$ is either equal to $u_{k}^{+}$ or
$u_{k}^{-}.$

It is obvious that any solution of (\ref{FixedNonlocal}) is a solution of
(\ref{FixedChaffee})\ with $d=l\left(  u\right)  .$ Therefore, all the
solutions to problem (\ref{FixedNonlocal}) have to be solutions $u_{k}^{d}$ to
problem (\ref{FixedChaffee}) for a suitable $d.$

\begin{theorem}
\label{FixedPointsExist}Assume conditions (A1)-(A6) and, additionally, that%
\begin{equation}
a\left(  0\right)  \pi^{2}k^{2}<\lambda. \label{CondLambda}%
\end{equation}
Then:

\begin{itemize}
\item For any $1\leq j\leq k$ there exists $d_{j}^{\ast}<d_{k}$ such that
$u_{j}^{d_{j}^{\ast}}$ is a fixed point of problem (\ref{FixedNonlocal}).

\item If $\lambda\leq a\left(  0\right)  \pi^{2}\left(  k+1\right)  ^{2}$ and
$a(0)=\min_{s\geq0}\{a\left(  s\right)  \}$, there are no fixed points for
$j>k.$

\item If $N\geq k$ is the first integer such that $\lambda\leq\inf_{s\geq
0}\{a\left(  s\right)  \pi^{2}\left(  N+1\right)  ^{2}\}$, there are no fixed
points for $j>N.$

\item If $l\left(  u\right)  =\left\Vert u\right\Vert _{H_{0}^{1}}^{r}$,
$\lambda\leq a\left(  0\right)  \pi^{2}\left(  k+1\right)  ^{2}$ and $a$ is
non-decreasing, there are exactly $2k+1$ solutions to problem
(\ref{FixedNonlocal}):\ $0,\ u_{1,d_{1}^{\ast}}^{\pm},...,u_{k,d_{k}^{\ast}%
}^{\pm}.$

\item If $l\left(  u\right)  =\left\Vert u\right\Vert _{L^{p}}^{r}$,
$\lambda\leq a\left(  0\right)  \pi^{2}\left(  k+1\right)  ^{2}$, $f$ is odd
and $a$ is non-decreasing, there are exactly $2k+1$ solutions to problem
(\ref{FixedNonlocal}):\ $0,\ u_{1,d_{1}^{\ast}}^{\pm},...,u_{k,d_{k}^{\ast}%
}^{\pm}.$
\end{itemize}
\end{theorem}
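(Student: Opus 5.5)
The plan is to reduce everything to a scalar fixed-point problem in the parameter $d$. For each $j\le k$, the Chafee--Infante solution $u_j^d$ of (\ref{FixedChaffee}) exists exactly when $\widetilde\lambda=\lambda/a(d)>\pi^2 j^2$. A function $u_j^d$ solves (\ref{FixedNonlocal}) if and only if $d=l(u_j^d)$. So I will consider the map
\[
\psi_j(d)=l(u_j^{d}) - d,\qquad d\in[0,d_j),
\]
where $d_j\ge d_k$ is defined like $d_k$ with $j$ in place of $k$. I will argue by an intermediate value argument on $\psi_j$.

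\textbf{Existence (first item).} By (\ref{CondLambda}), $\lambda/a(0)>\pi^2k^2\ge \pi^2 j^2$, so $u_j^0$ exists and is nontrivial, giving $\psi_j(0)=l(u_j^0)>0$. Next I need continuity of $d\mapsto u_j^d$ in $H_0^1$ (hence in $L^p$). This follows from continuity of $a$ together with continuous dependence of the Chafee--Infante branch on $\widetilde\lambda$. I would obtain the latter from the time-map description (\ref{Roots}): $E_j^+(\widetilde\lambda)$ is the unique root of a strictly monotone continuous equation, so it depends continuously on $\widetilde\lambda$.

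For the other endpoint there are two cases. If $d_k=+\infty$, then $u_j^d$ is bounded in $H_0^1$ uniformly in $d$: by (A5)/(A6) and the energy identity (\ref{Level}), all Chafee--Infante solutions with $\widetilde\lambda\le \lambda/m$ are a priori bounded. Hence $\psi_j(d)<0$ for large $d$. If $d_k<\infty$, then as $d\uparrow d_k$ we have $\lambda/a(d)\downarrow$ toward a value $\le\pi^2k^2$, since $\lambda\le a(d_k)\pi^2k^2$ by the definition of the supremum and continuity of $a$. For $j=k$ the branch $u_k^d$ bifurcates from $0$ at $\widetilde\lambda=\pi^2k^2$, so $l(u_k^d)\to0$ and $\psi_k(d)\to -d_k<0$. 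For $j<k$ the solution $u_j^d$ still exists at $d=d_k$, so I will instead look at $\psi_j$ on $[0,d_j)$ and use the same dichotomy with $d_j$. In every case the intermediate value theorem yields $d_j^*$ with $\psi_j(d_j^*)=0$. To get $d_j^*<d_k$ as stated, I expect to choose the first zero and compare with the $k$ case; this bookkeeping is where some care is needed. The step showing $\|u_k^d\|\to0$ at the bifurcation point (e.g.\ $E_k^+(\widetilde\lambda)\to0$ as $\widetilde\lambda\downarrow\pi^2k^2$, read off from (\ref{Roots}) using (A3)) is the main technical obstacle.

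\textbf{Nonexistence (second and third items).} A fixed point $u_j^d$ with $j>k$ would require $\lambda/a(d)>\pi^2 j^2\ge\pi^2(k+1)^2$. If $a(0)$ is the minimum of $a$, this gives $\lambda>a(0)\pi^2(k+1)^2$, a contradiction. The third item is identical, using $\inf a$ and $N$.

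\textbf{Uniqueness (fourth and fifth items).} If $a$ is non-decreasing, then $d\mapsto\widetilde\lambda=\lambda/a(d)$ is non-increasing. By Lemma \ref{Increasing} (respectively Lemma \ref{Increasing2} when $f$ is odd, in the $L^p$ case), $d\mapsto l(u_j^d)$ is non-increasing, so $\psi_j$ is strictly decreasing. Hence $\psi_j$ has exactly one zero for each $j\le k$ and each sign $\pm$; the remark after Lemma \ref{Increasing2} covers the $-$ branch. Combining this with the nonexistence for $j>k$ (valid since $a(0)=\min a$ when $a$ is non-decreasing) and the trivial solution $0$ gives exactly $2k+1$ solutions.
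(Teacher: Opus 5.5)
Your overall strategy matches the paper's: the intermediate value theorem for $g(d)=l(u_j^d)$ against the diagonal, an a priori $H_0^1$ bound when $d_k=+\infty$, vanishing of $u_k^d$ as $d\to d_k$, and Lemmas \ref{Increasing}--\ref{Increasing2} for uniqueness. The nonexistence items are handled identically.

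\textbf{The step that fails.} The ``bookkeeping'' you plan in order to get $d_j^*<d_k$ for $j<k$ cannot be done, because that inequality is false in general. Take $0<a_0<\lambda/(k^2\pi^2)$, $\eta>0$, and
$a(s)=a_0+\bigl(\lambda/(k^2\pi^2)-a_0\bigr)\min\{s/\eta,1\}$, which is continuous, bounded and non-decreasing. Then $d_k=\eta$. For $j<k$ we have $\lambda/a(d)\ge k^2\pi^2>j^2\pi^2$ for every $d$. By Lemma \ref{Increasing} this gives $l(u_j^d)\ge c_j:=\|v_{j,k^2\pi^2}^+\|_{H_0^1}^r>0$, and $c_j$ does not depend on $\eta$. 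Any fixed point therefore satisfies $d_j^*=l(u_j^{d_j^*})\ge c_j>\eta=d_k$ as soon as $\eta<c_j$. What your argument gives is $d_j^*<d_j$, and this is also what the paper proves: its proof says it suffices to treat $j=k$, i.e.\ it runs the argument with $j$ in place of $k$. So state and prove $d_j^*<d_j$ and drop the comparison with the $k$ case.

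\textbf{The rest is sound and differs only locally.} As $d\to d_k<\infty$, continuity of $a$ and the definition of $d_k$ give $\lambda/a(d)\to\pi^2k^2$ exactly, though not necessarily monotonically. To show $l(u_k^d)\to0$ there, the paper argues by compactness: an $H^2$ bound, $C^1$ convergence of a subsequence, the limit solving (\ref{FixedChaffee}) with $\lambda/a(d_k)=k^2\pi^2$, and zero counting (simple zeros persist under $C^1$ convergence) force the limit to be $0$.

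Your time-map route also works and is more quantitative:
\begin{itemize}
\item $\tau_\pm^{\widetilde\lambda}(E)\to\pi/\sqrt{2\widetilde\lambda}$ as $E\to0$, using (A3);
\item $\tau_\pm$ is strictly increasing in $E$, using (A4), so $E_k^+(\widetilde\lambda)\to0$;
\item then $|u'|\le\sqrt{2\widetilde\lambda E}$ from (\ref{Level}).
\end{itemize}
It relies on strict monotonicity of the time maps, whereas the compactness argument only needs the classification of solutions at the critical value.

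Your explicit proof that $d\mapsto u_j^d$ is continuous in $H_0^1$ fills in a point the paper's intermediate value step uses without comment. That proof combines continuity of the root $E_j^+$ with continuous dependence for (\ref{IV}), which is legitimate because (A3)--(A4) make $f$ locally Lipschitz.
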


\begin{proof}
For the first statement, it is enough to prove the result for $j=k$. By
condition (\ref{CondLambda}) we have that $d_{k}\in(0,+\infty]$.

Consider first the case where $d_{k}$ is finite. We need to obtain the
existence of $d_{k}^{\ast}<d_{k}$ such that $l\left(  u_{k}^{d_{k}^{\ast}%
}\right)  =d_{k}^{\ast}$. When $d=0$ it is clear that $l\left(  u_{k}%
^{0}\right)  >0$. Also, we know that $l\left(  u_{k}^{d_{k}}\right)  =0$.
Multiplying (\ref{FixedChaffee}) by $u_{k}^{d}$ and using (\ref{condf}), (A6)
and the Poincar\'{e} inequality we obtain%
\[
\left\Vert \left(  u_{k}^{d}\right)  ^{\prime}\right\Vert _{L^{2}}^{2}%
\leq\frac{\lambda}{a\left(  d\right)  }\left(  f\left(  u_{k}^{d}\right)
,u_{k}^{d}\right)  \leq\frac{\lambda}{m}\left(  m_{\varepsilon}+\varepsilon
\left\Vert u_{k}^{d}\right\Vert _{L^{2}}^{2}\right)  \leq K_{1}+\frac{1}%
{2}\left\Vert \left(  u_{k}^{d}\right)  ^{\prime}\right\Vert _{L^{2}}^{2},
\]
so, by using the embedding $H_{0}^{1}\left(  \Omega\right)  \subset L^{\infty
}\left(  \Omega\right)  ,$ $l\left(  u_{k}^{d}\right)  $ is bounded in $d$.
This implies that the function $g\left(  d\right)  =l\left(  u_{k}^{d}\right)
$ has to intersect the line $y\left(  d\right)  =d$ at some point $d_{k}%
^{\ast}$. It remains to check that $d_{k}^{\ast}<d_{k}$. For this aim we prove
first that $u_{k}^{d}\underset{d\rightarrow d_{k}}{\rightarrow}0$ strongly in
$H_{0}^{1}\left(  \Omega\right)  $. Indeed, as $u_{k}^{d}$ is bounded in
$H_{0}^{1}\left(  \Omega\right)  $, there exist $v$ and a sequence
$\{u_{k}^{d_{j}}\}$ such that $u_{k}^{d_{j}}\rightarrow v$ in $L^{2}\left(
\Omega\right)  $. The embedding $H_{0}^{1}\left(  \Omega\right)  \subset
C\left(  [0,1]\right)  $ and the continuity of the function $f\left(
u\right)  $ imply that $\{f(u_{k}^{d_{j}})\}$ is bounded in $C([0,1])$, so
from%
\[
\left\Vert \left(  u_{k}^{d_{j}}\right)  ^{\prime\prime}\right\Vert _{L^{2}%
}\leq\frac{\lambda}{a\left(  d_{j}\right)  }\left\Vert f\left(  u_{k}^{d_{j}%
}\right)  \right\Vert _{L^{2}}\leq\frac{\lambda}{m}\left\Vert f\left(
u_{k}^{d_{j}}\right)  \right\Vert _{L^{2}}\leq C
\]
we deduce that $\{u_{k}^{d_{j}}\}$ is bounded in $H^{2}\left(  \Omega\right)
$. Hence, $u_{k}^{d_{j}}\rightarrow v$ in $H_{0}^{1}\left(  \Omega\right)  $
and $C^{1}([0,1])$. Also, $f(u_{k}^{d_{j}})\rightarrow f(v)$ in $C\left(
[0,1]\right)  $. Therefore, for any $\psi\in H_{0}^{1}\left(  \Omega\right)  $
we have that%
\[%
\begin{array}
[c]{ccc}%
\left(  \left(  u_{k}^{d_{j}}\right)  ^{\prime},\psi^{\prime}\right)   & = &
\frac{\lambda}{a\left(  d_{j}\right)  }\left(  f\left(  u_{k}^{d_{j}}\right)
,\psi\right)  \\
\downarrow &  & \downarrow\\
\left(  v^{\prime},\psi^{\prime}\right)   & = & \frac{\lambda}{a\left(
d_{k}\right)  }\left(  f\left(  v\right)  ,\psi\right)  ,
\end{array}
\]
which implies that $v$ is a solution to problem (\ref{FixedChaffee}) with
$d=d_{k}$. But from $u_{k}^{d_{j}}\rightarrow v$ in $C^{1}([0,1])$ it follows
that $v$ cannot be a point with less than $k+1$ simple zeros in $[0,1]$ and
then $\lambda/a\left(  d_{k}\right)  =k^{2}\pi^{2}$ implies that $v\equiv0$.
As the limit is the same for every converging subsequence, $u_{k}%
^{d}\underset{d\rightarrow d_{k}}{\rightarrow}0$ strongly in $H_{0}^{1}\left(
\Omega\right)  $. Thus, $d_{k}>0$ and $\lim_{d\rightarrow d_{k}}\ \left\Vert
\left(  u_{k}^{d}\right)  ^{\prime}\right\Vert _{L^{2}}=0$ imply that
$d_{k}^{\ast}<d_{k}$.

Second, let $d_{k}=+\infty$. Then the existence of $d_{k}^{\ast}<+\infty$
follows by the same argument as before.

The second and third statements are a consequence of%
\[
\lambda\leq a\left(  0\right)  \pi^{2}\left(  k+1\right)  ^{2}\leq a\left(
d\right)  \pi^{2}\left(  k+1\right)  ^{2}\text{ for any }d\geq0
\]
and%
\[
\lambda\leq\inf_{s\geq0}\{a\left(  s\right)  \}\pi^{2}\left(  N+1\right)
^{2}\leq a\left(  d\right)  \pi^{2}\left(  N+1\right)  ^{2}\text{ for any
}d\geq0\text{,}%
\]
respectively, because in such a case for problem (\ref{FixedChaffee}) the
fixed points $v_{j}^{\pm}$, $j>k$ (respectively $j>N$), do not exist.

The last two statements are a consequence of the first two statements and of
the fact that the points of intersection of the functions $g\left(  d\right)
=l\left(  u_{k}^{d}\right)  $ and $y\left(  d\right)  =d$ has to be unique,
because if $a$ is non-decreasing, then $g(d)$ is non-increasing by Lemmas
\ref{Increasing} and \ref{Increasing2}.
\end{proof}

\bigskip

In view of this theorem, we have exactly the same equilibria and bifurcations
as in the classical Chafee-Infante equation (see \cite{Henry85},
\cite{CabCarvMarVal}) when the function $a(d)$ is non-decreasing, because in
this case in view of the monotone dependence between the functions $a(d)$ and
$g(d)$, there is only one intersection point of the function $g\left(
d\right)  $ with the bisector, as it is shown in Figure \ref{fig1}. This
follows from the fact that $g(d)-d$ is strictly decreasing, but there may be
weaker conditions on $a\left(  \text{\textperiodcentered}\right)  $ that would
lead $g(d)-d$ to be strictly decreasing.

When the function $a\left(  \text{\textperiodcentered}\right)  $ is not
assumed to be monotone, an interesting situation appears. More precisely, it
is possible to have more than two equilibria with the same number of zeros. If
$l\left(  u\right)  =\left\Vert u\right\Vert _{H_{0}^{1}}^{2}$, for the
equilibria with $k+1$ zeros in $[0,1]$ this happens when the equation
\begin{equation}
d=\int_{0}^{1}\left\vert \frac{du_{k}^{d}(x)}{dx}\right\vert ^{2}%
dx=g(d)\label{condexist}%
\end{equation}
has more than one solution. For instance, if $a(0)=a(\overline{d})$ for some
$0<\bar{d}<g(0)$, then $g(0)=g(\overline{d})$. Assuming that there are
$0<d_{k}^{1}<d_{k}^{2}<$ $\overline{d}$ such that $a(d_{k}^{2})=a(d_{k}%
^{1})=\frac{\lambda}{\pi^{2}k^{2}},$ there must exist $0<d_{1}^{\ast}%
<d_{k}^{1}<d_{k}^{2}<d_{2}^{\ast}<\overline{d}$ such that $g(d_{i}^{\ast
})=d_{i}^{\ast}$. Now, by the argument in Theorem \ref{FixedPointsExist},
there must exist a $d_{3}^{\ast}>\overline{d}$ such that $g(d_{3}^{\ast
})=d_{3}^{\ast}$, obtaining six fixed points with $k+1$ zeros in $[0,1]$. This
situation is shown in Figure \ref{fig2}, where $d_{1}^{\ast},d_{2}^{\ast}$ and
$d_{3}^{\ast}$ are solutions of (\ref{condexist}), that is, there are three
intersection points with the bisector. We notice that when $a(d)>\lambda
/(\pi^{2}k^{2})$, the function $g(d)$ is not defined since the condition for
such equilibria to exist is not satisfied, but we can make this function
continuous by putting $g\left(  d\right)  =0$ whenever $a(d)\geq\lambda
/(\pi^{2}k^{2})$. This procedure establishes that, having fixed a natural
number $k$, for any $j\in\mathbb{N}$ we may construct $a\left(
\text{\textperiodcentered}\right)  $ in such a way that we have $2(2j+1)$
equilibria with $k+1$ zeros in $[0,1]$.

At least there is always one intersection point with the bisector, but the
function $g(d)$ could be even tangent to the bisector at some point or not cut
it again.

\begin{figure}[h]
\begin{minipage}[b]{0.5\linewidth}
\centering
\includegraphics[width=7cm, height=7.75cm]{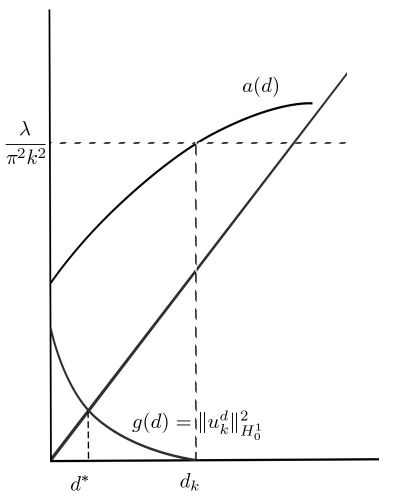}
\caption{$a(d)$ non-decreasing}
\label{fig1}
\end{minipage}
\hspace{0.1cm} \begin{minipage}[b]{0.5\linewidth}
\centering
\includegraphics[width=7.33cm, height=7.75cm]{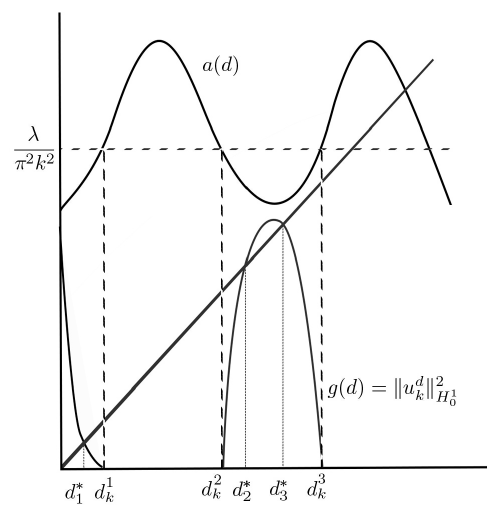}
\caption{$a(d)$ whatever}
\label{fig2}
\end{minipage}
\end{figure}

\subsection{Lap number and some forbidden connections}

With Theorem \ref{FixedPointsExist} at hand we can improve the description of
the global attractor given in Theorem \ref{theorem12}.

Under conditions (A1)-(A6), (A8) and $h\equiv0$, if
\begin{equation}
a\left(  0\right)  \pi^{2}n^{2}<\lambda\leq a\left(  0\right)  \pi^{2}\left(
n+1\right)  ^{2}\label{lambda}%
\end{equation}
then problem (\ref{problem1})\ possesses exactly $2n+1$ fixed points:\ $v_{0}%
=0,\ u_{1,d_{1}^{\ast}}^{\pm},...,u_{n,d_{n}^{\ast}}^{\pm}.$

Let $\phi$ be a bounded complete trajectory. We know by Theorem
\ref{theorem12} that%
\[
dist_{H_{0}^{1}}(\phi\left(  t\right)  ,\mathfrak{R})\rightarrow0\text{, as
}t\rightarrow\pm\infty.
\]
As the number of fixed points is finite, we will prove that in fact the
solution has to converge to one fixed point forwards and backwards. We recall
the omega and alpha limit sets of $\phi$, given by%
\begin{align*}
\omega\left(  \phi\right)   &  =\{y:\exists t_{n}\rightarrow+\infty\text{ such
that }\phi\left(  t_{n}\right)  \rightarrow y\},\\
\alpha\left(  \phi\right)   &  =\{y:\exists t_{n}\rightarrow-\infty\text{ such
that }\phi\left(  t_{n}\right)  \rightarrow y\},
\end{align*}
are non-empty, compact and connected \cite[Lemma 3.4 and Proposition
4.1]{Ball}. Also, $dist_{H_{0}^{1}}\left(  \phi\left(  t\right)
,\omega\left(  \phi\right)  \right)  \underset{t\rightarrow+\infty
}{\rightarrow}0,\ dist_{H_{0}^{1}}\left(  \phi\left(  t\right)  ,\alpha\left(
\phi\right)  \right)  \underset{t\rightarrow-\infty}{\rightarrow}0$. Since
$\omega\left(  \phi\right)  ,\alpha\left(  \phi\right)  \subset\mathfrak{R}$
and $\mathfrak{R}$ is finite, the only possibility is that $\omega\left(
\phi\right)  =z_{1}\in\mathfrak{R}$, $\alpha\left(  \phi\right)  =z_{2}%
\in\mathfrak{R}$.

Thus, we have established the following result.

\begin{theorem}
\label{Structure2}Let assume conditions (A1)-(A6), (A8), (\ref{lambda}) and
$h\equiv0$. Then
\[
\mathcal{A=\cup}_{k=0}^{2n+1}M^{+}\left(  v_{k}\right)  =\mathcal{\cup}%
_{k=0}^{2n+1}M^{-}\left(  v_{k}\right)  ,
\]
where $n$ is given in (\ref{lambda}) and $v_{0}=0$, $v_{1}=u_{1,d_{1}^{\ast}%
}^{+},\ v_{2}=u_{1,d_{1}^{\ast}}^{-},...$

In other words, the global attractor $\mathcal{A}$ consists of the set of
stationary points $\mathfrak{R}$ (which has $2n+1$ elements) and the bounded
complete trajectories that connect them (the heteroclinic connections).
\end{theorem}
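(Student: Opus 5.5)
The plan is to assemble Theorem \ref{Structure2} from three ingredients: the count of equilibria, the characterization in Theorem \ref{theorem12}, and the $\omega$/$\alpha$-limit argument given just before the statement.

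First, I identify $\mathfrak{R}$. Under (A1)--(A6), (A8) and $h\equiv0$, Lemma \ref{lemma22} says the fixed points of $\mathcal{R}$ are exactly the solutions of (\ref{elliptic}). Condition (\ref{lambda}) gives (\ref{CondLambda}) with $k=n$ and also $\lambda\le a(0)\pi^{2}(n+1)^{2}$. By (A8), $a(0)=\min_{s\geq0}a(s)$, so the fourth item of Theorem \ref{FixedPointsExist}, with $l(u)=\Vert u\Vert_{H_0^1}^{2}$ (i.e. $r=2$), applies. Hence $\mathfrak{R}=\{v_0,\dots,v_{2n}\}$ is finite with $2n+1$ elements; the index range in the statement should be read accordingly.

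Second, I take $z\in\mathcal{A}$. By (\ref{attractor}) there is $\phi\in\mathbb{K}$ with $\phi(0)=z$, and Theorem \ref{theorem12} gives $dist_{H_0^1}(\phi(t),\mathfrak{R})\to0$ as $t\to\pm\infty$. The limit sets $\omega(\phi)$ and $\alpha(\phi)$ are nonempty, compact and connected by \cite{Ball}; the hypotheses there are satisfied thanks to (K1)--(K4), Corollary \ref{PropK4} and the compactness of $\mathcal{A}$ in $H_0^1$. Both limit sets lie in $\mathfrak{R}$, because every limit point has zero distance to the closed set $\mathfrak{R}$. A connected subset of a finite set is a singleton, so $\omega(\phi)=\{v_i\}$ and $\alpha(\phi)=\{v_j\}$. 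Since $\phi(t)$ approaches its $\omega$-limit set, $\phi(t)\to v_i$ as $t\to+\infty$, so $z\in M^{+}(v_i)$. Likewise $\phi(t)\to v_j$ as $t\to-\infty$, so $z\in M^{-}(v_j)$.

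Third, the reverse inclusions are immediate. Every $M^{\pm}(v_k)$ is contained in $M^{\pm}(\mathfrak{R})=\mathcal{A}$ by Theorem \ref{theorem12}; for $M^{-}$ this also uses the Remark that $\mathbb{F}$ may be replaced by $\mathbb{K}$. The description of $\mathcal{A}$ as the equilibria together with their heteroclinic connections is then a restatement: each point lies either on a constant trajectory or on a bounded complete trajectory running from $v_j$ to $v_i$.

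The only step needing care is justifying connectedness of $\omega(\phi)$ and $\alpha(\phi)$ in the multivalued setting. If citing \cite{Ball} feels insufficient, I would argue directly. Suppose $\omega(\phi)$ contained two distinct equilibria. Then, by continuity of $t\mapsto\phi(t)$ in $H_0^1$ and compactness of $\mathcal{A}$, the trajectory would cross infinitely often an annulus separating them. This yields a limit point at positive distance from $\mathfrak{R}$, contradicting Theorem \ref{theorem12}. The same argument works as $t\to-\infty$.
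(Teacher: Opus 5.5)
Your proposal is correct and follows the paper's argument: you count the $2n+1$ equilibria via Theorem \ref{FixedPointsExist}, apply Theorem \ref{theorem12} to an arbitrary bounded complete trajectory, and use that $\omega(\phi)$ and $\alpha(\phi)$ are connected subsets of the finite set $\mathfrak{R}$ (citing \cite{Ball}), hence singletons. Your additions are all sound: the explicit reverse inclusions, the observation that the union should run over $2n+1$ indices rather than $2n+2$, and the elementary annulus argument that can replace the citation for connectedness.
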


\begin{remark}
\label{Homoclinic}As the Lyapunov function (\ref{Lyapunov}) is strictly
decreasing along a trajectory $\phi$ which is not a fixed point, then there
cannot exist homoclinic connections for any fixed point. This implies in
particular that if $n=0$, then $\mathcal{A}=\{0\}.$
\end{remark}

\begin{remark}
If we use condition (A7) instead of (A8), then we cannot guarantee that the
number of fixed points is finite. But if we suppose that this is the case,
then the result remains valid. In this situation, there could be more than two
fixed points with the same number of zeros.
\end{remark}

\begin{lemma}
\label{NoConnection1}Let assume conditions (A1)-(A6), $h\equiv0$ and either
(A7) or (A8). Let $u_{k,d_{k}^{\ast}}^{+},u_{k,d_{k}^{\ast}}^{-}$ be a pair of
fixed points corresponding to the same value $d_{k}^{\ast}$. Then there cannot
be an heteroclinic connection between them.
\end{lemma}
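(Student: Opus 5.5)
The plan is to use the Lyapunov function $E$ from (\ref{Lyapunov}) with $h\equiv0$, namely
\[
E(y)=\frac{1}{2}A(\Vert y\Vert_{H_{0}^{1}}^{2})-\lambda\int_{\Omega}F(y(x))\,dx ,
\]
and show that the two fixed points $u^{+}:=u_{k,d_{k}^{\ast}}^{+}$ and $u^{-}:=u_{k,d_{k}^{\ast}}^{-}$ have the same energy. Once this is done, the argument is short. Along any complete trajectory $\phi$ the energy equality from the proof of Theorem \ref{theorem12} gives
\[
E(\phi(t))+\int_{s}^{t}\Vert\phi_{r}\Vert_{L^{2}}^{2}\,dr=E(\phi(s)) ,
\]
so $E(\phi(t))$ is non-increasing. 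Suppose $\phi$ is a heteroclinic connection with $\phi(t)\to u^{\pm}$ as $t\to-\infty$ and $\phi(t)\to u^{\mp}$ as $t\to+\infty$ in $H_{0}^{1}$. By continuity of $E$ on $H_{0}^{1}$, letting $s\to-\infty$ and $t\to+\infty$ yields
\[
\int_{\mathbb{R}}\Vert\phi_{r}\Vert_{L^{2}}^{2}\,dr=E(u^{\pm})-E(u^{\mp})=0 .
\]
Hence $\phi$ is constant in time, which contradicts $u^{+}\neq u^{-}$. This is exactly the argument of Remark \ref{Homoclinic}, applied to two distinct points at the same energy level.

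The key step is therefore $E(u^{+})=E(u^{-})$. Both functions solve (\ref{FixedChaffee}) with the same $d=d_{k}^{\ast}$, so both are solutions of (\ref{Chafee2}) with the same $\widetilde{\lambda}=\lambda/a(d_{k}^{\ast})$. By construction, $\Vert u^{\pm}\Vert_{H_{0}^{1}}^{2}=d_{k}^{\ast}$ for both, so the term $\frac{1}{2}A(d_{k}^{\ast})$ is common to them. It remains to check that $\int_{0}^{1}F(u^{+})\,dx=\int_{0}^{1}F(u^{-})\,dx$. I would use the remark following Lemma \ref{Increasing2}, which gives $u^{-}(x)=u^{+}(1-x)$. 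The change of variables $x\mapsto1-x$ then gives equality of the $F$-integrals directly, and this also confirms once more that the $H_{0}^{1}$ norms agree.

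The main obstacle is justifying the reflection identity $u^{-}(x)=u^{+}(1-x)$ without oddness of $f$. I would argue as follows. The map $x\mapsto u^{+}(1-x)$ solves (\ref{Chafee2}) with the same $\widetilde{\lambda}$ and has the same $k+1$ simple zeros. By the uniqueness-count result of \cite{CabCarvMarVal}, only two solutions have $k+1$ zeros, so it must be $u^{+}$ or $u^{-}$. Its sign near $0$ equals the sign of $u^{+}$ near $1$. When $k$ is odd, $u^{+}$ has an even number $k$ of interior sign intervals (here $k$ sub-intervals), so its sign at $1$ is opposite to its sign at $0$, and the reflection is $u^{-}$. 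When $k$ is even the reflection has the same starting sign, so it would coincide with $u^{+}$; in that case I would instead compare energies directly via the first integral (\ref{Level}). Writing each half-lap's contribution to $\int F$ and to $\Vert u'\Vert^{2}$ in terms of $E_{k}^{\pm}$, $U_{\pm}$ and the counts of positive and negative laps, I would show that equality of $\widetilde{\lambda}$ and of $d_{k}^{\ast}$ forces equality of the energies. If this last route fails I would simply fall back on the paper's stated relation $v^{-}_{k}(x)=v^{+}_{k}(1-x)$.
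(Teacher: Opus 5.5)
Your Lyapunov argument (equal energies at $u^{\pm}$ plus strict decrease of $E$ along non-stationary trajectories) is exactly the paper's and is fine. The gap is in the key step $E(u^{+})=E(u^{-})$.

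\textbf{The parity count is inverted.} Write the zeros as $0=x_{0}<\dots<x_{k}=1$. Then $u^{+}$ has sign $(-1)^{j-1}$ on $(x_{j-1},x_{j})$, so its last lap has sign $(-1)^{k-1}$. Hence $x\mapsto u^{+}(1-x)$ starts negative, and so equals $u^{-}$, exactly when $k$ is \emph{even}. When $k$ is odd it starts positive and is $u^{+}$ itself; for $k=1$, $u^{+}(1-x)>0$ cannot be $u^{-}$. You therefore invoke the reflection precisely in the case where it fails. Your fallback $v_{k}^{-}(x)=v_{k}^{+}(1-x)$ is also false for odd $k$. Note that the paper's proof uses this identity for every $k$, so as written it has the same gap.

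\textbf{The first integral does not rescue odd $k$ either.} Let $E_{k}^{\pm}$ denote the first-integral levels. Integrating (\ref{Level}) over $[0,1]$ gives $\int_{0}^{1}F(u)\,dx=E_{k}-\frac{1}{2\widetilde{\lambda}}\Vert u'\Vert_{L^{2}}^{2}$. With the common $\widetilde{\lambda}=\lambda/a(d_{k}^{\ast})$ and common norm $d_{k}^{\ast}$,
\[
E(u^{\pm})=\tfrac{1}{2}A(d_{k}^{\ast})+\tfrac{1}{2}a(d_{k}^{\ast})\,d_{k}^{\ast}-\lambda E_{k}^{\pm},
\]
so equal energies is equivalent to $E_{k}^{+}=E_{k}^{-}$.
\begin{itemize}
\item For $k=2m$, both levels solve the same equation $m\tau_{+}+m\tau_{-}=1/\sqrt{2}$ from (\ref{Roots}). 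So your first-integral idea works there, but that is the case the reflection already handles.
\item For $k=2m-1$, the levels solve the two different equations $m\tau_{+}+(m-1)\tau_{-}=1/\sqrt{2}$ and $m\tau_{-}+(m-1)\tau_{+}=1/\sqrt{2}$. The single constraint that the two norms agree does not force these roots to coincide when $f$ is not odd. For instance, for $k=1$ one has $E_{k}^{\pm}=|(u^{\pm})'(0)|^{2}/(2\widetilde{\lambda})$, and $u^{+}$, $u^{-}$ depend only on the positive and negative branches of $f$ respectively.
\end{itemize}

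\textbf{What closes the gap in the setting where the lemma is used.} In Section 6, where the lemma is applied to $M_{1},\dots,M_{k}$, $f$ is odd. Then $-u^{+}$ solves the same problem with the same norm, the same zeros and a negative first lap, so $u^{-}=-u^{+}$. Since $F$ is even, $E(u^{-})=E(u^{+})$ for every $k$. For odd $k$ with $f$ not odd, neither the reflection nor an energy-level comparison gives equal energies, so a different argument or an extra hypothesis is needed there.
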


\begin{proof}
The function $v\left(  x\right)  =u_{k,d_{k}^{\ast}}^{+}\left(  1-x\right)  $
is a fixed point corresponding to $d_{k}^{\ast}$ as%
\[
-\frac{\partial^{2}v}{\partial x^{2}}\left(  x\right)  =-\frac{\partial
^{2}u_{k,d_{k}^{\ast}}^{+}}{\partial x^{2}}\left(  1-x\right)  =\frac{\lambda
}{a\left(  d_{k}^{\ast}\right)  }f\left(  u_{k,d_{k}^{\ast}}^{+}\left(
1-x\right)  \right)  =\frac{\lambda}{a\left(  d_{k}^{\ast}\right)  }f\left(
v\left(  x\right)  \right)  ,
\]
so $u_{k,d_{k}^{\ast}}^{-}\left(  x\right)  =v\left(  x\right)  =u_{k,d_{k}%
^{\ast}}^{+}\left(  1-x\right)  $. The equalities%
\[
\int_{0}^{1}\left(  \frac{\partial u_{k,d_{k}^{\ast}}^{-}}{\partial x}\left(
x\right)  \right)  ^{2}dx=\int_{0}^{1}\left(  \frac{\partial u_{k,d_{k}^{\ast
}}^{+}}{\partial x}\left(  1-x\right)  \right)  ^{2}dx=\int_{0}^{1}\left(
\frac{\partial u_{k,d_{k}^{\ast}}^{+}}{\partial x}\left(  y\right)  \right)
^{2}dy,
\]%
\[
\int_{0}^{1}\int_{0}^{u_{d_{k}^{\ast}}^{-}\left(  x\right)  }f\left(
s\right)  dsdx=\int_{0}^{1}\int_{0}^{u_{d_{k}^{\ast}}^{+}\left(  1-x\right)
}f\left(  s\right)  dsdx=\int_{0}^{1}\int_{0}^{u_{d_{k}^{\ast}}^{+}\left(
y\right)  }f\left(  s\right)  dsdy
\]
imply that $E(u_{k,d_{k}^{\ast}}^{-})=E\left(  u_{k,d_{k}^{\ast}}^{+}\right)
$, where $E$ is the Lyapunov function (\ref{Lyapunov}). Since this function is
strictly decreasing along a trajectory $\phi$ which is not a fixed point,
there cannot exist a heteroclinic connection between these two points.
\end{proof}

\begin{remark}
In the case where condition (A7) is assumed, there could be more than two
equilibria with $k+1$ zeros in $[0,1]$. In this case there could exist
connections between fixed points with different values of the constant $d$.
\end{remark}

\bigskip

Using the concept of lap number of the solutions we can discard some more
heteroclinic connections.

We consider the function $w\left(  t\right)  =u(\alpha^{-1}\left(  t\right)
)$, which is a strong solution to problem (\ref{w}). For any strong solution
$u\left(  \text{\textperiodcentered}\right)  $ conditions (A1), (A3), (A6) and
$u\in C([0,+\infty),H_{0}^{1}(\Omega))$ imply that the function%
\[
r\left(  t,x\right)  =\frac{\lambda}{a(\Vert w\left(  t\right)  \Vert
_{H_{0}^{1}}^{2})}\frac{f\left(  w\left(  t,x\right)  \right)  }{w\left(
t,x\right)  }%
\]
is continuous and $w\left(  \text{\textperiodcentered}\right)  $ is a solution
of the linear equation
\begin{equation}
\dfrac{\partial w}{\partial t}-\dfrac{\partial^{2}w}{\partial x^{2}}=r\left(
t,x\right)  w.\label{Linear}%
\end{equation}

Thus, by Theorem \ref{LapNumber} in the Appendix (see also Theorem C in
\cite{Angenent}) the number of zeros of $w\left(  t\right)  $ in $[0,1]$ is a
nonincreasing function of $t$. Since $\alpha^{-1}\left(  t\right)  $ is an
increasing function of time, the result is also true for the solution
$u\left(  \text{\textperiodcentered}\right)  $. Making use of this property we
will prove the following result.

\begin{lemma}
\label{NoConnection2}Let assume conditions (A1)-(A6), $h\equiv0$ and either
(A7) or (A8). Then if $n>k$, there cannot exist a connection from the fixed
point $u_{k,d_{k}^{\ast}}^{\pm}$ to the fixed point $u_{n,d_{n}^{\ast}}^{\pm}%
$, that is, there cannot exist a bounded complete trajectory $\phi$ such that%
\[
\phi\left(  t\right)  \rightarrow u_{n,d_{n}^{\ast}}^{\pm}\text{ as
}t\rightarrow+\infty,\ \phi\left(  t\right)  \rightarrow u_{k,d_{k}^{\ast}%
}^{\pm}\text{ as }t\rightarrow-\infty.
\]

\end{lemma}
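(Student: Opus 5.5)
We argue by contradiction using the non-increasing property of the number of zeros established just before the statement (via the time change $w(t)=u(\alpha^{-1}(t))$, the linear equation (\ref{Linear}) and Theorem \ref{LapNumber}). Suppose $\phi$ is a bounded complete trajectory with $\phi(t)\to u_{n,d_{n}^{\ast}}^{\pm}$ as $t\to+\infty$ and $\phi(t)\to u_{k,d_{k}^{\ast}}^{\pm}$ as $t\to-\infty$, where $n>k$. Denote by $z(v)$ the number of zeros of $v$ in $[0,1]$. The equilibrium $u_{j,d_{j}^{\ast}}^{\pm}$ has exactly $j+1$ zeros, all simple.

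\textbf{Step 1: the zero count is constant in $t$.} The convergence holds in $H_{0}^{1}$, and since $\phi(t)\in\mathcal{A}$, Corollary \ref{CompactC1} upgrades it to convergence in $C^{1}([0,1])$. Every trajectory point in $\mathcal{A}$ has a convergent subsequence in $C^{1}$, and the $H_0^1$-limit identifies that limit, so the full convergence holds in $C^{1}$.

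Near a function with only simple zeros, $C^{1}$-closeness preserves the number of zeros. This covers the interior zeros. At the boundary points $x=0,1$ the derivative of the limit is nonzero and every $\phi(t)$ vanishes there, so no extra zeros accumulate near the boundary either. Hence $z(\phi(t))=n+1$ for all large positive $t$, and $z(\phi(t))=k+1$ for all large negative $t$.

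\textbf{Step 2: contradiction with monotonicity.} Apply the non-increasing property of the zero number to the strong solution $\phi(\cdot+s)|_{[0,\infty)}\in\mathcal{R}$, with $s$ chosen very negative. This gives $z(\phi(t))\le z(\phi(s))=k+1$ for all $t\ge s$. Taking $t$ large yields $n+1\le k+1$, which contradicts $n>k$.

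\textbf{Main obstacle.} The delicate point is making the zero count well defined and applicable along the trajectory. Theorem \ref{LapNumber} (Angenent) requires a continuous, bounded coefficient $r$. We have this on the attractor, since $\phi$ is bounded in $L^{\infty}$, $f$ is differentiable at $0$ by (A3), and (A6) holds. The theorem asserts finiteness and monotonicity of zeros for $t>s$, possibly with zeros counted after a drop, and this is exactly what Step 2 uses.

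We also have to handle the time reparametrization. Because $\alpha$ is increasing and $\alpha(t)\to\pm\infty$ as $t\to\pm\infty$ (using $a\ge m$), the ordering of times is preserved, so monotonicity transfers back from $w$ to $u$.

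Finally, the $C^{1}$ stability of simple zeros should be checked carefully at the endpoints. Near $x=0$, the derivative of $\phi(t)$ is bounded away from zero, so $\phi(t)$ is strictly monotone there and has no zero other than $0$. The same argument applies at $x=1$.
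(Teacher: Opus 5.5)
Your proposal is correct and follows the paper's proof: you upgrade the convergence to $C^{1}([0,1])$ via Corollary~\ref{CompactC1}, use simplicity of the zeros to get $k+1$ zeros at very negative times and $n+1$ zeros at large positive times, and then contradict the non-increasing zero count from Theorem~\ref{LapNumber} applied to the time-rescaled solution. The only blemish is the heading of your Step~1: it should say the zero count is \emph{eventually} $k+1$ (as $t\to-\infty$) and $n+1$ (as $t\to+\infty$), not that it is constant in $t$.
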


\begin{proof}
By contradiction assume that such complete trajectory exists. Denote by
$l\left(  z\right)  $ the number of zeros of $z$ in $[0,1]$. Using the
compactness of the attractor in $C^{1}([0,1])$ (see Corollary \ref{CompactC1})
we obtain that
\begin{align*}
\phi\left(  t\right)   &  \rightarrow u_{n,d_{n}^{\ast}}^{\pm}\text{ in }%
C^{1}([0,1])\text{ as }t\rightarrow+\infty,\\
\phi\left(  t\right)   &  \rightarrow u_{k,d_{k}^{\ast}}^{\pm}\text{ in }%
C^{1}([0,1])\text{ as }t\rightarrow-\infty.
\end{align*}
Then, as the zeros are simple, we can choose $t_{1}>0$ large enough such that
$l\left(  \phi\left(  -t_{1}\right)  \right)  =l\left(  u_{k,d_{k}^{\ast}%
}^{\pm}\right)  =k+1.$ Put $u\left(  t\right)  =\phi\left(  t-t_{1}\right)  $,
which is a strong solution of (\ref{problem1}). Now we choose $t_{2}>0$ such
that $l\left(  u\left(  t_{2}\right)  \right)  =l\left(  u_{n,d_{n}^{\ast}%
}^{\pm}\right)  =n+1$. Then $l\left(  u\left(  0\right)  \right)  =k+1$ and
$l\left(  u(t_{2})\right)  =n+1>k+1$. This contradicts the fact that the
number of zeros of $u\left(  t\right)  $ is non-increasing.
\end{proof}

\section{Morse decomposition\label{DynamicalGrad}}

In this section we study in more detail the structure of the global attactor
in the case where the function $f$ is odd. More precisely, we obtain that the
m-semiflow $G$ is dynamically gradient, which is equivalent to saying that
there is a Morse decomposition of the attractor \cite{CostaValero16}, and
study the stability of the fixed points.

\subsection{Aproximations}

We consider now the situation when conditions (A1)-(A6), $h=0$ and either (A7)
or (A8) are satisfied and, moreover, the function $f$ is odd.

In this section we consider the following problems:
\begin{equation}
\left\{
\begin{array}
[c]{l}%
\dfrac{\partial u}{\partial t}-a(\Vert u\Vert_{H_{0}^{1}}^{2})\dfrac
{\partial^{2}u}{\partial x^{2}}=\lambda f_{\varepsilon_{n}}(u),\quad
t>0,\ x\in(0,1),\\
u(t,0)=0,\ u(t,1)=0,\\
u(0,x)=u_{0}(x),
\end{array}
\right.  \label{problemaprox}%
\end{equation}
where the function $f_{\varepsilon_{n}}$ is defined below and $\varepsilon
_{n}\rightarrow0$, as $n\rightarrow\infty$.

Let $\rho_{\varepsilon_{n}}(\cdot)$ be a mollifier in $\mathbb{R}$. We define
the function $f^{\varepsilon_{n}}(u)=\int_{\mathbb{R}}\rho_{\varepsilon_{n}%
}(s)f(u-s)ds$. It is well known that $f^{\varepsilon_{n}}(\cdot)\in C^{\infty
}(\mathbb{R})$ and that for any compact subset $A\subset\mathbb{R}$ we have
$f^{\varepsilon_{n}}\rightarrow f$ uniformly on $A$. It is clear that for
$u>\varepsilon_{n}$ the function $f^{\varepsilon_{n}}\left(  u\right)  $ is
strictly concave.

We need the approximation to fulfil (A2)-(A3). For that end, we consider the
approximation except on the interval $[-\varepsilon_{n},\varepsilon_{n}]$, for
any $\varepsilon_{n}>0$. There exists a polynomial of sixth degree $p(x)$ such
that
\[%
\begin{split}
&  p(0)=0,\quad\quad\quad p(\varepsilon_{n})=h(\varepsilon_{n}),\\
&  p^{\prime}(0)=1,\quad\quad\quad p^{\prime}(\varepsilon_{n})=h^{\prime
}(\varepsilon_{n}),\\
&  p^{\prime\prime}(0)=0,\quad\quad\quad p^{\prime\prime}(\varepsilon
_{n})=h^{\prime\prime}(\varepsilon_{n}),\\
&  p^{\prime\prime\prime}(0)=-1.
\end{split}
\]
We choose $\gamma>0$ such that $p^{\prime\prime}\left(  s\right)  <0$ for all
$s\in(0,\gamma]$. We can assume that $\varepsilon_{n}<\gamma$ for all $n$.

Thus, by construction the function
\begin{equation}
f_{\varepsilon_{n}}(x)=\left\{
\begin{array}
[c]{lcc}%
-f^{\varepsilon_{n}}(-x) & if & x<-\varepsilon_{n},\\
-p(-x) & if & -\varepsilon_{n}\leq x\leq0,\\
p(x) & if & 0\leq x\leq\varepsilon_{n},\\
f^{\varepsilon_{n}}(x) & if & x>\varepsilon_{n}%
\end{array}
\right.  \label{faproxim}%
\end{equation}
approximates the function $f$ uniformly in compact sets, that is, for any
$[-M,M]$ and $\delta>0$ there exists $n_{0}(M,\delta)\in\mathbb{N}$ such that
\begin{equation}
|f(x)-f_{\varepsilon_{n}}(x)|<\delta,\quad\text{ for all }n\geq n_{0}%
,\ x\in\lbrack-M,M].\label{uniformconverg}%
\end{equation}
Also, it satisfies the following properties:

\begin{enumerate}
\item[(B1)] $f_{\varepsilon_{n}}\in C^{2}(\mathbb{R});$

\item[(B2)] $f_{\varepsilon_{n}}(0)=0;$

\item[(B3)] $f_{\varepsilon_{n}}^{\prime}\left(  0\right)  =1;$

\item[(B4)] $f_{\varepsilon_{n}}$ is strictly concave if $u>0$ and strictly
convex if $u<0$;

\item[(B5)] $f_{\varepsilon_{n}}$ is odd.
\end{enumerate}

\begin{lemma}
\label{ApproxA5}Let $f$ satisfy (A5). Then the functions $f_{\varepsilon_{n}}$
satisfy condition (A5) and (\ref{condf}) with independent constants of
$\varepsilon_{n}$.
\end{lemma}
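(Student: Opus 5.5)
We split the real line into the region $|u|\le\varepsilon_n$, where $f_{\varepsilon_n}=\pm p$, and the region $|u|>\varepsilon_n$, where $f_{\varepsilon_n}(u)=\pm f^{\varepsilon_n}(\pm u)$. Since $f_{\varepsilon_n}$ is odd and all three inequalities in (A5) are invariant under $u\mapsto -u$ (both $f(u)u$ and $|f(u)|$ are even), it is enough to treat $u\ge0$. We may also assume $\varepsilon_n\le1$, so that the mollifier $\rho_{\varepsilon_n}$ is supported in $[-1,1]$.

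\emph{Bounded region.} On $[0,\varepsilon_n]\subset[0,\gamma]$ the polynomial $p$ interpolates $h(\varepsilon_n)$ and its derivatives. Here $h$ means the smooth outer function $f^{\varepsilon_n}$. We will check that the coefficients of $p$ stay bounded uniformly in $n$. The tempting justification is that these data converge: $f^{\varepsilon_n}(\varepsilon_n)\to f(0)=0$, and $(f^{\varepsilon_n})'(\varepsilon_n)$ stays bounded by concavity together with (A3). For the second derivative the only usable information is its sign, so at this step we instead rely on the construction itself: $p$ is odd-extended and concave on $(0,\gamma]$, so $0\le p(u)\le u$ there (concavity with $p(0)=0$, $p'(0)=1$). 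Then $|f_{\varepsilon_n}(u)|\le \varepsilon_n\le 1$ and $f_{\varepsilon_n}(u)u\le1$ on $[-\varepsilon_n,\varepsilon_n]$. Both are trivially absorbed into (\ref{A51}) and (\ref{A52}) after enlarging $C_1,C_3$ by $1$.

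\emph{Outer region.} For $u>\varepsilon_n$ we use $f^{\varepsilon_n}(u)=\int\rho_{\varepsilon_n}(s)f(u-s)\,ds$ with $|s|\le1$. Then (\ref{A51}) gives
\[
|f^{\varepsilon_n}(u)|\le C_1+C_2(|u|+1)^{p-1}\le C_1'+C_2'|u|^{p-1},
\]
with constants independent of $n$. For (\ref{A52}) with $p>2$ we write
\[
f^{\varepsilon_n}(u)u=\int\rho_{\varepsilon_n}(s)\Big[f(u-s)(u-s)+f(u-s)s\Big]\,ds.
\]
The first term is bounded by $C_3-C_4|u-s|^p\le C_3-C_4 2^{1-p}|u|^p+C$, using $|u|^p\le 2^{p-1}(|u-s|^p+1)$. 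The second term is bounded by $C_1+C_2(|u|+1)^{p-1}$. That lower-order power is absorbed by Young's inequality into $\tfrac12 C_4 2^{1-p}|u|^p$ plus a constant. This gives (\ref{A52}) with $C_4'=C_4 2^{-p}$ and some $C_3'$, both independent of $n$.

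\emph{The case $p=2$.} For (\ref{A53}) we first use (\ref{condf}) for $f$. Given $\varepsilon>0$ we have
\[
f(u-s)(u-s)\le m_\varepsilon+\varepsilon(u-s)^2,
\]
which in turn gives $f(v)\le m_\varepsilon/v+\varepsilon v$ for $v\ge1$. Averaging over $|s|\le1$ yields $f^{\varepsilon_n}(u)/u\le \varepsilon'$ for large $u$, uniformly in $n$. Hence (\ref{condf}) holds for $f_{\varepsilon_n}$ with $m_\varepsilon$ independent of $n$. The range $0\le u\le 2$ is handled by the uniform local bound on $f$ over $[-3,3]$. The same averaging argument gives (\ref{condf}) when $p>2$, since it follows from (\ref{A52}) as noted in Section 2.

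\emph{Main obstacle.} The delicate point is the bounded region: the uniformity of $p$ must come from the concavity property (B4) rather than from explicit coefficients. If needed, we will argue directly: $f_{\varepsilon_n}$ is concave on $(0,\infty)$ with $f_{\varepsilon_n}(0)=0$, so $f_{\varepsilon_n}(u)\le f_{\varepsilon_n}'(0)u=u$. This yields the upper bounds on $[0,1]$ at once. The lower bound $f_{\varepsilon_n}\ge0$ near $0$ then follows from the fact that $f_{\varepsilon_n}$ converges uniformly to $f$ on compact sets, as stated in (\ref{uniformconverg}).
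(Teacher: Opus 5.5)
Your proof is correct and follows essentially the paper's route: a uniform bound on the compact piece, then mollifier averaging for $u>\varepsilon_n$ using the split $f(u-s)(u-s)+f(u-s)s$, the inequality $|u|^{p}\le 2^{p-1}(|u-s|^{p}+|s|^{p})$, and Young's inequality to obtain (\ref{A51})--(\ref{A52}) with $n$-independent constants.

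Two small remarks. First, concavity with $p(0)=0$, $p'(0)=1$ gives only $p(u)\le u$. The lower bound on $[0,\varepsilon_n]$ has to come either from the chord inequality $p(u)\ge (u/\varepsilon_n)f^{\varepsilon_n}(\varepsilon_n)$ or from (\ref{uniformconverg}), which is exactly what the paper uses to discard $[-1,1]$. Second, for $p=2$ your direct averaging of $f(v)\le m_\varepsilon/v+\varepsilon v$ is a cleaner way to reach (\ref{condf}) than the paper's substitution $C_4=-\varepsilon$ into the $p>2$ chain, where the lower bound on $|u-s|^{p}$ would be applied with the wrong sign.
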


\begin{proof}
We assume without loss of generality that $\varepsilon_{n}<1$. In order to
check (\ref{A51})-(\ref{A52}) we only need to consider $u$ outside the
interval $[-1,1]$, because the sequence $\{f_{\varepsilon_{n}}\}$ is uniformly
bounded in any compact set of $\mathbb{R}$. Then for $u\not \in \lbrack-1,1]$
the H\"{o}lder inequality and $\int_{\mathbb{R}}\rho_{\varepsilon_{n}}\left(
s\right)  ds=1$ give%
\begin{align*}
\left\vert f_{\varepsilon_{n}}\left(  u\right)  \right\vert  &  =\left\vert
\int_{\mathbb{R}}f\left(  u-s\right)  \rho_{\varepsilon_{n}}\left(  s\right)
ds\right\vert \leq\int_{\mathbb{R}}\left\vert f\left(  u-s\right)  \right\vert
\rho_{\varepsilon_{n}}\left(  s\right)  ds\\
&  \leq\int_{\mathbb{R}}\left(  C_{1}+C_{2}|u-s|^{p-1}\right)  \rho
_{\varepsilon_{n}}\left(  s\right)  ds\\
&  \leq C_{1}+C_{2}2^{p-2}\left(  \int_{-\varepsilon_{n}}^{\varepsilon_{n}%
}\left(  \left\vert u\right\vert ^{p-1}+\left\vert s\right\vert ^{p-1}\right)
\rho_{\varepsilon_{n}}\left(  s\right)  ds\right)  \\
&  \leq\widetilde{C}_{1}+\widetilde{C}_{2}\left\vert u\right\vert ^{p-1}.
\end{align*}

If $f$ satisfies (\ref{A52}), then%
\begin{align*}
f_{\varepsilon_{n}}\left(  u\right)  u &  =\int_{\mathbb{R}}f\left(
u-s\right)  \left(  u-s\right)  \rho_{\varepsilon_{n}}\left(  s\right)
ds+\int_{\mathbb{R}}f\left(  u-s\right)  s\rho_{\varepsilon_{n}}\left(
s\right)  ds\\
&  \leq\int_{\mathbb{R}}\left(  C_{3}-C_{4}|u-s|^{p}\right)  \rho
_{\varepsilon_{n}}\left(  s\right)  ds+\int_{\mathbb{R}}\left(  C_{1}%
+C_{2}|u-s|^{p-1}\right)  s\rho_{\varepsilon_{n}}\left(  s\right)  ds\\
&  \leq K_{1}-C_{4}\int_{\mathbb{R}}\left(  2^{1-p}\left\vert u\right\vert
^{p}-\left\vert s\right\vert ^{p}\right)  \rho_{\varepsilon_{n}}\left(
s\right)  ds\\
&  +C_{2}2^{p-2}\int_{\mathbb{R}}\left(  \left\vert u\right\vert
^{p-1}+\left\vert s\right\vert ^{p-1}\right)  s\rho_{\varepsilon_{n}}\left(
s\right)  ds\\
&  \leq\widetilde{C}_{3}-\widetilde{C}_{4}|u|^{p},
\end{align*}
where we have used $\left\vert u\right\vert ^{p}\leq2^{p-1}\left(  \left\vert
s^{p}\right\vert +\left\vert u-s\right\vert ^{p}\right)  $ and the Young inequality.

For (\ref{condf}) we put in the above inequality $p=2$, $C_{3}=m_{\varepsilon
},\ C_{4}=-\varepsilon$ and obtain%
\[
f_{\varepsilon_{n}}\left(  u\right)  u\leq\widetilde{m}_{\varepsilon
}+\varepsilon u^{2},
\]
which obviously implies (\ref{A53}).
\end{proof}

\bigskip

Our next aim is to focus on the convergence of solutions of the approximations.

\begin{theorem}
\label{solconver}Let conditions (A1)-(A6), $h=0$ and either (A7) or (A8) be
satisfied and let, moreover, the function $f$ be odd. If $u_{\varepsilon
_{n},0}\rightarrow u_{0}$ in $H_{0}^{1}(\Omega)$ as $\varepsilon
_{n}\rightarrow0$, then for any sequence of solutions of (\ref{problemaprox})
$u_{\varepsilon_{n}}(\cdot)$ with $u_{\varepsilon_{n}}(0)=u_{\varepsilon
_{n},0}$ there exists a subsequence of $\varepsilon_{n}$ such that
$u_{\varepsilon_{n}}$ converges to some strong solution $u\left(
\text{\textperiodcentered}\right)  $ of (\ref{problem1}) in the space
$C([0,T],H_{0}^{1}(\Omega))$, for any $T>0$.
\end{theorem}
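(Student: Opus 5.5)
The plan is to repeat the compactness argument of Theorem \ref{existence} and Lemma \ref{lemma4}, now with the nonlinearity also varying. First I will obtain a priori bounds that are uniform in $n$. By Lemma \ref{ApproxA5}, the functions $f_{\varepsilon_{n}}$ satisfy (A5) and (\ref{condf}) with constants independent of $\varepsilon_{n}$. Hence they also satisfy (\ref{Diss}) and (\ref{Diss2}) uniformly.

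Since $u_{\varepsilon_{n},0}\rightarrow u_{0}$ in $H_{0}^{1}(\Omega)$, the initial data are bounded in $H_{0}^{1}(\Omega)\subset L^{\infty}(\Omega)$. The energy estimate (\ref{bound14}) then gives uniform bounds for $u_{\varepsilon_{n}}$ in $L^{\infty}(0,T;H_{0}^{1}(\Omega))$ and for $du_{\varepsilon_{n}}/dt$ in $L^{2}(0,T;L^{2}(\Omega))$. The one ingredient here is that $\int_{\Omega}\mathcal{F}_{\varepsilon_{n}}(u_{\varepsilon_{n},0})dx$ is bounded. This follows from the uniform convergence (\ref{uniformconverg}) on compact sets, which makes $\{f_{\varepsilon_{n}}\}$ uniformly bounded on $[-M,M]$. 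By the embedding into $L^{\infty}$, the functions $u_{\varepsilon_{n}}$ stay in a fixed interval $[-M,M]$. Therefore $f_{\varepsilon_{n}}(u_{\varepsilon_{n}})$ is bounded in $L^{\infty}(0,T;L^{\infty}(\Omega))$, and the $H^{2}$ estimate gives a bound in $L^{2}(0,T;D(A))$.

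Next I will extract a limit. By Aubin--Lions and Ascoli--Arzel\`{a}, a subsequence satisfies all the convergences in (\ref{convergences1}). The new point is identifying the weak-star limit of $f_{\varepsilon_{n}}(u_{\varepsilon_{n}})$. Since $u_{\varepsilon_{n}}\rightarrow u$ a.e. and everything stays in $[-M,M]$, I write
\[
|f_{\varepsilon_{n}}(u_{\varepsilon_{n}})-f(u)|\leq\sup_{[-M,M]}|f_{\varepsilon_{n}}-f|+|f(u_{\varepsilon_{n}})-f(u)|.
\]
The first term tends to zero by (\ref{uniformconverg}), and the second tends to zero a.e. by continuity of $f$. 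So the limit is $f(u)$, in fact strongly in every $L^{q}$. The nonlocal coefficient is handled exactly as in Theorem \ref{existence}. Passing to the limit in (\ref{EquationRegular}) and checking the initial condition as in (\ref{In1})--(\ref{In2}) shows that $u$ is a strong solution of (\ref{problem1}) with $u(0)=u_{0}$.

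The main obstacle is upgrading this to convergence in $C([0,T],H_{0}^{1}(\Omega))$. I will use the monotonicity trick of Lemma \ref{lemma4}. Multiplying by $du_{\varepsilon_{n}}/dt$ and using the uniform $L^{\infty}$ bound on the right-hand side, the functions $Q_{n}(t)=A(\Vert u_{\varepsilon_{n}}(t)\Vert_{H_{0}^{1}}^{2})-2Ct$ and $Q(t)$ are continuous and non-increasing with the same constant $C$. Also $Q_{n}\rightarrow Q$ a.e., and $Q_{n}(0)\rightarrow Q(0)$ because the initial data converge strongly.

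Take $t_{n}\rightarrow t\in[0,T]$. For any $s<t$ at which $Q_{n}(s)\rightarrow Q(s)$, monotonicity gives $\limsup Q_{n}(t_{n})\leq Q(s)$. If $t=0$, use $Q_{n}(0)$ instead. Letting $s\rightarrow t$ and using continuity of $Q$ yields $\limsup\Vert u_{\varepsilon_{n}}(t_{n})\Vert_{H_{0}^{1}}\leq\Vert u(t)\Vert_{H_{0}^{1}}$.

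On the other hand, $u_{\varepsilon_{n}}(t_{n})\rightharpoonup u(t)$ weakly in $H_{0}^{1}(\Omega)$. This follows from the strong convergence in $C([0,T];L^{2}(\Omega))$, the continuity of $u$, and the $H_{0}^{1}$ bound. Weak convergence plus convergence of norms gives $u_{\varepsilon_{n}}(t_{n})\rightarrow u(t)$ strongly in $H_{0}^{1}(\Omega)$. Since this holds for every convergent sequence $t_{n}\rightarrow t$ and $[0,T]$ is compact, the convergence is uniform on $[0,T]$, i.e. it holds in $C([0,T],H_{0}^{1}(\Omega))$. A diagonal argument over $T\in\mathbb{N}$ gives a single subsequence that works for every $T>0$.
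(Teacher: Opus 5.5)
Your proposal is correct and follows essentially the same route as the paper. It obtains uniform a priori bounds (via Lemma \ref{ApproxA5} and the energy identity), passes to the limit by compactness as in Theorem \ref{existence}, and then uses the non-increasing functions $Q_{n}(t)=A(\Vert u_{\varepsilon_{n}}(t)\Vert_{H_{0}^{1}}^{2})-2Ct$ from Lemma \ref{lemma4}, together with $u_{\varepsilon_{n}}(t_{n})\rightharpoonup u(t)$, to upgrade to convergence in $C([0,T],H_{0}^{1}(\Omega))$. You are also more explicit than the paper where it says only "repeat the same lines": you identify the limit of $f_{\varepsilon_{n}}(u_{\varepsilon_{n}})$ through (\ref{uniformconverg}) on $[-M,M]$, and you use sequential compactness plus a diagonal argument to get uniform convergence on every $[0,T]$ along one subsequence.
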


\begin{proof}
Using (\ref{equality}) and (\ref{FDeriv}) we can repeat the same lines of the
proof of Theorem \ref{existence} and obtain the existence of a function
$u\left(  \text{\textperiodcentered}\right)  $ and a subsequence of
$u_{\varepsilon_{n}}$ such that%
\[
u_{\varepsilon_{n}}\overset{\ast}{\rightharpoonup}u\text{ in }L^{\infty
}(0,T;H_{0}^{1}(\Omega)),
\]%
\[
u_{\varepsilon_{n}}\rightharpoonup u\text{ in }L^{2}(0,T;D(A)),
\]%
\[
\frac{du_{\varepsilon_{n}}}{dt}{\rightharpoonup}\frac{du}{dt}\text{ in }%
L^{2}(0,T;L^{2}(\Omega)),
\]%
\[
u_{\varepsilon_{n}}\rightarrow u\text{ in }C([0,T];L^{2}(\Omega)),
\]%
\[
u_{\varepsilon_{n}}\rightarrow u\text{ in }L^{2}(0,T;H_{0}^{1}(\Omega)),
\]%
\[
f_{\varepsilon_{n}}(u_{n_{\varepsilon}})\overset{\ast}{\rightharpoonup
}f(u)\text{ in }L^{\infty}(0,T;L^{\infty}(\Omega)),
\]%
\[
a(\Vert u_{\varepsilon_{n}}\Vert_{H_{0}^{1}}^{2})\Delta u_{\varepsilon_{n}%
}\rightharpoonup a(\Vert u\Vert_{H_{0}^{1}}^{2})\Delta u\text{ in }%
L^{2}(0,T;L^{2}(\Omega)).
\]
Also, in the same way we prove that $u\left(  \text{\textperiodcentered
}\right)  $ is a strong solution to problem (\ref{problem1}) such that
$u\left(  0\right)  =u_{0}.$

The uniform estimate in the space $H_{0}^{1}\left(  \Omega\right)  $ implies
also that if $t_{n}\rightarrow t_{0}$, then $u_{\varepsilon_{n}}\left(
t_{n}\right)  \rightharpoonup u\left(  t_{0}\right)  $ in $H_{0}^{1}(\Omega)$.
We need to prove that this convergence is in fact strong, proving then the
convergence in $C([0,T],H_{0}^{1}(\Omega))$ for any $T>0$.

In the same way as in the proof of Lemma \ref{lemma4} we deduce that for some
$C>0$ the functions $Q_{n}(t)=A(\Vert u_{\varepsilon_{n}}(t)\Vert_{H_{0}^{1}%
}^{2})-2Ct,$ $Q(t)=A(\Vert u(t)\Vert_{H_{0}^{1}}^{2})-2Ct$ are continuous and
non-increasing in $[0,T].$ Moreover, $Q_{n}(t)\rightarrow Q(t)$ for a.e.
$t\in(0,T).$ Let first $t_{0}>\dot{0}$. Then we take $0<t_{j}<t_{0}$ such that
$t_{j}\rightarrow t_{0}$ and $Q_{n}(t_{j})\rightarrow Q(t_{j})$ for all $j.$
Then
\[
Q_{n}(t_{n})-Q(t_{0})\leq Q_{n}(t_{j})-Q(t_{0})\leq|Q_{n}(t_{j})-Q(t_{j}%
)|+|Q(t_{j})-Q(t_{0})|\text{ for }t_{n}>t_{j}.
\]
For any $\delta>0$ there exist $j(\delta)$ and $N(j(\delta))$ such that
$Q_{n}(t_{n})-Q(t_{0})\leq\delta$ if $n\geq N,$ so $\limsup Q_{n}(t_{n})\leq
Q(t_{0}).$ Hence, a contradiction argument using the continuity of $A\left(
s\right)  $ shows that $\limsup\Vert u_{\varepsilon_{n}}(t_{n})\Vert
_{H_{0}^{1}}^{2}\leq\Vert u(t_{0})\Vert_{H_{0}^{1}}^{2}.$ This, together with
$\liminf\Vert u_{\varepsilon_{n}}(t_{n})\Vert_{H_{0}^{1}}^{2}\geq\Vert
u(t_{0})\Vert_{H_{0}^{1}}^{2},$ implies that $\Vert u_{\varepsilon_{n}}%
(t_{n})\Vert_{H_{0}^{1}}^{2}\rightarrow\Vert u(t_{0})\Vert_{H_{0}^{1}}^{2},$
so that $u_{\varepsilon_{n}}(t_{n})\rightarrow u(t_{0})$ strongly in
$H_{0}^{1}(\Omega).$ For the case when $t_{0}=0$ we use the same argument as
in Lemma \ref{lemma4}.
\end{proof}

\bigskip

We denote by $\mathcal{A}_{\varepsilon_{n}}$ the global attractor for the
semiflow $G_{\varepsilon_{n}}$ corresponding to problem (\ref{problemaprox}).

\begin{lemma}
\label{comtrajbound}Assume the condition of Theorem \ref{solconver}. Then
$\cup_{_{n\in\mathbb{N}}}\mathcal{A}_{\varepsilon_{n}}$ is bounded in
$H_{0}^{1}(\Omega)$. Hence, the set $\overline{\cup_{n\in\mathbb{N}%
}\mathcal{A}_{\varepsilon_{n}}}$ is compact in $L^{2}(\Omega)$.
\end{lemma}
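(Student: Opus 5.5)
The plan is to produce a bound on $\cup_{n}\mathcal{A}_{\varepsilon_{n}}$ in $H_{0}^{1}(\Omega)$ that depends only on constants independent of $n$. By Lemma \ref{ApproxA5}, each $f_{\varepsilon_{n}}$ satisfies (A5) and (\ref{condf}) with constants $C_{1},\dots,C_{4}$, $m_{\varepsilon}$ not depending on $n$. Consequently, (\ref{condFA}) and (\ref{condF}) also hold for $\mathcal{F}_{\varepsilon_{n}}(u)=\int_{0}^{u}f_{\varepsilon_{n}}(s)ds$ with uniform constants. I will then rerun the proofs of Propositions \ref{abosorbinginL2firstcase} and \ref{absorbingset} for problem (\ref{problemaprox}) and keep track of the constants. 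In the first step, inequality (\ref{Inequ}) holds with $\delta,\kappa$ independent of $n$ (here $h=0$). This gives an absorbing ball in $L^{2}(\Omega)$ of radius $K=\sqrt{2\kappa/\delta}$ that is the same for every $n$.

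Second, I will use the energy/uniform Gronwall argument from Proposition \ref{absorbingset}. It yields
\[
\Vert u(1)\Vert_{H_{0}^{1}}^{2}\leq K_{11}+K_{12}\Vert u(0)\Vert_{L^{2}}^{2}
\]
for every strong solution of (\ref{problemaprox}). The constants $K_{i}$ depend only on $m$, on $M_{1}$ (under (A7)) or on the monotonicity of $a$ (under (A8)), and on the uniform constants from (A5) and (\ref{condf}), so they are independent of $n$. The lower bound $\frac{m}{2}\Vert u\Vert_{H_{0}^{1}}^{2}\leq\frac{1}{2}A(\Vert u\Vert_{H_{0}^{1}}^{2})$ together with (\ref{condF}) for $\mathcal{F}_{\varepsilon_{n}}$ converts the bound on the energy into an $H_{0}^{1}$ bound.

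Third, I will use invariance: $\mathcal{A}_{\varepsilon_{n}}$ is bounded and invariant, so any $z\in\mathcal{A}_{\varepsilon_{n}}$ can be written $z=u(t+1)$ with $u(0)\in\mathcal{A}_{\varepsilon_{n}}$ and $t$ arbitrarily large. Alternatively, Theorem \ref{structureattractor} gives a bounded complete trajectory $\gamma$ with $\gamma(0)=z$. Since $\mathcal{A}_{\varepsilon_{n}}$ is invariant and the absorbing ball attracts it, $\Vert y\Vert_{L^{2}}\leq K$ for all $y\in\mathcal{A}_{\varepsilon_{n}}$. Indeed, applying (\ref{Inequ}) along $\gamma$ from time $-s$ to $0$ and letting $s\rightarrow\infty$ gives $\Vert z\Vert_{L^{2}}^{2}\leq\kappa/\delta$, using that $\gamma$ is bounded. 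Then $z=\gamma(0)$ is the value at time $1$ of the solution starting from $\gamma(-1)$, which gives $\Vert z\Vert_{H_{0}^{1}}^{2}\leq K_{11}+K_{12}\kappa/\delta$ uniformly in $n$. Compactness of the closure in $L^{2}(\Omega)$ then follows from the compact embedding $H_{0}^{1}(\Omega)\subset\subset L^{2}(\Omega)$.

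The main obstacle is the second step, namely checking that every constant in the uniform Gronwall argument is genuinely independent of $n$. In particular, the lower bound on $-\int\mathcal{F}_{\varepsilon_{n}}$ and the $L^{p}$ time-integral estimate (\ref{EstLp}) must be controlled with uniform $C_{3},C_{4}$, or via (\ref{condf}) when $p=2$. I will handle this directly by citing Lemma \ref{ApproxA5} at each place where (A5) or (\ref{condf}) is used.
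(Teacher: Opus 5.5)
Your proposal is correct and follows essentially the paper's proof: Lemma \ref{ApproxA5} makes the constants in (\ref{Inequ}) and in the estimates of Proposition \ref{absorbingset} independent of $\varepsilon_n$. This yields common absorbing balls in $L^2(\Omega)$ and $H_0^1(\Omega)$ containing every $\mathcal{A}_{\varepsilon_n}$, and compactness of the closure follows from $H_0^1(\Omega)\subset\subset L^2(\Omega)$. Your extra step with bounded complete trajectories (applying (\ref{Inequ}) backward in time, then the smoothing estimate over one unit of time) only spells out why each attractor lies in the common ball, which the paper takes for granted.
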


\begin{proof}
By Lemma \ref{ApproxA5} inequality (\ref{condf}) is satisfied for any $n$ with
constants which are independent of $\varepsilon_{n},$ so inequality
(\ref{Inequ}) holds true with constants independent of $\varepsilon_{n}$.
Thus, there a exists a common absorbing ball $B_{0}$ in $L^{2}\left(
\Omega\right)  $ (with radius $K>0$) for problems (\ref{problemaprox}).
Further, by repeating the same steps as in Proposition \ref{absorbingset} we
obtain a common absorbing ball in $H_{0}^{1}\left(  \Omega\right)  $ (with
radius $\widetilde{K}>0$) as by Lemma \ref{ApproxA5} the constants which are
involved are independent of $\varepsilon_{n}$. Thus, $\left\Vert y\right\Vert
_{H_{0}^{1}}\leq\widetilde{K}$ for any $y\in\cup_{_{n\in\mathbb{N}}%
}\mathcal{A}_{\varepsilon_{n}}.$
\end{proof}

\begin{lemma}
Assume the condition of Theorem \ref{solconver}. Then $\cup_{_{n\in\mathbb{N}%
}}\mathcal{A}_{\varepsilon_{n}}$ is bounded in $V^{2r}$ for any $0\leq r<1$.
Hence, $\overline{\cup_{_{n\in\mathbb{N}}}\mathcal{A}_{\varepsilon_{n}}}$ is
compact in $V^{2r}$ and $C^{1}([0,1])$.
\end{lemma}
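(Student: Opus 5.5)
The plan is to repeat the argument of Lemma \ref{CompactVr} uniformly in $n$, using Lemma \ref{comtrajbound} to make every constant independent of $\varepsilon_{n}$.

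Fix $z\in\mathcal{A}_{\varepsilon_{n}}$. Since $\mathcal{A}_{\varepsilon_{n}}$ is invariant, there are $u_{0}\in\mathcal{A}_{\varepsilon_{n}}$ and a strong solution $u$ of (\ref{problemaprox}) with $u(0)=u_{0}$, $u(1)=z$ and $u(t)\in\mathcal{A}_{\varepsilon_{n}}$ for all $t\ge0$. Introduce the time change $\alpha(t)=\int_{0}^{t}a(\Vert u(s)\Vert_{H_{0}^{1}}^{2})ds$ and set $w(t)=u(\alpha^{-1}(t))$. Then $w$ is a mild solution of (\ref{ProblemAf}) with
\[
g(t)=\lambda f_{\varepsilon_{n}}(w(t))/a(\Vert w(t)\Vert_{H_{0}^{1}}^{2}),
\]
exactly as in the text preceding Lemma \ref{CompactVr}.

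The next step is to bound $g$ and $\alpha(1)$ uniformly in $n$.
\begin{itemize}
\item By Lemma \ref{comtrajbound}, $\Vert u(t)\Vert_{H_{0}^{1}}\le\widetilde K$ for all $t$ and all $n$. The embedding $H_{0}^{1}\subset L^{\infty}$ then gives $\Vert u(t)\Vert_{L^{\infty}}\le M$ with $M$ independent of $n$.
\item The functions $f_{\varepsilon_{n}}$ are uniformly bounded on $[-M,M]$. This follows from (\ref{uniformconverg}) together with the continuity of $f$, or directly from the uniform growth bound of Lemma \ref{ApproxA5}.
\item With (A6), these two facts give $\Vert g\Vert_{L^{\infty}(0,\alpha(1);L^{2})}\le C$, with $C$ independent of $n$.
\item For $\alpha(1)$: the lower bound $\alpha(1)\ge m$ holds by (A6). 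The upper bound $\alpha(1)\le\max_{[0,\widetilde K^{2}]}a$ holds by continuity of $a$ on the compact interval.
\end{itemize}

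Next, apply the variation of constants formula together with the estimate $\Vert A^{r}e^{-At}\Vert\le M_{r}t^{-r}e^{-at}$. This gives
\[
\Vert A^{r}z\Vert_{L^{2}}\le M_{r}m^{-r}C+M_{r}C\int_{0}^{\alpha(1)}(\alpha(1)-s)^{-r}ds,
\]
which is finite for $r<1$ and uniform in $n$ and $z$. Hence $\cup_{n}\mathcal{A}_{\varepsilon_{n}}$ is bounded in $V^{2r}$ for every $0\le r<1$.

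For compactness, take $r'$ with $r<r'<1$. Boundedness in $V^{2r'}$ plus the compact embedding $V^{2r'}\subset V^{2r}$ gives relative compactness in $V^{2r}$, so the closure taken in $V^{2r}$ is compact. Choosing $3/4<r<1$, the continuous embedding $V^{2r}\subset C^{1}([0,1])$ (Lemma 37.8 in \cite{sellyou}, as used in Corollary \ref{CompactC1}) gives compactness in $C^{1}([0,1])$.

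The main obstacle is making sure the constants are genuinely uniform in $n$: the lower bound on $\alpha(1)$, which is needed to control the $t^{-r}$ singularity of the first term, and the uniform bound on $f_{\varepsilon_{n}}$. A secondary point is the meaning of the closure: I will interpret it as the closure in $V^{2r}$. This is consistent with the other topologies because the $L^{2}$-closure of a $V^{2r'}$-bounded set remains $V^{2r'}$-bounded, by weak lower semicontinuity of the norm.
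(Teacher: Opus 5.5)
Your proposal is correct and takes essentially the paper's route. The paper's proof likewise repeats the argument of Lemma \ref{CompactVr} using the uniform $H_0^1$ bound from Lemma \ref{comtrajbound}, then applies the compact embeddings $V^{2r'}\subset V^{2r}$ and the continuous embedding $V^{2r}\subset C^{1}([0,1])$ for $r>3/4$; your explicit uniform bounds ($\alpha(1)\ge m$, $\alpha(1)\le\max_{[0,\widetilde K^{2}]}a$, the uniform bound on $f_{\varepsilon_n}$ on $[-M,M]$) and the remark on closures fill in details the paper leaves implicit.
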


\begin{proof}
Using Lemma \ref{comtrajbound} we obtain the boundedness of $\cup
_{_{n\in\mathbb{N}}}\mathcal{A}_{\varepsilon_{n}}$ in $V^{2r}$ by repeating
the same lines in Lemma \ref{CompactVr}. The rest of the proof follows from
the compact embedding $V^{\alpha}\subset V^{\beta}$, $\alpha>\beta,$ and the
continuous embedding $V^{2r}\subset C^{1}([0,1])$ if $r>\frac{3}{4}.$
\end{proof}

\begin{corollary}
\label{relcompact}Assume the condition of Theorem \ref{solconver}. Then any
sequence $\xi_{n}\in\mathcal{A}_{\varepsilon_{n}}$ with $\varepsilon
_{n}\rightarrow0$ is relatively compact in $C^{1}([0,1])$.
\end{corollary}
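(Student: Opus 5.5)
This is immediate from the preceding lemma. Given a sequence $\xi_{n}\in\mathcal{A}_{\varepsilon_{n}}$, every term lies in $\cup_{n\in\mathbb{N}}\mathcal{A}_{\varepsilon_{n}}$, which by the previous lemma is bounded in $V^{2r}$ for every $0\leq r<1$.

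The plan is as follows.

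First, fix some $r$ with $\frac{3}{4}<r<1$, and pick $r'$ with $r<r'<1$. By the previous lemma (which rests on the uniform $H_{0}^{1}$ bound of Lemma \ref{comtrajbound} and the variation of constants estimate of Lemma \ref{CompactVr}), the sequence $\{\xi_{n}\}$ is bounded in $V^{2r'}$.

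Second, the embedding $V^{2r'}\subset V^{2r}$ is compact, so some subsequence of $\{\xi_{n}\}$ converges in $V^{2r}$.

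Third, since $r>\frac{3}{4}$, Lemma 37.8 in \cite{sellyou} gives the continuous embedding $V^{2r}\subset C^{1}([0,1])$. Hence the same subsequence converges in $C^{1}([0,1])$.

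The same argument applies to any subsequence of $\{\xi_{n}\}$, so the sequence is relatively compact in $C^{1}([0,1])$.

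There is no real obstacle here. The only point to check is that the constants in the $V^{2r'}$ bound do not depend on $\varepsilon_{n}$. This holds for two reasons. The bound of Lemma \ref{comtrajbound} is uniform in $n$. In addition, $g=f_{\varepsilon_{n}}(w)/a(\cdot)$ is bounded in $L^{\infty}(0,\alpha(1);L^{2}(\Omega))$ uniformly in $n$: the functions $f_{\varepsilon_{n}}$ are uniformly bounded on compact sets, $a\geq m$ by (A6), and $\alpha(1)$ is bounded above and below uniformly because $a$ is continuous on the bounded range of $\Vert u\Vert_{H_{0}^{1}}^{2}$.
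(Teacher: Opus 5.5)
Your proposal is correct and takes the same route as the paper. The corollary is read off directly from the preceding lemma, whose proof combines three ingredients: the uniform $H_{0}^{1}$ bound of Lemma \ref{comtrajbound} with the variation-of-constants estimate of Lemma \ref{CompactVr}, giving boundedness in $V^{2r'}$ for $r'<1$; the compact embedding $V^{2r'}\subset V^{2r}$; and the continuous embedding $V^{2r}\subset C^{1}([0,1])$ for $r>\frac{3}{4}$. Your check that the bound on $g$ and the upper and lower bounds on $\alpha(1)$ are independent of $\varepsilon_{n}$ is exactly what that lemma's proof (``repeating the same lines'') relies on.
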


\begin{lemma}
\label{remark28}Assume the condition of Theorem \ref{solconver}. Then up to a
subsequence any bounded complete trajectory $u_{\varepsilon_{n}}$ of
(\ref{problemaprox}) converges to a bounded complete trajectory $u$ of
(\ref{problem1}) in $C([-T,T],H_{0}^{1}(\Omega))$ for any $T>0$. On top of
that, if $y_{n}\in\mathcal{A}_{\varepsilon_{n}}$, then passing to a
subsequence $y_{n}\rightarrow y\in\mathcal{A}$ in $H_{0}^{1}\left(
\Omega\right)  .$ Hence,
\begin{equation}
dist_{H_{0}^{1}}\left(  \mathcal{A}_{\varepsilon_{n}},\mathcal{A}\right)
\rightarrow0\text{ as }n\rightarrow\infty. \label{USC}%
\end{equation}

\end{lemma}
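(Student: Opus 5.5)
The plan is a diagonal extraction over time intervals, built on Theorem \ref{solconver} together with the uniform bounds of Lemma \ref{comtrajbound}. Let $u_{\varepsilon_n}$ be bounded complete trajectories of (\ref{problemaprox}). Since they are bounded and complete, they lie in $\mathcal{A}_{\varepsilon_n}$ by the characterization (\ref{attractor}) applied to each approximate semiflow. Hence $u_{\varepsilon_n}(t)\in\mathcal{A}_{\varepsilon_n}$ for all $t\in\mathbb{R}$. By Lemma \ref{comtrajbound} and Corollary \ref{relcompact}, the values $\{u_{\varepsilon_n}(t)\}$ are uniformly bounded in $H_0^1(\Omega)$ and relatively compact there.

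\textbf{Step 1: convergence on each interval $[-T,T]$.} Fix $T=k\in\mathbb{N}$. The sequence $u_{\varepsilon_n}(-k)$ lies in $\cup_n\mathcal{A}_{\varepsilon_n}$, so a subsequence converges in $H_0^1(\Omega)$ to some $z_k$. Compactness in $H_0^1$ comes from the $V^{2r}$ bound with $r>1/2$. Apply Theorem \ref{solconver} to the shifted solutions $v_n(\cdot)=u_{\varepsilon_n}(\cdot-k)$, whose initial data converge strongly. This yields a further subsequence converging in $C([0,2k],H_0^1(\Omega))$ to a strong solution $w_k$ of (\ref{problem1}). Then $u_{\varepsilon_n}\to w_k(\cdot+k)$ in $C([-k,k],H_0^1(\Omega))$.

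\textbf{Step 2: the diagonal argument.} Do this successively for $k=1,2,\dots$, each time extracting from the previous subsequence. The limits are compatible: on $[-k,k]$ the limit at stage $k+1$ equals the limit at stage $k$. Define $u(t)$ as the common limit and take the diagonal subsequence. It converges to $u$ in $C([-T,T],H_0^1)$ for every $T$.

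\textbf{Step 3: $u$ is a bounded complete trajectory.} For each $s\in\mathbb{R}$, the restriction $u(\cdot+s)|_{[0,\infty)}$ coincides on every $[0,L]$ with a strong solution $w_k(\cdot+k+s)$ for $k$ large. A function that is a strong solution on every bounded interval is a strong solution, since the definition is local in $T$. So $u(\cdot+s)\in\mathcal{R}$ and $u$ is complete. Boundedness follows from the uniform $H_0^1$ bound, which passes to the limit. Hence $u\in\mathbb{K}$, and by Theorem \ref{structureattractor}, $u(t)\in\mathcal{A}$.

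\textbf{Step 4: the second claim and (\ref{USC}).} Given $y_n\in\mathcal{A}_{\varepsilon_n}$, the weak invariance of $\mathcal{A}_{\varepsilon_n}$ (Theorem \ref{structureattractor} for the approximate problem) gives bounded complete trajectories $u_{\varepsilon_n}$ with $u_{\varepsilon_n}(0)=y_n$. By Steps 1--3, a subsequence satisfies $y_n\to u(0)\in\mathcal{A}$ in $H_0^1$.

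For (\ref{USC}), argue by contradiction. If (\ref{USC}) failed, there would exist $\delta>0$ and $y_n\in\mathcal{A}_{\varepsilon_n}$ with $dist_{H_0^1}(y_n,\mathcal{A})\ge\delta$ along a subsequence. This contradicts the convergence just established.

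\textbf{Main obstacle.} The main subtlety is justifying that the approximate problems satisfy the hypotheses used for the original one. Specifically, the bounded complete trajectories of (\ref{problemaprox}) must lie in $\mathcal{A}_{\varepsilon_n}$, and those attractors must exist with weak invariance. Both follow because $f_{\varepsilon_n}$ satisfies (A1)--(A5) uniformly by Lemma \ref{ApproxA5}. The results of Section 4 therefore apply to each $n$.
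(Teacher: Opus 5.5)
Your proposal is correct and follows essentially the same route as the paper. The paper also uses relative compactness of $\cup_n\mathcal{A}_{\varepsilon_n}$ at time $-T$, Theorem~\ref{solconver} applied on $[-T,T]$, a diagonal extraction over successive intervals, boundedness of the limit from Lemma~\ref{comtrajbound}, weak invariance of $\mathcal{A}_{\varepsilon_n}$ to pass to $y_n$, and a contradiction argument for (\ref{USC}); you simply spell out what it leaves implicit, namely the time shift, the compatibility of the limits, and why the limit lies in $\mathcal{R}$ on every half-line and hence in $\mathcal{A}$.
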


\begin{proof}
Let fix $T>0$. By Corollary \ref{relcompact} $u_{\varepsilon_{n}%
}(-T)\rightarrow y$ in $H_{0}^{1}(\Omega)$ up to a subsequence. Theorem
\ref{solconver} implies that $u_{\varepsilon_{n}}$ converges in
$C([-T,T],H_{0}^{1}(\Omega))$ to some solution $u$ of (\ref{problem1}). If we
choose successive subsequences for $-2T,-3T\ldots$ and apply the standard
diagonal procedure, we obtain that a subsequence $u_{\varepsilon_{n}}$
converges to a complete trajectory $u$ of (\ref{problem1}) in $C([-T,T],H_{0}%
^{1}(\Omega))$ for any $T>0$. Finally, from Lemma \ref{comtrajbound} this
trajectory is bounded.

If $y_{n}\in\mathcal{A}_{\varepsilon_{n}},$ by Corollary \ref{relcompact} we
can extract a subsequence converging to some $y$. If we take a sequence of
bounded complete trajectories $\phi_{n}\left(  \text{\textperiodcentered
}\right)  $ of (\ref{problemaprox}) such that $\phi_{n}\left(  0\right)
=y_{n}$, then by the previous result it converges in $C([-T,T],H_{0}%
^{1}(\Omega))$ to some bounded complete trajectory $\phi\left(
\text{\textperiodcentered}\right)  $ of (\ref{problem1}), so $y\in
\mathcal{A}.$

Finally, if (\ref{USC}) was not true, there would exist $\delta>0$ and a
sequence $y_{n}\in\mathcal{A}_{\varepsilon_{n}}$ such that $dist_{H_{0}^{1}%
}(y,\mathcal{A})>\delta$. But passing to a subsequence $y_{n}\rightarrow
y\in\mathcal{A}$, which is a contradiction.
\end{proof}

\begin{lemma}
\label{ConvergTao}Assume the conditions of Theorem \ref{solconver}. Let
$\tau_{\pm}^{d_{n},\varepsilon_{n}}$ be the functions (\ref{Tao+}%
)-(\ref{Tao-}) for problem (\ref{FixedChaffee}) but replacing $f$ by
$f_{\varepsilon_{n}}$ and $d$ by $d_{n}$. Let $d_{n},E_{n}\rightarrow0$ as
$n\rightarrow\infty$. Then
\[
\lim_{n\rightarrow\infty}\tau_{\pm}^{d_{n},\varepsilon_{n}}(E_{n})=\frac
{\sqrt{a\left(  0\right)  }\pi}{\sqrt{2\lambda}}.
\]

\end{lemma}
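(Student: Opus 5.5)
We write $\tau_{+}^{d_{n},\varepsilon_{n}}(E_{n})=\widetilde{\lambda}_{n}^{-1/2}I_{n}$, with $\widetilde{\lambda}_{n}=\lambda/a(d_{n})$ and
\[
I_{n}=\int_{0}^{U_{+}^{n}(E_{n})}(E_{n}-F_{\varepsilon_{n}}(u))^{-1/2}\,du,
\]
where $F_{\varepsilon_{n}}(u)=\int_{0}^{u}f_{\varepsilon_{n}}(s)ds$ and $U_{+}^{n}(E_{n})$ is the positive root of $F_{\varepsilon_{n}}=E_{n}$. By continuity of $a$ (A6), $\widetilde{\lambda}_{n}^{-1/2}\to\sqrt{a(0)}/\sqrt{\lambda}$. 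Hence it suffices to show that $I_{n}\to\pi/\sqrt{2}$, which is the value of the integral for the quadratic potential $F(u)=u^{2}/2$, since
\[
\int_{0}^{\sqrt{2E}}(E-u^{2}/2)^{-1/2}du=\pi/\sqrt{2}
\]
independently of $E$. The case of $\tau_{-}$ is identical by oddness (B5).

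\textbf{Rescaling.} Put $u=U_{n}s$ with $U_{n}=U_{+}^{n}(E_{n})$. Since $F_{\varepsilon_{n}}(U_{n})=E_{n}$,
\[
I_{n}=\int_{0}^{1}\frac{U_{n}\,ds}{\sqrt{F_{\varepsilon_{n}}(U_{n})-F_{\varepsilon_{n}}(U_{n}s)}}=\int_{0}^{1}\Big(\frac{F_{\varepsilon_{n}}(U_{n})-F_{\varepsilon_{n}}(U_{n}s)}{U_{n}^{2}}\Big)^{-1/2}ds.
\]
First we check that $U_{n}\to0$. By (B4), $f_{\varepsilon_{n}}(u)\ge$ the chord value on $[0,1]$, so $f_{\varepsilon_{n}}(u)\geq f_{\varepsilon_{n}}(1)u$ there, and $f_{\varepsilon_{n}}(1)\to f(1)>0$ (positivity follows from (A3)--(A4) in the regime where the positive roots exist). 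Therefore $F_{\varepsilon_{n}}(u)\ge c u^{2}$ near $0$ uniformly in $n$, and $E_{n}\to0$ forces $U_{n}\to0$.

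\textbf{Main obstacle.} The key step is the uniform estimate $f_{\varepsilon_{n}}(u)/u\to1$ as $n\to\infty$, uniformly on $0<u\le U_{n}$. Concavity gives $f_{\varepsilon_{n}}(u)/u\le f_{\varepsilon_{n}}'(0)=1$. For the lower bound, concavity again gives $f_{\varepsilon_{n}}(u)/u\ge f_{\varepsilon_{n}}(\delta)/\delta$ for $u\le\delta$. Moreover $f_{\varepsilon_{n}}(\delta)\to f(\delta)$ uniformly by (\ref{uniformconverg}), and $f(\delta)/\delta\to1$ as $\delta\to0$ by (A3). So, given $\eta>0$, we choose $\delta$ with $f(\delta)/\delta>1-\eta$ and then $n$ large so that $U_{n}<\delta$ and $f_{\varepsilon_{n}}(\delta)/\delta>1-2\eta$. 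This yields $(1-2\eta)u\le f_{\varepsilon_{n}}(u)\le u$ on $[0,U_{n}]$.

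\textbf{Conclusion.} Integrating the two-sided bound from $U_{n}s$ to $U_{n}$ gives
\[
(1-2\eta)\tfrac{1-s^{2}}{2}\le\frac{F_{\varepsilon_{n}}(U_{n})-F_{\varepsilon_{n}}(U_{n}s)}{U_{n}^{2}}\le\tfrac{1-s^{2}}{2}.
\]
Hence $I_{n}$ lies between $\pi/\sqrt2$ and $(1-2\eta)^{-1/2}\pi/\sqrt2$. The integrable singularity at $s=1$ is handled by this explicit comparison, so no dominated convergence subtlety arises. Letting $\eta\to0$ gives $I_{n}\to\pi/\sqrt{2}$, and therefore
\[
\tau_{\pm}^{d_{n},\varepsilon_{n}}(E_{n})\to\frac{\sqrt{a(0)}\,\pi}{\sqrt{2\lambda}}.
\]
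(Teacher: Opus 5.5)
Your proof is correct and is essentially the paper's argument. Both rest on the same ingredients:
\begin{itemize}
\item uniform-in-$n$ linear bounds on $f_{\varepsilon_n}(u)/u$ near $0$, coming from concavity (B4) together with (\ref{uniformconverg}) and $f'(0)=1$;
\item the fact that $U_+^{\varepsilon_n}(E_n)\to 0$;
\item comparison with the quadratic potential, whose time map is $\pi/\sqrt{2}$;
\item the limit $a(d_n)\to a(0)$.
\end{itemize}
The only real difference is the substitution. The paper uses $E_n y^2=\mathcal{F}_{\varepsilon_n}(u)$ and bounds $y/f_{\varepsilon_n}(u)$, while you rescale $u=U_n s$ and integrate the derivative bound over $[U_n s,U_n]$. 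Your version is slightly cleaner and gives the sharp upper bound $f_{\varepsilon_n}(u)\le u$ for free.

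There is one slip, in your proof that $U_n\to 0$: you use $f(1)>0$, which need not hold. For example, $f(u)=u-4u^3$ is odd and satisfies (A1)--(A5), yet $f(1)<0$.

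The fix is immediate. Your own bound $(1-2\eta)u\le f_{\varepsilon_n}(u)$ on $[0,\delta]$ holds for all large $n$ without reference to $U_n$. Integrating it gives $\mathcal{F}_{\varepsilon_n}(u)\ge(1-2\eta)u^2/2$ on $[0,\delta]$, with $\mathcal{F}_{\varepsilon_n}$ strictly increasing there. So once $E_n<(1-2\eta)\delta^2/2$, the first positive root satisfies $U_n\le\sqrt{2E_n/(1-2\eta)}$. Then establish this before choosing $n$ so that $U_n<\delta$. The same integrated inequality also yields the fact the paper cites from the literature at that point.
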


\begin{proof}
Let us consider $f_{d_{n},\varepsilon_{n}}(u)=\frac{\lambda f_{\varepsilon
_{n}}(u)}{a(d_{n})}$. In view of property $(B4)$ and (\ref{uniformconverg}),
since $f_{\varepsilon_{n}}^{\prime}(0)=f^{\prime}(0)=1$ and $f_{\varepsilon
_{n}}(0)=f(0)=0,$ given $\gamma\in(0,1)$ there exists $\delta>0$ (independent
of $\varepsilon_{n}$) such that
\begin{equation}%
\begin{array}
[c]{c}%
(1-\gamma)u\leq f_{\varepsilon_{n}}(u)\leq(1+\gamma)u,\quad\text{ for any
}u\in(0,\delta).\\
\frac{1}{1+\gamma}\leq\frac{u}{f_{\varepsilon_{n}}(u)}\leq\frac{1}{1-\gamma
},\quad\text{ for any }u\in(0,\delta).
\end{array}
\label{continu1}%
\end{equation}

The sequence $\mathcal{F}_{\varepsilon_{n}}\left(  \text{\textperiodcentered
}\right)  $ converges uniformly to $\mathcal{F}\left(
\text{\textperiodcentered}\right)  $ in compact sets. Moreover, as $U_{+}(E)$
is continuous and using \cite[p. 60]{souza}, given $\delta>0$, there exists
$\eta>0$ such that $U_{+}^{\varepsilon_{n}}(E)\leq\delta$ for any $0<E\leq
\eta$. Now, if we integrate the first inequality in (\ref{continu1}) between
$0$ and $u$ we obtain
\[
\frac{1}{2}(1-\gamma)u^{2}\leq\mathcal{F}_{\varepsilon_{n}}(u)\leq\frac{1}%
{2}(1+\gamma)u^{2},\quad\text{ for any }0\leq u\leq\delta.
\]
Using the change of variable $E_{n}y^{2}=\mathcal{F}_{\varepsilon_{n}}(u)$, we
have
\[%
\begin{array}
[c]{c}%
\left(  \frac{1-\gamma}{2E_{n}}\right)  ^{{1}/{2}}u\leq y\leq\left(
\frac{1+\gamma}{2E_{n}}\right)  ^{{1}/{2}}u,\quad\text{ if }0<E_{n}\leq
\eta,\ 0\leq y\leq1.
\end{array}
\]
Dividing the previous expression by $\sqrt{\frac{\lambda}{a\left(
d_{n}\right)  }}f_{d_{n},\varepsilon_{n}}(u)$ and using (\ref{continu1}) we
obtain
\[%
\begin{array}
[c]{c}%
\left(  \frac{a(d_{n})(1-\gamma)}{2\lambda E_{n}(1+\gamma)^{2}}\right)
^{{1}/{2}}\leq\frac{\sqrt{a\left(  d_{n}\right)  }y}{\sqrt{\lambda}%
f_{d_{n},\varepsilon_{n}}(u)}\leq\left(  \frac{a(d_{n})(1+\gamma)}{2\lambda
E_{n}(1-\gamma)^{2}}\right)  ^{{1}/{2}}\!\!\!,\!\!\text{if }0<E_{n}\leq
\eta,\ 0\leq y\leq1.
\end{array}
\]
Now if we multiply by $2\sqrt{E_{n}}(1-y^{2})^{-\frac{1}{2}}$ and integrate
from $0$ to $1$, we get
\[%
\begin{array}
[c]{c}%
\!\!\pi\left(  \frac{a(d_{n})(1-\gamma)}{2\lambda(1+\gamma)^{2}}\right)
^{{1}/{2}}\leq\tau_{+}^{\varepsilon_{n}}(E_{n})\leq\pi\left(  \frac
{a(d_{n})(1+\gamma)}{2\lambda(1-\gamma)^{2}}\right)  ^{{1}/{2}},\!\quad\text{
if }0<E_{n}\leq\eta.
\end{array}
\]
Then the theorem follows as $a\left(  d_{n}\right)  \rightarrow a\left(
0\right)  $ when $n\rightarrow\infty$. The proof for $\tau_{-}^{\varepsilon
_{n}}$ is analogous.
\end{proof}

\bigskip

Under the conditions of Theorem \ref{solconver}, if (A8) is satisfied and
\begin{equation}
a\left(  0\right)  \pi^{2}k^{2}<\lambda\leq a\left(  0\right)  \pi^{2}\left(
k+1\right)  ^{2},\ k\in\mathbb{Z},\ k\geq0, \label{CondLambda2}%
\end{equation}
holds, then by Theorem \ref{FixedPointsExist} problem (\ref{problemaprox}) has
exactly $2k+1$ fixed points (denoted by $v_{0}=0,\ \ v_{1,d_{1}^{\varepsilon
_{n}}}^{\pm},...,v_{k,d_{k}^{\varepsilon_{n}}}^{\pm}$) and $v_{m,d_{m}%
^{\varepsilon_{n}}}^{\pm}$ has $m+1$ zeros in $[0,1]$ for each $1\leq m\leq
k$. The same is valid for problem (\ref{problem1}) and we denote the $2k+1$
fixed points by $v_{0}=0,\ u_{1,d_{1}^{\ast}}^{\pm},...,u_{k,d_{k}^{\ast}%
}^{\pm}$.

\begin{lemma}
\label{lemaconvergenciaA0} Assume the conditions of Theorem \ref{solconver},
(A8) and (\ref{CondLambda2}). Let $m\in\mathbb{N}$, $1\leq m\leq k,$ be fixed.
Then $v_{m,d_{m}^{\varepsilon_{n}}}^{+}$ (resp. $v_{m,d_{m}^{\varepsilon_{n}}%
}^{-})$ do not converge to $0$ in $H_{0}^{1}(\Omega)$ as $\varepsilon
_{n}\rightarrow0$.
\end{lemma}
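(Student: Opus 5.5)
We argue by contradiction, using the time-map characterization of the Chafee-Infante equilibria together with Lemma \ref{ConvergTao}. Suppose that, along a subsequence, $v_n:=v_{m,d_m^{\varepsilon_n}}^{+}\rightarrow0$ in $H_0^1(\Omega)$. Then $d_n:=d_m^{\varepsilon_n}=\Vert v_n\Vert_{H_0^1}^2\rightarrow0$. By the embedding $H_0^1(\Omega)\subset C([0,1])$ we also get $\Vert v_n\Vert_{C([0,1])}\rightarrow0$.

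Next we identify the energy level. Each $v_n$ solves (\ref{IV}) with $f$ replaced by $f_{\varepsilon_n}$ and $\widetilde\lambda=\lambda/a(d_n)$. It therefore lies on the level set (\ref{Level}) for some $E_n>0$, and $E_n=\mathcal{F}_{\varepsilon_n}(\max v_n)$, the value at the point where $v_n'=0$ on the first positive hump. Since $\max|v_n|\rightarrow0$ and $\mathcal F_{\varepsilon_n}\rightarrow\mathcal F$ uniformly on compacts with $\mathcal F_{\varepsilon_n}(u)\le\frac12(1+\gamma)u^2$ near $0$ (the bound derived in the proof of Lemma \ref{ConvergTao}), we get $E_n\rightarrow0$. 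Thus the hypotheses of Lemma \ref{ConvergTao} hold, and
\[
\tau_\pm^{d_n,\varepsilon_n}(E_n)\rightarrow\frac{\sqrt{a(0)}\,\pi}{\sqrt{2\lambda}}.
\]

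Now we use the root equations. As in the proof of Lemma \ref{Increasing}, which invokes Theorem 7 of \cite{CabCarvMarVal} and applies equally to $f_{\varepsilon_n}$ because it satisfies (B1)-(B5) and (A5), $E_n$ satisfies one of the equations (\ref{Roots}) with $\tau_\pm$ replaced by $\tau_\pm^{d_n,\varepsilon_n}$. Since $f_{\varepsilon_n}$ is odd, $\tau_+=\tau_-$, so every case reduces to
\[
m\,\tau_+^{d_n,\varepsilon_n}(E_n)=\frac{1}{2\sqrt2},
\]
where $m$ is the number of half-intervals, i.e. the solution has $m+1$ zeros and $m$ humps. The precise bookkeeping of the constant should be checked against the normalization in (\ref{Roots}). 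Here the time-map is $\sqrt2$ times the half-hump length, and the $m$ humps of length $2T_+$ fill $[0,1]$. Passing to the limit gives
\[
m\,\frac{\sqrt{a(0)}\,\pi}{\sqrt{2\lambda}}=\frac{1}{2\sqrt2}\cdot(\text{normalization}),
\]
which translates into $\lambda=a(0)\pi^2m^2$. Equivalently, the limiting hump length $\pi\sqrt{a(0)/\lambda}$ must equal $1/m$.

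This contradicts (\ref{CondLambda2}), because $m\le k$ gives $\lambda>a(0)\pi^2k^2\ge a(0)\pi^2m^2$. The case of $v^-$ follows from $v^-(x)=v^+(1-x)$, which has the same $H_0^1$ norm.

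The main obstacle is the step $E_n\rightarrow0$ together with the bookkeeping in (\ref{Roots}), so that the limit yields exactly $\lambda=a(0)\pi^2m^2$. For the first point I will use the uniform quadratic bounds on $\mathcal F_{\varepsilon_n}$ near $0$, which hold uniformly in $n$ thanks to (B3)-(B4) and (\ref{uniformconverg}).
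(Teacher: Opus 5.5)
Your strategy is the paper's: assume $v_n\to0$, deduce $d_n\to0$ and $E_n\to0$, apply Lemma \ref{ConvergTao}, and contradict the root equations (\ref{Roots}) using $\lambda>a(0)\pi^2k^2\ge a(0)\pi^2m^2$. Two of your choices differ harmlessly from the paper. You get $E_n\to0$ from $E_n=\mathcal{F}_{\varepsilon_n}(\max v_n)$, which needs only uniform convergence; the paper instead uses $C^2$ convergence and $v_n'(0)\to0$. You also argue along a subsequence, which is exactly what Lemma \ref{convergfixedpoint} later needs.

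The one real defect is the step you left unchecked, and there the constant matters. Setting $\tau_+=\tau_-=:\tau_n$ is legitimate, since $f_{\varepsilon_n}$ is odd. The equations in (\ref{Roots}) then become $(2j-1)\tau_n=1/\sqrt2$ for $m=2j-1$ and $2j\,\tau_n=1/\sqrt2$ for $m=2j$. That is,
\[
m\,\tau_n=\frac{1}{\sqrt2},
\]
not $1/(2\sqrt2)$. Your own normalization gives the same thing: by (\ref{Level}), $T_+=\tau_+/\sqrt2$, and $m$ humps of length $2T_+=\sqrt2\,\tau_+$ fill $[0,1]$. Passing to the limit with Lemma \ref{ConvergTao} gives $m\pi\sqrt{a(0)}/\sqrt{2\lambda}=1/\sqrt2$, i.e.\ $\lambda=a(0)\pi^2m^2$, which (\ref{CondLambda2}) excludes. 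Your displayed equation would instead give $\lambda=a(0)\pi^2(2m)^2$. Condition (\ref{CondLambda2}) does not exclude this when $2m=k+1$ (e.g.\ $k=m=1$, $\lambda=4a(0)\pi^2$), so as written the contradiction fails in that case. Drop the ``(normalization)'' placeholder and use the correct constant. The paper sidesteps this bookkeeping. Since $\pi\sqrt{a(0)}/\sqrt{2\lambda}<1/(\sqrt2\,m)$, for large $n$ both $\tau_\pm^{d_n,\varepsilon_n}(E_n)<1/(\sqrt2\,m)$. Hence the left-hand side of each equation in (\ref{Roots}) is strictly less than $1/\sqrt2$, and neither oddness nor a limit equation is needed.
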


\begin{proof}
Assume that $v_{m,d_{m}^{\varepsilon_{n}}}^{+}\rightarrow0$ in $H_{0}%
^{1}\left(  0,1\right)  $. Then it converges to $0$ in $C\left(  [0,1]\right)
$ and the equality%
\[
-\frac{d^{2}v_{m,d_{m}^{\varepsilon_{n}}}^{+}}{dx^{2}}\left(  x\right)
=\frac{\lambda f_{\varepsilon_{n}}\left(  v_{m,d_{m}^{\varepsilon_{n}}}%
^{+}\left(  x\right)  \right)  }{a(d_{m}^{\varepsilon_{n}})}%
\]
implies that $v_{m,d_{m}^{\varepsilon_{n}}}^{+}\rightarrow0$ in $C^{2}\left(
[0,1]\right)  $. In particular, $\dfrac{dv_{m,d_{m}^{\varepsilon_{n}}}^{+}%
}{dx}\left(  0\right)  \rightarrow0$ and $d_{m}^{\varepsilon_{n}}=\left\Vert
v_{m,d_{m}^{\varepsilon_{n}}}^{+}\right\Vert _{H_{0}^{1}}^{2}\rightarrow0$.
The value $E_{n}$ corresponding to the fixed point $v_{m,d_{m}^{\varepsilon
_{n}}}^{+}$ is equal to $\dfrac{a\left(  d_{m}^{\varepsilon_{n}}\right)
}{2\lambda}\dfrac{dv_{m,d_{m}^{\varepsilon_{n}}}^{+}}{dx}\left(  0\right)  $,
so $E_{n}\rightarrow0$. We will show that this is not possible. We know by
Lemma \ref{ConvergTao} that%
\[
\lim_{n\rightarrow\infty}\tau_{\pm}^{d_{m}^{\varepsilon_{n}},\varepsilon_{n}%
}\left(  E_{n}\right)  =\frac{\pi\sqrt{a(0)}}{\sqrt{2\lambda}}.
\]
Also, since $v_{m,d_{m}^{\varepsilon_{n}}}^{+}$ is a fixed point with
$d=d_{m}^{\varepsilon_{n}}$ one of the following conditions has to be
satisfied (see (\ref{Roots})):
\begin{equation}
j\tau_{+}^{d_{m}^{\varepsilon_{n}},\varepsilon_{n}}\left(  E_{n}\right)
+\left(  j-1\right)  \tau_{-}^{d_{m}^{\varepsilon_{n}},\varepsilon_{n}}\left(
E_{n}\right)  =\left(  \frac{1}{2}\right)  ^{\frac{1}{2}}, \label{63}%
\end{equation}%
\begin{equation}
j\tau_{-}^{d_{m}^{\varepsilon_{n}},\varepsilon_{n}}\left(  E_{n}\right)
+\left(  j-1\right)  \tau_{+}^{d_{m}^{\varepsilon_{n}},\varepsilon_{n}}\left(
E_{n}\right)  =\left(  \frac{1}{2}\right)  ^{\frac{1}{2}},\text{ if }m=2j-1
\label{64}%
\end{equation}%
\begin{equation}
j\tau_{+}^{d_{m}^{\varepsilon_{n}},\varepsilon_{n}}(E_{n})+j\tau_{-}%
^{d_{m}^{\varepsilon_{n}},\varepsilon_{n}}(E_{n})=\left(  \frac{1}{2}\right)
^{\frac{1}{2}},\text{ if }m=2j. \label{655}%
\end{equation}
Since $E_{n}\rightarrow0$ and $\lambda>k^{2}\pi^{2}a(0)\geq m^{2}\pi^{2}a(0)$,
there exists $\varepsilon_{n_{0}}$ such that
\[
\tau_{\pm}^{d_{m}^{\varepsilon_{n_{0}}},\varepsilon_{n_{0}}}(E_{n_{0}}%
)<\frac{1}{\sqrt{2}m}.
\]
Hence, neither of (\ref{63})-(\ref{655}) is possible.
\end{proof}

\bigskip

\begin{lemma}
\label{convergfixedpoint}Assume the conditions of Theorem \ref{solconver},
(A8) and (\ref{CondLambda2}). Let $m\in\mathbb{N}$, $1\leq m\leq k,$ be fixed.
Then $v_{m,d_{m}^{\varepsilon_{n}}}^{+}$ (resp. $v_{m,d_{m}^{\varepsilon_{n}}%
}^{-})$ converges to $u_{m,d_{m}^{\ast}}^{+}$ in $H_{0}^{1}(\Omega)$ (resp.
$u_{m,d_{m}^{\ast}}^{-}$) as $\varepsilon_{n}\rightarrow0$.
\end{lemma}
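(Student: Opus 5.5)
The plan is a compactness-plus-uniqueness argument: extract a convergent subsequence, identify its limit as a fixed point of (\ref{problem1}) with exactly $m+1$ simple zeros and positive initial slope, and then use the uniqueness in Theorem \ref{FixedPointsExist} to show the whole sequence converges. I describe the case $v_{m,d_{m}^{\varepsilon_{n}}}^{+}$; the case $v^{-}$ is identical, or follows from $v^{-}(x)=v^{+}(1-x)$.

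\textbf{Compactness.} Every fixed point of (\ref{problemaprox}) belongs to $\mathcal{A}_{\varepsilon_{n}}$, because the constant trajectory is a bounded complete trajectory. Hence Lemma \ref{remark28} applies: from any subsequence of $v_{n}:=v_{m,d_{m}^{\varepsilon_{n}}}^{+}$ we can extract a further subsequence with $v_{n}\rightarrow v$ in $H_{0}^{1}(\Omega)$. By Corollary \ref{relcompact} the convergence also holds in $C^{1}([0,1])$.

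\textbf{Identifying the limit.} The constant trajectories $v_{n}$ converge in $C([-T,T],H_{0}^{1})$ to the constant trajectory $v$, so $v$ is a fixed point of (\ref{problem1}). This can also be checked directly. We have $d_{m}^{\varepsilon_{n}}=\Vert v_{n}\Vert_{H_{0}^{1}}^{2}\rightarrow\Vert v\Vert_{H_{0}^{1}}^{2}$, and $a$ is continuous. Moreover $f_{\varepsilon_{n}}(v_{n})\rightarrow f(v)$ uniformly by (\ref{uniformconverg}), since the $v_{n}$ are uniformly bounded in $C([0,1])$. Passing to the limit in the weak formulation $(v_{n}',\psi')=\frac{\lambda}{a(d_{m}^{\varepsilon_{n}})}(f_{\varepsilon_{n}}(v_{n}),\psi)$ then shows that $v$ solves (\ref{elliptic}). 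By Lemma \ref{lemaconvergenciaA0}, $v\neq0$. Since $h\equiv0$ and (A8), (\ref{CondLambda2}) hold, Theorem \ref{FixedPointsExist} gives that $v\in\{u_{j,d_{j}^{\ast}}^{\pm}:1\leq j\leq k\}$.

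\textbf{Matching the index (main obstacle).} We must show $v=u_{m,d_{m}^{\ast}}^{+}$. The natural tool is the zero count under $C^{1}$ convergence. The limit $v$ is a nontrivial solution of a second order ODE with $f(0)=0$, so all its zeros are simple: if $v(x_{0})=v'(x_{0})=0$, then uniqueness for the initial value problem forces $v\equiv0$. This uniqueness holds because $f$ is concave/convex and differentiable at $0$, hence locally Lipschitz near $0$, and we would verify this point carefully. Given simple zeros, $C^{1}$ convergence implies that for large $n$ the function $v_{n}$ has the same number of zeros as $v$. Indeed, near each simple zero of $v$ the derivative $v_{n}'$ has a fixed sign, so $v_{n}$ has exactly one zero there, and away from these neighbourhoods $|v_{n}|$ is bounded below. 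The endpoints $x=0,1$ are handled by the same reasoning, using $v'(0),v'(1)\neq0$. Since $v_{n}$ has $m+1$ zeros, $v$ has $m+1$ zeros, so $v=u_{m,d_{m}^{\ast}}^{\pm}$. The sign is fixed by $v_{n}'(0)>0$, which gives $v'(0)\geq0$, and then $v'(0)>0$ by simplicity of the zero at $0$. Hence $v=u_{m,d_{m}^{\ast}}^{+}$.

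\textbf{Conclusion.} The limit is the same for every convergent subsequence, so the whole sequence $v_{m,d_{m}^{\varepsilon_{n}}}^{+}$ converges to $u_{m,d_{m}^{\ast}}^{+}$ in $H_{0}^{1}(\Omega)$.
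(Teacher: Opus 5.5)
Your proposal is correct and follows essentially the paper's own argument. Both use $C^{1}$-compactness from Corollary \ref{relcompact}, pass to the limit in the equation, get nontriviality of the limit from Lemma \ref{lemaconvergenciaA0}, identify it among the $2k+1$ equilibria of Theorem \ref{FixedPointsExist}, and match the number of simple zeros under $C^{1}$ convergence; you also fix the sign through $v^{\prime}(0)>0$, which the paper leaves implicit. One small slip: the alternative route via $v^{-}(x)=v^{+}(1-x)$ is false for odd $m$ (for $m=1$ the reflection of the positive solution is the positive solution itself), so for the minus case rely on the symmetric argument or on $v^{-}=-v^{+}$, which holds here because $f$ is odd.
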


\begin{proof}
We consider $v_{m,d_{m}^{\varepsilon_{n}}}^{+}.$ In view of Corollary
\ref{relcompact}, $v_{m,d_{m}^{\varepsilon_{n}}}^{+}$ is relatively compact in
$C^{1}\left(  [0,1]\right)  ,$ so up to a subsequence $v_{m,d_{m}%
^{\varepsilon_{n}}}^{+}\rightarrow v$ strongly in $C^{1}([0,1])$ and
$d_{m}^{\varepsilon_{n}}\rightarrow d^{\ast}=\left\Vert v\right\Vert
_{H_{0}^{1}}^{2}$. The proof will be finished if we prove that $v=u_{m,d_{m}%
^{\ast}}^{+}$. We observe that since in such a case every subsequence would
have the same limit, the whole sequence would converge to $u_{m,d_{m}^{\ast}%
}^{+}$.

In view of (\ref{uniformconverg}) $f_{\varepsilon_{n}}(v_{m,d_{m}%
^{\varepsilon_{n}}}^{+})$ converges to $f\left(  v\right)  $ in $C([0,1]).$ It
follows that
\[
-\frac{\partial^{2}v}{\partial x^{2}}=\frac{\lambda f\left(  v\right)
}{a(\left\Vert v\right\Vert _{H_{0}^{1}}^{2})}%
\]
and $v$ is a solution of (\ref{elliptic}), so $v$ is a fixed point of
(\ref{problem1}). We need to prove that $v=u_{m,d_{m}^{\ast}}^{+}$. By Lemma
\ref{lemaconvergenciaA0} $v\not =0$, and then $v=u_{j,d_{j}^{\ast}}^{\pm}$ for
some $1\leq j\leq k$. Since $u_{j,d_{j}^{\ast}}^{\pm}$ has $j+1$ simple zeros,
the convergence $v_{m,d_{m}^{\varepsilon_{n}}}^{+}\rightarrow u_{j,d_{j}%
^{\ast}}^{\pm}$ in $C^{1}([0,1])$ implies that $v_{m,d_{m}^{\varepsilon_{n}}%
}^{+}$ has $j+1$ zeros for $n\geq N$. But $v_{m,d_{m}^{\varepsilon_{n}}}^{+}$
possesses $m+1$ zeros in $[0,1]$. Thus, $m=j$.

For the sequence $v_{m,d_{m}^{\varepsilon_{n}}}^{-}$ the proof is analogous.
\end{proof}

\subsection{Instability}

We will prove that the fixed points $0$ and $u_{k,d_{k}^{\ast}}^{\pm}$,
$k\geq2,$ are unstable under some additional assumptions on the functions $f$
and $a$. For this aim we need to use the approximative problems
(\ref{problemaprox}).

\begin{theorem}
\label{Unstable}Assume that the conditions (A1)-(A8), $h=0$,
(\ref{CondLambda2}) with $k\geq1$ are satisfied and let, moreover, the
function $f\left(  \text{\textperiodcentered}\right)  $ be odd and $a\left(
\text{\textperiodcentered}\right)  $ be globally Lipschitz continuous. Then
the equilibria $v_{0}=0$ and $u_{j,d_{j}^{\ast}}^{\pm}$, $2\leq j\leq k$ (if
$k\geq2$), are unstable.
\end{theorem}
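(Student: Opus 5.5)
The plan is to prove instability for the smooth approximating problems (\ref{problemaprox}), where linearization is available, and then pass to the limit $\varepsilon_{n}\to0$. The passage to the limit uses Theorem \ref{solconver}, Lemma \ref{remark28} and Lemma \ref{convergfixedpoint}.

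\textbf{Step 1: instability for the approximations.} By (B1) and the global Lipschitz continuity of $a$, problem (\ref{problemaprox}) is well posed in $H_{0}^{1}(\Omega)$ and generates a $C^{1}$ semigroup. Around an equilibrium $v=v_{j,d_{j}^{\varepsilon_{n}}}^{\pm}$ the linearized operator is
\[
L\psi=a(d)\psi_{xx}+\lambda f_{\varepsilon_{n}}^{\prime}(v)\psi+2a^{\prime}(d)(v_{x},\psi_{x})v_{xx},\qquad d=\Vert v\Vert_{H_{0}^{1}}^{2}.
\]
If $a$ is not differentiable, one first works with the equivalent time-rescaled problem (\ref{w}), or argues directly by comparison. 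For $v=0$ the last term vanishes. Since $\lambda>a(0)\pi^{2}$, the first eigenfunction $\sin(\pi x)$ gives a positive eigenvalue, so $0$ is unstable.

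For $j\geq2$, I would look for a test direction $\psi$ with $(v_{x},\psi_{x})=0$, so that the nonlocal term drops out. On that subspace $L$ coincides with the local Sturm--Liouville operator $a(d)\partial_{xx}+\lambda f_{\varepsilon_{n}}^{\prime}(v)$. For the local Chafee--Infante equation, the equilibrium with $j+1$ zeros is known to have $j$ positive eigenvalues. Hence there is a $j$-dimensional positive eigenspace, and for $j\geq2$ it contains a nonzero element orthogonal (in $H_{0}^{1}$) to $v$. By the min--max principle, $L$ therefore has a positive eigenvalue, and the principle of linearized instability gives that $v_{j,d_{j}^{\varepsilon_{n}}}^{\pm}$ is unstable. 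This is also why $j=1$ is excluded.

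\textbf{Step 2: uniform quantitative instability.} I need instability with a radius independent of $n$. The plan is to show there is $\delta>0$ such that for every $\eta>0$ and every $n$ there exist $y_{n}$ with $\Vert y_{n}-v_{n}\Vert_{H_{0}^{1}}<\eta$ and a time $t_{n}$ with $\Vert u_{\varepsilon_{n}}(t_{n})-v_{n}\Vert\geq\delta$.

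\begin{itemize}
\item By Lemma \ref{convergfixedpoint}, $v_{n}\to u_{j,d_{j}^{\ast}}^{\pm}$ in $C^{1}$, and $f_{\varepsilon_{n}}^{\prime}\to f^{\prime}$ locally uniformly away from $0$ because of concavity. Hence the positive eigenvalue is bounded below uniformly in $n$.
\item Alternatively, I would use the unstable manifold. Its points $y_{n}$ lie in $\mathcal{A}_{\varepsilon_{n}}$ and have backward trajectories converging to $v_{n}$. By Remark \ref{Homoclinic} and the Lyapunov function, the forward trajectory leaves a fixed neighbourhood and converges to an equilibrium with lower energy.
\end{itemize}

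\textbf{Step 3: limit.} I would take points $y_{n}$ on the unstable sets with $y_{n}\to u_{j,d_{j}^{\ast}}^{\pm}$, together with bounded complete trajectories $\phi_{n}$ satisfying $\phi_{n}(0)$ at distance $\delta$ from $v_{n}$ and $\phi_{n}(t)\to v_{n}$ as $t\to-\infty$. By Lemma \ref{remark28}, these converge to a bounded complete trajectory $\phi$ of (\ref{problem1}) with $\Vert\phi(0)-u_{j,d_{j}^{\ast}}^{\pm}\Vert\geq\delta$. It remains to show that $\phi(t)\to u_{j,d_{j}^{\ast}}^{\pm}$ as $t\to-\infty$. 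For this I use energy: $E(\phi_{n}(t))\leq E(v_{n})$ passes to the limit. By Theorem \ref{Structure2}, $\alpha(\phi)$ is an equilibrium with energy at most $E(u_{j}^{\pm})$. Combining this with lap-number control, which gives at most $j+1$ zeros as in Lemma \ref{NoConnection2}, and with the uniform backward closeness on the local unstable manifold, I would conclude that the limit is $u_{j}^{\pm}$. Then $\phi(-t)$ is arbitrarily close to the equilibrium while $\phi(0)$ is at distance $\delta$, which is instability for the multivalued semiflow.

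The main obstacle is Step 2: getting uniform-in-$n$ estimates on the local unstable manifolds, that is, a uniform spectral gap and uniform $C^{1}$ bounds on the nonlinearity near $v_{n}$. These are needed so that the escape distance $\delta$ does not collapse as $\varepsilon_{n}\to0$.
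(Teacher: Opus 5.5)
\textbf{The limit step does not close.} Steps 2--3 never show that the limit trajectory satisfies $\phi(t)\to u_{j,d_j^{\ast}}^{\pm}$ as $t\to-\infty$. The bound $E(\phi(t))\le E(u_{j,d_j^{\ast}}^{\pm})$ together with the lap-number bound only places $\alpha(\phi)$ among equilibria with at most $j-1$ sign changes and no larger energy. That does not exclude $u_{m,d_m^{\ast}}^{\pm}$ with $m<j$, nor, at this stage, $0$. The remaining ingredient, ``uniform backward closeness on the local unstable manifold'', is exactly the uniform-in-$n$ estimate you leave open.

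\textbf{The missing idea.} No uniform estimate is needed, because the escape radius can be taken from the limit problem.
\begin{itemize}
\item Since (\ref{problem1}) has finitely many equilibria, fix $\delta>0$ smaller than a third of the distance from $u_{j,d_j^{\ast}}^{+}$ to any other equilibrium.
\item Put $v_n:=v_{j,d_j^{\varepsilon_n}}^{+}$. By Lemmas~\ref{lemaconvergenciaA0}--\ref{convergfixedpoint}, for large $n$ every other equilibrium of (\ref{problemaprox}) lies at distance $>\delta$ from $v_n$.
\item The non-stationary trajectory $u^{\varepsilon_n}$ emanating from $v_n$ converges forward to one of those equilibria (Lyapunov function). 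So it has a first exit time $\tau_n$ from the $\delta$-ball around $v_n$.
\item Set $\phi_n(t)=u^{\varepsilon_n}(t+\tau_n)$. Then $\Vert\phi_n(t)-v_n\Vert_{H_0^1}\le\delta$ for all $t\le0$, with equality at $t=0$.
\item The limit $\phi$ from Lemma~\ref{remark28} stays in the closed $\delta$-ball around $u_{j,d_j^{\ast}}^{+}$ for $t\le0$ and is not stationary. By Theorem~\ref{theorem12} and isolation, $\phi(t)\to u_{j,d_j^{\ast}}^{+}$ as $t\to-\infty$.
\end{itemize}
Your normalization fixes only $\Vert\phi_n(0)-v_n\Vert=\delta$ and does not confine $\phi_n$ to the ball for $t\le0$; that is why you lose control of $\alpha(\phi)$. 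The paper relies on this same exit-time and isolating-neighbourhood mechanism, and its last case is exactly the argument above. It first passes to the limit without normalizing. If the limit is anchored at another equilibrium, it builds a finite chain of connections by exit times, using (\ref{Lyapunov}) to exclude cycles, until it reaches $u_{j,d_j^{\ast}}^{+}$.

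\textbf{Step 1 miscounts the eigenvalues.} The equilibrium with $j+1$ zeros in $[0,1]$ has exactly $j-1$ positive eigenvalues for the local operator $a(d)\partial_{xx}+\lambda f_{\varepsilon_n}^{\prime}(v)$, not $j$. With your count, the positive Chafee--Infante equilibrium ($j=1$) would be unstable, which is false. Since $(v_x,\psi_x)=-(v_{xx},\psi)$, the nonlocal linearization equals the local operator minus the nonnegative rank-one term $2a^{\prime}(d)(v_{xx},\cdot)\,v_{xx}$. Your codimension-one argument therefore produces a positive eigenvalue only for $j\ge3$.

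For $j=2$ you must use that $f$ is odd. Then $v$ is odd about $x=\frac12$, while the principal eigenfunction $\psi_1$ of the local operator is even about $\frac12$. Hence $(v_x,\psi_{1,x})=0$, and $\psi_1$ is an eigenfunction of the full linearization.

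Moreover, with $a$ merely Lipschitz the semigroup is not $C^1$, so linearized instability does not apply as stated. Passing to (\ref{w}) does not remove $a$ from the nonlinearity. The paper avoids all of this by quoting Theorems 3.5--3.6 of \cite{carvalestef}. These give, for each $n$, a non-stationary bounded complete trajectory emanating from $0$ and from $v_{j,d_j^{\varepsilon_n}}^{\pm}$, $j\ge2$.
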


\begin{remark}
The condition that $a\left(  \text{\textperiodcentered}\right)  $ is globally
Lipschitz continuous could be dropped, as we can replace $a\left(
\text{\textperiodcentered}\right)  $ in (\ref{problemaprox}) by a sequence
$a_{\varepsilon_{n}}\left(  \text{\textperiodcentered}\right)  $ of globally
Lipschitz continuous functions.
\end{remark}

\begin{proof}
Problem (\ref{problemaprox}) generates a single-valued semigroup
$\{T_{\varepsilon_{n}}(t);t\geq0\}$ with a finite number of fixed
points:\ $v_{0}=0,\ v_{1,d_{1}^{\varepsilon_{n}}}^{\pm},...,v_{k,d_{k}%
^{\varepsilon_{n}}}^{\pm}$ \cite{carvalestef}. We know by Theorems 3.5 and 3.6
in \cite{carvalestef} that for any $v_{j,d_{j}^{\varepsilon_{n}}}^{+}$ with
$j\geq2$ and $v_{0}$ there exists a bounded complete trajectory
$u^{\varepsilon_{n}}$ such that
\[
u^{\varepsilon_{n}}(t)\rightarrow v_{j,d_{j}^{\varepsilon_{n}}}^{+}\quad\text{
as }t\rightarrow-\infty,\quad\text{ for }k\geq2,
\]
so $v_{0},\ v_{j,d_{j}^{\varepsilon_{n}}}^{+}$ are unstable. The same is valid
for $v_{j,d_{j}^{\varepsilon_{n}}}^{-}.$ On the other hand, by Lemma
\ref{convergfixedpoint} we have
\begin{equation}
v_{j,d_{j}^{\varepsilon_{n}}}^{\pm}\rightarrow u_{j,d_{j}^{\ast}}^{\pm},
\label{ConvergPuntoFijo}%
\end{equation}
where $u_{j,d_{j}^{\ast}}^{\pm}$ is a fixed point of problem (\ref{problem1})
with $j+1$ zeros in $[0,1]$. We prove the result for $u_{j,d_{j}^{\ast}}^{+}$.
For $u_{j,d_{j}^{\ast}}^{-}$ and $v_{0}$ the proof is the same.

By Lemma \ref{remark28} we obtain that up to a subsequence $u^{\varepsilon
_{n}}$ converges to a bounded complete trajectory $u$ of problem
(\ref{problem1}) in the space $C([-T,T],H_{0}^{1}(\Omega))$ for every $T>0$.
Thus, either $u\left(  \text{\textperiodcentered}\right)  $ is a fixed point
$v_{-1}$ or by Theorem \ref{theorem12} there exists a fixed point $v_{-1}$ of
problem (\ref{problem1}) such that
\[
u(t)\rightarrow v_{-1}\quad\text{ as }t\rightarrow-\infty\text{ in }H_{0}%
^{1}(\Omega).
\]
In the second case, if $v_{-1}=u_{j,d_{j}^{\ast}}^{+},$ the proof would be
finished, so let assume the opposite.

Assume first that either $u\left(  \text{\textperiodcentered}\right)  $ is not
a fixed point or it is a fixed point but $v_{-1}\not =u_{j,d_{j}^{\ast}}^{+}$.
We consider $r_{0}>0$ such that the neighborhood $\mathcal{O}_{2r_{0}}%
(v_{-1})$ does not contain any other fixed point of problem (\ref{problem1}).
For any $r\leq r_{0}$ we can choose $t_{r}\rightarrow-\infty$ and $n_{r}$ such
that $u^{\varepsilon_{n}}(t_{r})\in\mathcal{O}_{r}(v_{-1})$ for all $n\geq
n_{r}$. On the other hand, since $u^{\varepsilon_{n}}(t)\rightarrow
v_{j,d_{j}^{\varepsilon_{n}}}^{+}$, as $t\rightarrow-\infty,$ and
$v_{j,d_{j}^{\varepsilon_{n}}}^{+}\rightarrow u_{j,d_{j}^{\ast}}^{+}%
\not \in B_{2r_{0}}(v_{-1})$, there exists $t_{r}^{\prime}<t_{r}$ such that
\[
u^{\varepsilon_{n_{r}}}(t)\in\mathcal{O}_{r_{0}}(v_{-1})\text{ for }t\in
(t_{r}^{\prime},t_{r}],
\]%
\[
\left\Vert u^{\varepsilon_{n_{r}}}(t_{r}^{\prime})-v_{-1}\right\Vert
_{H_{0}^{1}}=r_{0}.
\]
Let first $t_{t}-t_{r}^{\prime}\rightarrow+\infty$. We define the sequence
$u_{1}^{\varepsilon_{n_{r}}}(t)=u^{\varepsilon_{n_{r}}}(t+t_{r}^{\prime}),$
which passing to a subsequence converges to a bounded complete trajectory
$\phi\left(  t\right)  $ such that $\phi\left(  t\right)  \in\mathcal{O}%
_{r_{0}}(v_{-1})$ for all $t\geq0$. As there is no other fixed point in
$\mathcal{O}_{2r_{0}}(v_{-1})$, $\phi\left(  t\right)  \rightarrow v_{-1}$ as
$t\rightarrow+\infty$. But $\left\Vert \phi\left(  0\right)  -v_{-1}%
\right\Vert =r_{0}$, so $\phi\left(  \text{\textperiodcentered}\right)  $ is
not a fixed point. Then $\phi\left(  t\right)  \rightarrow v_{-2}$ as
$t\rightarrow-\infty$, where $v_{-2}$ is a fixed point different from $v_{-1}%
$. Second, let $\left\vert t_{t}-t_{r}^{\prime}\right\vert \leq C$. Then put
$u_{1}^{\varepsilon_{n_{r}}}(t)=u^{\varepsilon_{n_{r}}}(t+t_{r}).\ $Passing to
a subsequence we have that
\begin{align*}
u_{1}^{\varepsilon_{n_{r}}}(0) &  \rightarrow v_{-1},\\
t_{r}-t_{r}^{\prime} &  \rightarrow t_{0},\ \text{as }r\rightarrow0.
\end{align*}
Also, $u_{1}^{\varepsilon_{n_{r}}}\left(  \text{\textperiodcentered}\right)  $
converges to a bounded complete trajectory $u^{1}\left(
\text{\textperiodcentered}\right)  $ of problem (\ref{problem1}) such that
$u^{1}(0)=v_{-1}.$ Let
\[
\psi_{1}(t)=\left\{
\begin{array}
[c]{c}%
u^{1}\left(  t\right)  \text{ if }t\leq0,\\
v_{-1}\text{ if }t\geq0.
\end{array}
\right.
\]
We note that $\left\Vert u^{1}(-t_{0})-v_{-1}\right\Vert _{H_{0}^{1}}=r_{0}$
implies that $u^{1}\left(  \text{\textperiodcentered}\right)  $ is not a fixed
point. Then $\psi_{1}$ is a bounded complete trajectory of problem
(\ref{problem1}) such that $\psi_{1}(t)\rightarrow v_{-2}\not =v_{-1}$ as
$t\rightarrow-\infty$. If $v_{-2}=u_{j,d_{j}^{\ast}}^{+},$ the proof is finished.

If $v_{-2}\not =u_{j,d_{j}^{\ast}}^{+}$, we continue constructing by the same
procedure a chain of connections in which the new fixed point is always
different from the previous ones, because the existence of the Lyapunov
function (\ref{Lyapunov}) avoids the existence of a cyclic chain of
connections. Since the number of fixed points is finite, at some moment we
obtain a bounded complete trajectory $\phi\left(  \text{\textperiodcentered
}\right)  $ such that $\phi\left(  t\right)  \rightarrow u_{j,d_{j}^{\ast}%
}^{+}$ as $t\rightarrow-\infty$, proving that $u_{j,d_{j}^{\ast}}^{+}$ is unstable.

Now let $u\left(  \text{\textperiodcentered}\right)  =v_{-1}=u_{j,d_{j}^{\ast
}}^{+}$. Defining the neighborhood $\mathcal{O}_{2r_{0}}(v_{-1})$ as before,
for any $r\leq r_{0}$ we can choose $n_{r}$ such that $u^{\varepsilon_{n}%
}(0)\in\mathcal{O}_{r}(v_{-1})$ for all $n\geq n_{r}$. Also, since
$u^{\varepsilon_{n}}(t)\rightarrow z_{0}^{n}$, as $t\rightarrow+\infty,$ where
$z_{0}^{n}\not =v_{j,d_{j}^{\varepsilon_{n}}}^{+}$ is a fixed point of
(\ref{problemaprox}), there exists $t_{r}>0$ such that
\[
u^{\varepsilon_{n_{r}}}(t)\in\mathcal{O}_{r_{0}}(v_{-1})\text{ for }%
t\in\lbrack0,t_{r}),
\]%
\[
\left\Vert u^{\varepsilon_{n_{r}}}(t_{r})-v_{-1}\right\Vert _{H_{0}^{1}}%
=r_{0}.
\]
The sequence $\{t_{r}\}$ cannot be bounded. Indeed, if $t_{r}\rightarrow
t_{0}$, then $u^{\varepsilon_{n_{r}}}(t_{r})\rightarrow u\left(  t_{0}\right)
=v_{-1}$, which is a contradiction with $\left\Vert u^{\varepsilon_{n_{r}}%
}(t_{0})-v_{-1}\right\Vert _{H_{0}^{1}}=r_{0}$. Then $t_{r}\rightarrow+\infty
$. We define the functions $u_{1}^{\varepsilon_{n_{r}}}(t)=u^{\varepsilon
_{n_{r}}}(t+t_{r})$, which satisfy that $u_{1}^{\varepsilon_{n_{r}}}%
(t)\in\mathcal{O}_{r_{0}}(v_{-1})$ for all $t\in\lbrack-t_{r},0)$. Passing to
a subsequence it converges to a bounded complete trajectory $\phi\left(
\text{\textperiodcentered}\right)  $ such that $\phi\left(  t\right)
\in\mathcal{O}_{r_{0}}(v_{-1})$ for all $t\leq0$. This trajectory is not a
fixed point as $\left\Vert \phi(0)-v_{-1}\right\Vert _{H_{0}^{1}}=r_{0}$ and
$\phi\left(  t\right)  \rightarrow u_{j,d_{j}^{\ast}}^{+}$ as $t\rightarrow
-\infty$, so $u_{j,d_{j}^{\ast}}^{+}$ is unstable.
\end{proof}

\bigskip

Further, we will prove that there is also a connection from $0$ to the point
$u_{k,d_{k}^{\ast}}^{\pm}$.

\begin{theorem}
\label{ConnectionsZero}Assume the conditions of Theorem \ref{Unstable}. Then
there exists a bounded complete trajectory $\phi\left(
\text{\textperiodcentered}\right)  $ such that $\phi\left(  t\right)
\underset{t\rightarrow-\infty}{\rightarrow}0$, $\phi\left(  t\right)
\underset{t\rightarrow+\infty}{\rightarrow}u_{k,d_{k}^{\ast}}^{+}$ (and the
same is valid for $u_{k,d_{k}^{\ast}}^{-}$). Thus, $E(0)=0>E(u_{k,d_{k}^{\ast
}}^{\pm}).$
\end{theorem}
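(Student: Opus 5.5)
Our plan is to follow the same approximation scheme used in the proof of Theorem \ref{Unstable}, now aiming at the forward limit as well. For each $\varepsilon_{n}$, problem (\ref{problemaprox}) generates a single-valued semigroup whose equilibria are $0,v_{1,d_{1}^{\varepsilon_{n}}}^{\pm},\ldots,v_{k,d_{k}^{\varepsilon_{n}}}^{\pm}$. By the results of \cite{carvalestef} (Theorems 3.5--3.6, the Chafee--Infante-type connection results for the smooth odd nonlocal problem), there is a bounded complete trajectory $u^{\varepsilon_{n}}$ with $u^{\varepsilon_{n}}(t)\to0$ as $t\to-\infty$ and $u^{\varepsilon_{n}}(t)\to v_{k,d_{k}^{\varepsilon_{n}}}^{+}$ as $t\to+\infty$. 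We normalize time by choosing the translation so that $\Vert u^{\varepsilon_{n}}(0)\Vert_{H_{0}^{1}}=r_{0}$, where $r_{0}>0$ is so small that $\mathcal{O}_{2r_{0}}(0)$ contains no other equilibrium of (\ref{problem1}). By Lemma \ref{convergfixedpoint} and Lemma \ref{lemaconvergenciaA0}, $v_{k,d_{k}^{\varepsilon_{n}}}^{+}$ stays away from $0$ uniformly in $n$, so such a first exit time exists, and we may further arrange that $u^{\varepsilon_{n}}(t)\in\mathcal{O}_{r_{0}}(0)$ for all $t<0$.

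Next we apply Lemma \ref{remark28}. Up to a subsequence, $u^{\varepsilon_{n}}\to\phi$ in $C([-T,T],H_{0}^{1})$ for every $T$, where $\phi$ is a bounded complete trajectory of (\ref{problem1}) with $\Vert\phi(0)\Vert=r_{0}$ and $\phi(t)\in\overline{\mathcal{O}_{r_{0}}(0)}$ for $t\le0$. Thus $\phi$ is not an equilibrium, and its $\alpha$-limit, which is a single fixed point by Theorem \ref{Structure2}, must be $0$. It remains to identify $\omega(\phi)$, which is some fixed point $z$. For this we use the lap number. For the approximating problems, $u^{\varepsilon_{n}}(t)\to v_{k}^{+}$, which has $k+1$ zeros, and the zero number is nonincreasing along the flow, so $u^{\varepsilon_{n}}(t)$ has at least $k+1$ zeros for every $t$, counting zeros in the sense of Theorem \ref{LapNumber}. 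We want to pass this to $\phi$ and then to $z$.

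The main obstacle is that zero numbers are only lower semicontinuous under $C^{1}$ convergence in the wrong direction. Limits can lose zeros if zeros merge, although Angenent's theory says multiple zeros disappear instantly, so at generic times the zeros are simple. We would argue as follows. Since $\mathcal{A}$ and the approximating attractors are compact in $C^{1}$ by Corollary \ref{relcompact}, $\phi(t)\to z$ in $C^{1}$. Suppose $z=u_{j,d_{j}^{\ast}}^{\pm}$ with $j<k$, or $z=0$; the latter is excluded by the Lyapunov function, since there are no homoclinic orbits (Remark \ref{Homoclinic}). Fix a large $t$ at which $\phi(t)$ is $C^{1}$-close to $z$, which has only $j+1<k+1$ simple zeros. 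Then for large $n$, $u^{\varepsilon_{n}}(t)$ is $C^{1}$-close to $z$ and therefore has exactly $j+1$ zeros, contradicting the fact that it has at least $k+1$. Hence $j=k$, and $z=u_{k,d_{k}^{\ast}}^{+}$ or $u_{k,d_{k}^{\ast}}^{-}$. To rule out $u_{k}^{-}$, we would use the sign structure of the approximants near $x=0$. Since $u^{\varepsilon_{n}}(t)$ has exactly $k+1$ zeros for large $t$, it has the same sign pattern as $v_{k}^{+}$, with a positive first hump and $u_{x}(t,0)>0$, and this is preserved under $C^{1}$ convergence. If that argument fails, we would simply state the conclusion up to the symmetry $x\mapsto1-x$: the reflection of $\phi$ connects $0$ to the other point, which suffices for the claim ``the same is valid for $u^{-}$.''

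Finally, since the Lyapunov function $E$ in (\ref{Lyapunov}) is strictly decreasing along the non-stationary trajectory $\phi$, we get $E(0)=0>E(u_{k,d_{k}^{\ast}}^{\pm})$. Here $E(0)=0$ because $A(0)=0$, $F(0)=0$ and $h=0$, and the two values $E(u_{k}^{\pm})$ are equal by the proof of Lemma \ref{NoConnection1}.
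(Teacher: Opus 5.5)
There is a genuine gap in your proposal: the input your argument rests on is not available. Your passage to the limit is sound, but you assume that for each $\varepsilon_n$ problem (\ref{problemaprox}) already has a bounded complete trajectory going from $0$ to $v^{+}_{k,d_k^{\varepsilon_n}}$, and you cite Theorems 3.5--3.6 of \cite{carvalestef} for it. The paper uses those results (proof of Theorem \ref{Unstable}) only to produce a trajectory \emph{leaving} an equilibrium, i.e.\ instability, with no information on where that trajectory goes. When the paper later needs approximating connections out of $0$ (proof of Theorem \ref{ConnectionFrom0}), it has to build them separately, via the attractor on the positive cone of $H^1_0(0,\frac{1}{j})$. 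Without $u^{\varepsilon_n}(t)\to v^{+}_{k,d_k^{\varepsilon_n}}$ as $t\to+\infty$, your bound ``at least $k+1$ zeros for every $t$'' has no source. For $k\geq 2$ nothing stops a trajectory leaving $0$ from converging to, say, the stable points $v^{\pm}_{1,d_1^{\varepsilon_n}}$, and then your identification of $\omega(\phi)$ fails. In effect you have assumed the theorem itself for the smooth approximating problems.

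The missing idea is the paper's reduction to one hump by symmetry and scaling. Work on $(0,\frac{1}{k})$ with $f_k(u)=\sqrt{k}f(u/\sqrt{k})$. Placing $k$ alternating-sign copies of $w/\sqrt{k}$ on $[0,1]$ maps solutions there to solutions of (\ref{problem1}). The copies should be made by odd reflection across each $j/k$, so that $x$-derivatives match, and oddness of $f$ handles the sign. This extension preserves the $H^1_0$ norm, hence the nonlocal coefficient. Condition (\ref{CondLambda2}) leaves only $0$ and the two one-hump equilibria on $(0,\frac{1}{k})$. With only three equilibria no lap-number argument is needed:
\begin{itemize}
\item instability of $0$ (Theorem \ref{Unstable}) gives a trajectory leaving $0$;
\item Theorem \ref{Structure2} and Remark \ref{Homoclinic} force its $\omega$-limit to be a one-hump equilibrium;
\item $-\phi$ gives the connection to the other one.
\end{itemize}
Your time normalization, the $\alpha$-limit identification, and the $C^1$/lap-number transfer through Corollary \ref{relcompact} are correct as a way of passing a connection to the limit. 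After this reduction, however, they become unnecessary.

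Lastly, use $\phi\mapsto-\phi$ rather than $x\mapsto 1-x$ to reach $u^{-}_{k,d_k^{\ast}}$. For odd $k$ the reflection maps $u^{+}_{k,d_k^{\ast}}$ to itself (already for $k=1$). Your sign-pattern argument would also need the strict drop of the zero number at multiple zeros, which Theorem \ref{LapNumber} does not provide.
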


\begin{proof}
We start with the case where $k=1$. We have three fixed points:
$0,\ u_{1,d_{1}^{\ast}}^{+},u_{1,d_{1}^{\ast}}^{-}$. By Theorem \ref{Unstable}
there exists a bounded complete trajectory $\phi\left(
\text{\textperiodcentered}\right)  $ such that $\phi\left(  t\right)
\underset{t\rightarrow-\infty}{\rightarrow}0$, whereas Theorem \ref{theorem12}
and Remark \ref{Homoclinic}\ imply that it has to converge forward to a fixed
point different from $0$, that is, to either $u_{1,d_{1}^{\ast}}^{+}$ or
$u_{1,d_{1}^{\ast}}^{-}$. If, for example, $\phi\left(  t\right)
\underset{t\rightarrow+\infty}{\rightarrow}u_{1,d_{1}^{\ast}}^{+}$, then as
the function $f$ is odd, $\psi\left(  t\right)  =-\phi\left(  t\right)  $ is
another bounded complete trajectory and $\psi\left(  t\right)
\underset{t\rightarrow+\infty}{\rightarrow}-u_{1,d_{1}^{\ast}}^{+}%
=u_{1,d_{1}^{\ast}}^{-}.$

Further we consider the problem%
\begin{equation}
\left\{
\begin{array}
[c]{l}%
\dfrac{\partial u}{\partial t}-a(\Vert u\Vert_{H_{0}^{1}}^{2})\dfrac
{\partial^{2}u}{\partial x^{2}}=\lambda f_{k}(u),\quad t>0,\ 0<x<\frac{1}%
{k},\\
u(t,0)=u(t,\frac{1}{k})=0,\\
u(0,x)=u_{0}(x),
\end{array}
\right.  \label{Problem_k}%
\end{equation}
where $f_{k}(u)=\sqrt{k}f\left(  u/\sqrt{k}\right)  $ satisfies (A1)-(A5). In
this problem, condition (\ref{CondLambda2}) implies that there are again three
fixed points: $0,\ u_{1,d_{1}^{\ast},\frac{1}{k}}^{+},u_{1,d_{1}^{\ast}%
,\frac{1}{k}}^{-}$. By the above argument there is a connection $\phi
_{\frac{1}{k}}\left(  \text{\textperiodcentered}\right)  $ from $0$ to
$u_{1,d_{1}^{\ast},\frac{1}{k}}^{+}$ (also to $u_{1,d_{1}^{\ast},\frac{1}{k}%
}^{-}$). Since the function $f$ is odd, $u_{k,d_{k}^{\ast}}^{+}\left(
x\right)  $ is equal to $\frac{1}{\sqrt{k}}u_{1,d_{1}^{\ast},\frac{1}{k}}%
^{+}\left(  x\right)  $ on $[0,\frac{1}{k}]$, to $-\frac{1}{\sqrt{k}%
}u_{1,d_{1}^{\ast},\frac{1}{k}}^{+}\left(  x-\frac{1}{k}\right)  $ on
$[\frac{1}{k},\frac{2}{k}]$, etc. Then the function $\phi\left(
\text{\textperiodcentered}\right)  $ such that $\phi\left(  t,x\right)
=\frac{(-1)^{j}}{\sqrt{k}}\phi_{\frac{1}{k}}\left(  t,x-\frac{j}{k}\right)  $
on $[\frac{j}{k},\frac{j+1}{k}]$, $j=0,1,...,k-1$, is a bounded complete
trajectory of problem (\ref{problem1}) which goes from $0$ to $u_{k,d_{k}%
^{\ast}}^{+}$.
\end{proof}

\begin{remark}
When $k=1$ the structure of the global attractor is the same as in the
Chafee-Infante equation.
\end{remark}

\subsection{Gradient structure}

We will obtain that the m-semiflow $G$ is dynamically gradient. Let us recall
this concept.

A weakly invariant set $M$ of $X$ is isolated if there is a neighborhood $O$
of $M$ such that $M$ is the maximal weakly invariant subset on $\mathcal{O}$.
If $M$ belongs to the global attractor $\mathcal{A}$, then it is compact
\cite[Lemma 19]{CostaValero16}. In this case, it is equivalent to use a
$\delta$-neighborhood $\mathcal{O}_{\delta}(M)=\{y\in X:dist\left(
y,M\right)  <\delta\}.$

Suppose that there is a finite disjoint family of isolated weakly invariant
sets $\mathcal{M}=\{M_{1},\ldots,M_{m}\}$ in $\mathcal{A}$, that is, for every
$j\in\{1,\ldots,n\}$ there is $\epsilon_{j}>0$ such that $M_{j}\subset
\mathcal{A}$ is the maximal weakly invariant set on $\mathcal{O}_{\epsilon
_{j}}(M_{j})$, and suppose that there exists $\delta>0$ such that
$\mathcal{O}_{\delta}(M_{i})\cap\mathcal{O}_{\delta}(M_{j})=\emptyset$, if
$i\neq j$.

\begin{definition}
We say the m-semiflow $G\colon\mathbb{R}^{+}\times X\rightarrow P(X)$ is
dynamically gradient with respect to the disjoint family of isolated weakly
invariant sets $\mathcal{M}=\{M_{1},\ldots,M_{m}\}$ if for every complete and
bounded trajectory $\psi$ of $\mathcal{R}$ we have that either $\psi
(\mathbb{R})\subset M_{j}$, for some $j\in\{1,\ldots,m\}$, or $\alpha
(\psi)\subset M_{i}$ and $\omega(\psi)\subset M_{j}$ with $1\leq j<i\leq m$.
\end{definition}

Let us consider the case when the conditions of Theorem \ref{Unstable} hold.
Then (\ref{problem1})\ possesses exactly $2k+1$ fixed points:\ $v_{0}%
=0,\ u_{1,d_{1}^{\ast}}^{\pm},...,u_{k,d_{k}^{\ast}}^{\pm}$. Also, as $f$ is
odd, $u_{j,d_{j}^{\ast}}^{+}=-u_{j,d_{j}^{\ast}}^{-}$ for any $j$. We define
the following sets:%
\begin{equation}
M_{1}=\{u_{1,d_{1}^{\ast}}^{+},u_{1,d_{1}^{\ast}}^{-}\},...,\ M_{k}%
=\{u_{k,d_{k}^{\ast}}^{+},u_{k,d_{k}^{\ast}}^{-}\},\ M_{k+1}%
=\{0\}.\label{Family}%
\end{equation}
They are weakly invariant and using Lemma \ref{NoConnection1} we deduce easily
that they are isolated. Then the family $\mathcal{M}=\{M_{1},\ldots,M_{k+1}\}$
is a finite disjoint family of isolated weakly invariant sets.

\begin{proposition}
\label{DynamGradProp}Assume the conditions of Theorem \ref{Unstable}. Then $G$
is dynamically gradient with respect to the family (\ref{Family}) after
(possibly) reordering them.
\end{proposition}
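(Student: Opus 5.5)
The plan is to order the family by the number of zeros in reverse: relabel so that $M_{k+1}=\{0\}$ gets the highest index and $M_j=\{u_{j,d_j^{\ast}}^{+},u_{j,d_j^{\ast}}^{-}\}$ keeps index $j$. Equilibria with more zeros then carry larger indices. Since $0$ has infinitely many zeros in the lap-number sense, we treat it separately. With this labelling the definition requires the following: for every bounded complete trajectory $\psi$, either $\psi$ is an equilibrium, or $\alpha(\psi)\subset M_i$ and $\omega(\psi)\subset M_j$ with $j<i$.

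Let $\psi\in\mathbb{K}$. By Theorem \ref{Structure2} and the discussion preceding it (finiteness of $\mathfrak{R}$ together with connectedness of the limit sets), we have $\alpha(\psi)=\{z_2\}$ and $\omega(\psi)=\{z_1\}$ for equilibria $z_1,z_2$. If $\psi$ is not constant, the Lyapunov function (\ref{Lyapunov}) is strictly decreasing along $\psi$, so $E(z_2)>E(z_1)$ and in particular $z_1\neq z_2$. This excludes homoclinic orbits (Remark \ref{Homoclinic}). Lemma \ref{NoConnection1} further excludes connections between $u_{j,d_j^{\ast}}^{+}$ and $u_{j,d_j^{\ast}}^{-}$. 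Hence $z_1$ and $z_2$ lie in different sets $M_i\neq M_j$.

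It remains to show that the index decreases. There are three cases.
\begin{enumerate}
\item[(i)] $z_2=u_{m,d_m^{\ast}}^{\pm}$ and $z_1=u_{n,d_n^{\ast}}^{\pm}$ with $m\neq n$. Lemma \ref{NoConnection2} forbids $n>m$, so $n<m$ and the index decreases.
\item[(ii)] $z_1=0$, that is, a connection into $0$. We must exclude this. Since $E$ decreases along $\psi$, it suffices to show that $E(0)=0>E(u_{m,d_m^{\ast}}^{\pm})$ for every $1\le m\le k$. Theorem \ref{ConnectionsZero} gives this inequality for $m=k$. For general $m$, we use the same rescaled construction on $(0,1/m)$. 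Under (\ref{CondLambda2}) with $\lambda>a(0)\pi^2m^2$, the problem on the subinterval has a nontrivial equilibrium. By Theorem \ref{Unstable} and the $k=1$ argument of Theorem \ref{ConnectionsZero}, there is a connection from $0$ to it. Gluing by oddness yields a connection from $0$ to $u_{m,d_m^{\ast}}^{\pm}$, hence $E(u_{m,d_m^{\ast}}^{\pm})<0$. A connection ending at $0$ would therefore require $E(z_2)>0$, which is impossible.
\item[(iii)] $z_2=0$. Then $\alpha(\psi)\subset M_{k+1}$, which has the largest index, so the ordering holds automatically.
\end{enumerate}

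The main obstacle is case (ii) for $m<k$. The decisive step is adapting the construction of Theorem \ref{ConnectionsZero} to every $m$, and I need to check that the rescaled problem (\ref{Problem_k}) on $(0,1/m)$ satisfies the hypotheses used there. If the rescaled problem on $(0,1/m)$ has more than three equilibria, I would instead argue directly that $E(u_{m,d_m^{\ast}}^{\pm})<0$. To do so, I would take the chain of connections starting at $0$ provided by Theorem \ref{Unstable}, which must terminate at some nonzero equilibrium, and then use the lap-number monotonicity along the chain. If that also fails, an alternative is to compute the sign of $E$ at the equilibria explicitly using the concavity assumption (A4) and the energy identity (\ref{Level}).
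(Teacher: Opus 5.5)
Your reduction is fine up to case (ii), but case (ii) carries the whole weight of the ordering you chose, and the argument you give for it fails in general. The $k=1$ argument of Theorem \ref{ConnectionsZero} needs the problem on $(0,1/k)$ to have exactly three equilibria. That holds because $\lambda\le a(0)\pi^2(k+1)^2\le 4a(0)\pi^2k^2$.

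On $(0,1/m)$, however, the second mode exists as soon as $\lambda>4a(0)\pi^2m^2$. This is automatic whenever $2m\le k$; for $m=1$, $k\ge 2$ the rescaled problem is just the original one. There are then at least five equilibria. The connection leaving $0$ may end at a higher mode $n\ge2$, which glues to $u_{mn,d_{mn}^{\ast}}^{\pm}$ rather than $u_{m,d_m^{\ast}}^{\pm}$, so you do not obtain $E(u_{m,d_m^{\ast}}^{\pm})<0$.

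You also cannot import the connection from $0$ to $u_{m,d_m^{\ast}}^{\pm}$ from Theorem \ref{ConnectionFrom0}. That theorem relies on Theorem \ref{Stability}, which in turn relies on the proposition you are proving. Your fallbacks are only listed, not carried out, so as written step (ii) is unproved.

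The repair is short, and your third alternative is the right one. Take an equilibrium $u\not\equiv 0$ with $d=\|u\|_{H_0^1}^2$. Testing (\ref{elliptic}) with $u$ gives $a(d)d=\lambda\int_0^1 f(u)u\,dx$. Conditions (A2) and (A4) give $F(s)>\frac12 sf(s)$ for $s\neq 0$, and (A8) gives $A(d)\le a(d)d$. Hence $E(u)<\frac12\bigl(A(d)-a(d)d\bigr)\le 0=E(0)$.

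Your chain idea also works if you run it in the problem on $(0,1/m)$. Theorem \ref{Unstable} gives an exit from every mode $j\ge 2$. Energy together with Lemmas \ref{NoConnection1} and \ref{NoConnection2} forces the mode index to drop. So the chain starting at $0$ ends at the first mode, whose energy equals $E(u_{m,d_m^{\ast}}^{\pm})$.

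Once repaired, your argument proves more than is asked: it gives the ordering by number of zeros, that is, the paper's final theorem, directly and without Theorems \ref{Stability} and \ref{ConnectionFrom0}. For the proposition as stated, though, none of this is needed. The paper orders the sets by their energy levels and invokes Theorem 25 of \cite{CostaValero16}; these levels are constant on each $M_j$ because $E$ is even and $u^-=-u^+$. Your first paragraph already gives the dynamically gradient property for that ordering with no case analysis, since it establishes single-point limit sets and strict decrease of $E$ along non-constant trajectories.
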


\begin{proof}
We reorder the family (\ref{Family}) in such a way that if the value of the
Lyapunov function $E$ given in (\ref{Lyapunov}) is equal to $L_{i}$ for the
set $\widetilde{M}_{i}$, then $L_{j}\leq L_{n}$ for $j<n.$ Then Theorem 25 in
\cite{CostaValero16} implies that $G$ is dynamically gradient with respect to
this family.
\end{proof}

\bigskip

We will obtain then that the fixed points $u_{1,d_{1}^{\ast}}^{+}%
,u_{1,d_{1}^{\ast}}^{-}$ are asymptotically stable. The compact set
$M\subset\mathcal{A}$ is a local attractor for $G$ in $X$ if there is
$\varepsilon>0$ such that $\omega\left(  O_{\varepsilon}(M)\right)  =M,$
where
\[
\omega\left(  B\right)  =\{y:\exists t_{n}\rightarrow+\infty,\ y_{n}\in
G(t_{n},B)\text{ such that }y_{n}\rightarrow y\}
\]
is the $\omega$-limit set of $B$. By Lemma 14 in \cite{CostaValero16} if $M$
is a local attractor in $X$, then it is stable. Thus, a local attractor is
asymptotically stable.

\begin{theorem}
\label{Stability}Assume the conditions of Theorem \ref{Unstable}. Then the
stationary points $u_{1,d_{1}^{\ast}}^{+},u_{1,d_{1}^{\ast}}^{-}$ are
asymptotically stable.
\end{theorem}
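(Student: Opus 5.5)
The plan is to show that $M=\{u_{1,d_{1}^{\ast}}^{+}\}$ (and, by oddness of $f$, $\{u_{1,d_{1}^{\ast}}^{-}\}$) is a local attractor for $G$ in $X=H_{0}^{1}(\Omega)$. Lemma 14 in \cite{CostaValero16} then gives stability, and hence asymptotic stability. The key structural fact is that the unstable set of $u_{1,d_{1}^{\ast}}^{+}$ is trivial: no bounded complete trajectory $\phi$, other than the stationary one, satisfies $\phi(t)\rightarrow u_{1,d_{1}^{\ast}}^{+}$ as $t\rightarrow-\infty$.

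\textbf{Step 1: exclude every possible target.} Suppose such a $\phi$ exists. By Theorem \ref{Structure2} it converges forward to some fixed point $z\neq u_{1,d_{1}^{\ast}}^{+}$ (Remark \ref{Homoclinic} excludes homoclinics).
\begin{itemize}
\item If $z=u_{j,d_{j}^{\ast}}^{\pm}$ with $j\geq2$, then $z$ has $j+1>2$ zeros, while $u_{1,d_{1}^{\ast}}^{+}$ has only the two boundary zeros. This contradicts Lemma \ref{NoConnection2}, i.e.\ the lap-number monotonicity.
\item If $z=u_{1,d_{1}^{\ast}}^{-}$, Lemma \ref{NoConnection1} forbids the connection, since both points share the same energy.
\item If $z=0$, Theorem \ref{ConnectionsZero} gives $E(0)>E(u_{1,d_{1}^{\ast}}^{+})$. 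This is incompatible with the strict decrease of $E$ along non-stationary trajectories.
\end{itemize}
Hence no such $\phi$ exists.

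\textbf{Step 2: small neighborhoods stay close.} Choose $r_{0}>0$ such that $\mathcal{O}_{2r_{0}}(u_{1,d_{1}^{\ast}}^{+})$ contains no other fixed point. I claim there is $r\in(0,r_{0})$ such that every solution starting in $\mathcal{O}_{r}(u_{1,d_{1}^{\ast}}^{+})$ stays in $\mathcal{O}_{r_{0}}(u_{1,d_{1}^{\ast}}^{+})$ for all $t\geq0$.

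Argue by contradiction, copying the last part of the proof of Theorem \ref{Unstable}. Take $y_{n}\rightarrow u_{1,d_{1}^{\ast}}^{+}$, solutions $u^{n}\in K(y_{n})$ and exit times $t_{n}$ with $\Vert u^{n}(t_{n})-u_{1,d_{1}^{\ast}}^{+}\Vert_{H_{0}^{1}}=r_{0}$, and with $u^{n}$ inside the ball on $[0,t_{n})$.
\begin{itemize}
\item Lemma \ref{lemma4}, applied with the stationary solution as limit, forces $t_{n}\rightarrow+\infty$.
\item Since $G$ is strict, the solutions $u^{n}$ can be taken to lie in $\mathcal{A}$ after time $t_{0}$, or one restricts to initial data in the absorbing set. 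Then the shifted trajectories $u^{n}(\cdot+t_{n})$, defined on $[-t_{n},\infty)$, are precompact via Lemma \ref{lemma4} and a diagonal argument. They converge to a bounded complete trajectory $\psi$ with $\psi(t)\in\overline{\mathcal{O}_{r_{0}}}$ for $t\leq0$ and $\Vert\psi(0)-u_{1,d_{1}^{\ast}}^{+}\Vert=r_{0}$.
\item By Theorem \ref{Structure2}, $\alpha(\psi)$ is a single fixed point in the closed ball, hence equal to $u_{1,d_{1}^{\ast}}^{+}$, while $\psi$ is not stationary.
\end{itemize}
This contradicts Step 1.

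The main technical obstacle is this compactness step for trajectories that need not lie in $\mathcal{A}$. I expect to handle it with the uniform $H_{0}^{1}$ bounds on bounded time intervals from the proof of Theorem \ref{existence}, together with the compactness of the absorbing set $B_{1}$ from Proposition \ref{absorbingset}. The $\omega$-limit passage along shifted solutions then works exactly as in Lemma \ref{lemma4}.

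\textbf{Step 3: conclude.} Take $\varepsilon=r$ from Step 2. Every $y\in\omega(\mathcal{O}_{\varepsilon}(u_{1,d_{1}^{\ast}}^{+}))$ lies in $\overline{\mathcal{O}_{r_{0}}}$ and belongs to a bounded complete trajectory staying in that ball; this follows from the standard characterization of $\omega$-limits of sets in the multivalued setting together with Lemma \ref{lemma4}. By the Lyapunov function argument of Theorem \ref{theorem12}, such a trajectory tends forward and backward to fixed points in the ball, hence to $u_{1,d_{1}^{\ast}}^{+}$ at both ends. Strict monotonicity of $E$ then forces the trajectory to be constant. Therefore $\omega(\mathcal{O}_{\varepsilon})=\{u_{1,d_{1}^{\ast}}^{+}\}$, which is the local attractor property.

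Alternatively, once Step 1 is established, one may invoke directly the result in \cite{CostaValero16} that, for a dynamically gradient m-semiflow (Proposition \ref{DynamGradProp}), a Morse set with trivial unstable set is a local attractor. The result for $u_{1,d_{1}^{\ast}}^{-}$ follows by the symmetry $u\mapsto-u$.
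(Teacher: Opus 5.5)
Your Step 1 has a genuine gap in the case $z=0$. Theorem \ref{ConnectionsZero} does not give $E(0)>E(u_{1,d_{1}^{\ast}}^{+})$. The connection it builds runs from $0$ to $u_{k,d_{k}^{\ast}}^{\pm}$, where $k$ is the \emph{largest} index allowed by (\ref{CondLambda2}). So it says something about $u_{1,d_{1}^{\ast}}^{\pm}$ only when $k=1$. For $k\geq2$, the connection $0\to u_{1,d_{1}^{\ast}}^{\pm}$ is Theorem \ref{ConnectionFrom0}. Its proof, already for $j=1$, invokes the stability of $u_{1,d_{1}^{\ast}}^{+}$, which is the statement you are proving, so citing it would be circular. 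Lemma \ref{NoConnection2} does not help either: it only concerns connections between nontrivial equilibria with simple zeros, and $0$ is not one. As written, nothing excludes a connection $u_{1,d_{1}^{\ast}}^{+}\to0$ when $k\geq2$. Steps 2 and 3, and your alternative via dynamical gradience, all rest on Step 1.

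The hole can be filled directly, in a way that makes Theorem \ref{Unstable} unnecessary. Let $v\neq0$ be an equilibrium and $d=\Vert v\Vert_{H_{0}^{1}}^{2}$. Testing (\ref{elliptic}) with $v$ gives $a(d)d=\lambda\int_{0}^{1}f(v)v\,dx$. By (A2) and (A4), $2F(s)>sf(s)$ for $s\neq0$, and by (A8), $A(d)\leq a(d)d$. Hence $E(v)<\frac{1}{2}a(d)d-\frac{1}{2}a(d)d=0=E(0)$. Alternatively, you can descend from Theorem \ref{ConnectionsZero} through $u_{k,d_{k}^{\ast}}^{\pm}$ down to $u_{1,d_{1}^{\ast}}^{\pm}$, using Theorem \ref{Unstable} and Lemmas \ref{NoConnection1}--\ref{NoConnection2}.

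One more point in Step 2: Lemma \ref{lemma4} only yields convergence to \emph{some} element of $K(u_{1,d_{1}^{\ast}}^{+})$, not necessarily the stationary one. It is Step 1 that forces it to be stationary. A non-stationary element, concatenated via (K3) with the constant solution on $(-\infty,0]$, would be a forbidden connection.

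Once repaired, your route is a valid alternative to the paper's, which bypasses Step 1 altogether. The paper orders the Morse sets by energy (Proposition \ref{DynamGradProp}) and takes from Costa--Valero (Theorem 23 and Lemma 15) that the lowest set $\widetilde{M}_{1}$ is a local attractor. It then uses Theorem \ref{Unstable}, the instability of $\{0\}$ and of $M_{j}$ for $j\geq2$, to conclude $\widetilde{M}_{1}=M_{1}$. There $E(u_{1,d_{1}^{\ast}}^{\pm})<E(0)$ is an output rather than an input. Your version is more hands-on and, with the energy inequality above, independent of the approximation machinery.
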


\begin{proof}
By \cite[Theorem 23 and Lemma 15]{CostaValero16} $\widetilde{M}_{1}$ is a
local attractor in $X$, so it is asymptotically stable. By Theorem
\ref{Unstable} the sets $M_{j},$ $j\geq2$, are unstable. Thus, $\widetilde{M}%
_{1}=M_{1}$. As $M_{1}$ consists of the two elements $u_{1,d_{1}^{\ast}}%
^{+},u_{1,d_{1}^{\ast}}^{-}$, which are obviously disjoint, they are
asymptotically stable as well.
\end{proof}

\bigskip

We will prove that there is a connection from $0$ to any other fixed point
$u_{j,d_{j}^{\ast}}^{\pm}.$

\begin{theorem}
\label{ConnectionFrom0}Assume the conditions of Theorem \ref{Unstable}. Then
there exists a bounded complete trajectory $\phi\left(
\text{\textperiodcentered}\right)  $ such that $\phi\left(  t\right)
\underset{t\rightarrow-\infty}{\rightarrow}0$, $\phi\left(  t\right)
\underset{t\rightarrow+\infty}{\rightarrow}u_{j,d_{j}^{\ast}}^{+}$ for all
$1\leq j\leq k$ (and the same is valid for $u_{j,d_{j}^{\ast}}^{-}$).
\end{theorem}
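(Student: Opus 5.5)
We reduce the statement to Theorem \ref{ConnectionsZero} applied to an auxiliary problem with a smaller value of the parameter, exactly as was done there with the rescaled problem (\ref{Problem_k}). Fix $1\leq j\leq k$. The case $j=k$ is Theorem \ref{ConnectionsZero} itself, so assume $j<k$. The difficulty is that, for the original problem, $u_{j,d_j^*}^{+}$ is not the top equilibrium, so Theorem \ref{ConnectionsZero} does not apply directly. Instead we look for a restricted problem in which $u_{j,d_j^*}^{+}$ becomes the top equilibrium.

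First, we construct a connection for a one-bump problem. We consider problem (\ref{Problem_k}) with $k$ replaced by $j$: the domain is $(0,\frac1j)$ and the nonlinearity is $f_j(u)=\sqrt{j}f(u/\sqrt{j})$. Any solution $w$ on $(0,\frac1j)$ extends to $(0,1)$ by odd reflections, $\phi(t,x)=\frac{(-1)^i}{\sqrt{j}}w(t,x-\frac{i}{j})$ on $[\frac ij,\frac{i+1}j]$. This extension is a solution of (\ref{problem1}) because $f$ is odd and because the factor $\sqrt j$ makes the nonlocal term $\Vert\phi\Vert_{H_0^1}^2$ coincide with the corresponding norm of $w$. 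This is the same bookkeeping that is implicit in the proof of Theorem \ref{ConnectionsZero}, and we verify it by the change of variables $y=x-i/j$: the $j$ copies each contribute $\frac1j\Vert w'\Vert^2$, and the scaling is fixed so that this matches the definition of $\Vert\cdot\Vert$ used in (\ref{Problem_k}).

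Next, we check that the rescaled problem on $(0,\frac1j)$ has exactly three equilibria, $0$ and $u^{\pm}_{1,d_1^*,\frac1j}$. This holds because its first Dirichlet eigenvalue is $j^2\pi^2$, the second is $4j^2\pi^2$, and we need $a(0)j^2\pi^2<\lambda\leq a(0)4j^2\pi^2$. The last inequality fails if $k\ge 2j$. Therefore we do not impose it. Instead we apply the conclusion of Theorem \ref{ConnectionsZero} for "$k=1$" only in the following weaker form. By Theorem \ref{Unstable}, applied to the problem on $(0,\frac1j)$, the equilibrium $0$ is unstable, so there is a complete bounded trajectory emanating from $0$. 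The key point is that we must steer it to the one-bump equilibrium.

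To do this, we restrict to the invariant subspace of functions on $(0,\frac1j)$ that are symmetric about $x=\frac1{2j}$. This subspace is preserved by the equation because of uniqueness-free symmetrization: the reflected solution is again a solution, and in the construction via Theorem \ref{Unstable} we may work with the approximating single-valued semiflows $f_{\varepsilon_n}$, for which symmetric data give symmetric solutions. In this subspace, the equilibria are $0$ and $u^{\pm}_{1}$ together with the symmetric higher modes. We then use the lap number (Theorem \ref{LapNumber}). For the approximations, the unstable manifold of $0$ tangent to the first eigenfunction consists of functions with no interior zeros, and we use \cite{carvalestef} for the existence of the connection from $0$ to $v_{1}^{+}$ in the approximating problem. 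We then pass to the limit $\varepsilon_n\to0$ using Lemma \ref{remark28} and Lemma \ref{convergfixedpoint}. The limit trajectory $\phi$ has at most $2$ zeros for all $t$ by lower semicontinuity of zero counts under $C^1$ convergence. Its $\omega$-limit is therefore $u_1^{+}$ (not $0$, by Remark \ref{Homoclinic}, provided the $\alpha$-limit is $0$).

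The main obstacle is guaranteeing that the limit trajectory still starts at $0$ rather than at a shifted equilibrium. We handle this exactly as in the proof of Theorem \ref{Unstable}, by a time translation at the first exit from a small ball around $0$ and a chain argument. Since the zero count bounds exclude all equilibria except $0,\pm u_1$, the chain terminates at $0$. Reflecting as above then yields the connection from $0$ to $u^{+}_{j,d_j^*}$, and the connection to $u^{-}_{j,d_j^*}$ is obtained via $-\phi$.
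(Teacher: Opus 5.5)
Your proposal is correct in substance and has the same skeleton as the paper's proof:
\begin{itemize}
\item pass to the one-bump problem (\ref{Problem_k}) on $(0,\frac1j)$ with $f_j$;
\item obtain a \emph{positive} connection $0\to v^+_1$ for the smooth single-valued approximations;
\item pass to the limit with Lemma \ref{remark28};
\item glue $j$ alternating copies.
\end{itemize}
You also correctly identify the real obstacle: for $k\ge 2j$ the problem on $(0,\frac1j)$ has more than three equilibria. As in the paper, it is positivity that removes them.

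The difference is in how the positive approximating connection is produced. The paper restricts the approximating semigroup to the cone $X_j^+$, where $0$ and $v^+_1$ are the only equilibria. The attractor of this restriction is connected and consists of equilibria and their connections, and $v^+_1$ is stable, so a connection $0\to v^+_1$ must exist. You instead take the branch of the strong unstable manifold of $0$ tangent to the first eigenfunction. This avoids the cone attractor but needs invariant-manifold theory.

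The positivity of that branch carries the whole proof, so it should be argued rather than cited:
\begin{itemize}
\item show that $\phi(t)/\Vert\phi(t)\Vert$ tends to the first eigenfunction in $C^1$ as $t\to-\infty$, so $\phi(t)>0$ in the interior for very negative $t$;
\item propagate positivity forward by the maximum principle applied to the linear form (\ref{Linear});
\item use the Lyapunov function to conclude that the $\omega$-limit is a nonnegative equilibrium different from $0$, i.e.\ $v^+_1$.
\end{itemize}

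Your normalization at the first exit from a small ball around $0$ is an improvement. It guarantees that the limit trajectory is not an equilibrium and that its $\alpha$-limit is $0$. The paper's appeal to the stability of $u^+_1$ leaves this implicit: stability excludes a connection $u^+_1\to 0$, but not a trivial limit.

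Two things should be fixed or dropped.

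First, the number of zeros is \emph{not} lower semicontinuous under $C^1$ convergence, because a limit can acquire touching zeros. For example, $\sin(\pi x)\big((x-\frac12)^2+\frac1n\big)$ has two zeros in $[0,1]$, while its limit has three. What you actually need is that nonnegativity survives the uniform limit. Then the $\alpha$- and $\omega$-limits of the limit trajectory are nonnegative equilibria of the problem on $(0,\frac1j)$, hence $0$ or $u^+_1$. This also excludes $u^-_1$ directly.

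Second, the symmetric subspace, the lap number and the chain argument play no role. The symmetric subspace still contains the three-bump (and higher odd) equilibria when $k\ge 3j$. With the first-exit normalization, no chain is needed.
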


\begin{proof}
Let us consider problem (\ref{Problem_k}) with $k=j$. The function
$u_{1,d_{j}^{\ast},\frac{1}{j}}^{+}(x)=\sqrt{j}u_{j,d_{j}^{\ast}}^{+}(x)$,
$x\in\lbrack0,\frac{1}{j}]$, is the unique positive fixed point of problem
(\ref{Problem_k}). Let $X_{j}^{+}=\{u\in H_{0}^{1}\left(  0,\frac{1}%
{j}\right)  :u\left(  x\right)  \geq0$ $\forall x\in\lbrack0,\frac{1}{j}]\}$
be the positive cone of $H_{0}^{1}\left(  0,\frac{1}{j}\right)  $. If we
consider the restriction of the semigroup $T_{j}^{\varepsilon_{n}}\left(
\text{\textperiodcentered}\right)  $ of problem (\ref{problemaprox}) in the
interval $\left(  0,\frac{1}{j}\right)  $ to $X_{j}^{+}$, denoted by
$T_{j}^{\varepsilon_{n},+}\left(  \text{\textperiodcentered}\right)  $, then
there exists a global attractor $\mathcal{A}_{n,j}^{+}$ \cite{CarLiLuMo}.
Since $0$ and $v_{1,d_{j}^{\varepsilon_{n}},\frac{1}{j}}^{+}=\sqrt
{j}v_{j,d_{j}^{\varepsilon_{n}}}^{+}\mid_{\lbrack0,\frac{1}{j}]}$ are the
unique fixed points of $T_{j}^{\varepsilon_{n},+}$, $\mathcal{A}_{n,j}^{+}$ is
connected, $v_{1,d_{1}^{\varepsilon_{n}},\frac{1}{j}}^{+}$ is stable
\cite{carvalestef} and $\mathcal{A}_{n,j}^{+}$ consists of the fixed points
and their heteroclinic connections, there must exist a bounded complete
trajectory $\phi_{j}^{\varepsilon_{n}}\left(  \text{\textperiodcentered
}\right)  $ of $T_{j}^{\varepsilon_{n},+}$ which goes from $0$ to
$v_{1,d_{j}^{\varepsilon_{n}},\frac{1}{j}}^{+}$. By Lemma \ref{remark28} up to
a subsequence it converges to a bounded complete trajectory $\phi_{j}\left(
\text{\textperiodcentered}\right)  $ of problem (\ref{Problem_k}) with $k=j$
such that $\phi_{j}\left(  t\right)  \geq0$ for all $t\in\mathbb{R}$. Since by
Theorem \ref{Stability}\ the fixed point $u_{1,d_{j}^{\ast},\frac{1}{j}}^{+}$
is stable, the only possibility is that $\phi_{j}\left(  t\right)
\rightarrow0$, as $t\rightarrow-\infty$, $\phi_{j}\left(  t\right)
\rightarrow u_{1,d_{j}^{\ast},\frac{1}{j}}^{+}$, as $t\rightarrow+\infty$.
Then the function $\phi\left(  \text{\textperiodcentered}\right)  $ such that
$\phi\left(  t,x\right)  =\frac{\left(  -1\right)  ^{i}}{\sqrt{j}}\phi
_{j}\left(  t,x-\frac{i}{j}\right)  $ on $[\frac{i}{j},\frac{i+1}{j}]$,
$i=0,1,...,j-1$, is a bounded complete trajectory of problem (\ref{problem1})
which goes from $0$ to $u_{j,d_{j}^{\ast}}^{+}$.

For $u_{j,d_{j}^{\ast}}^{-}$, noting that $u_{j,d_{j}^{\ast}}^{-}%
=-u_{j,d_{j}^{\ast}}^{+}$, the result follows by choosing the bounded complete
trajectory $\widetilde{\phi}(t)=-\phi\left(  t\right)  $.
\end{proof}

\bigskip

As a consequence we obtain that the order of the family $\mathcal{M}$ has to
be the one given in (\ref{Family}).

\begin{theorem}
The semiflow $G$ is dynamically gradient with respect to the family
$\mathcal{M}$ in the order given in (\ref{Family}), that is, $\widetilde{M}%
_{i}=M_{i}$ for any $i$.
\end{theorem}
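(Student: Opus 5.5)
The plan is to show that the ordering by energy used in Proposition \ref{DynamGradProp} coincides with the ordering (\ref{Family}). Concretely, we aim for the strict chain
\[
E(u_{1,d_{1}^{\ast}}^{\pm})<E(u_{2,d_{2}^{\ast}}^{\pm})<\cdots<E(u_{k,d_{k}^{\ast}}^{\pm})<E(0)=0 .
\]
Once this holds, the reordered family $\widetilde{M}_{i}$ of Proposition \ref{DynamGradProp} is exactly $M_{i}$. Note that $E(u_{j}^{+})=E(u_{j}^{-})$ by oddness of $f$ (or by Lemma \ref{NoConnection1}), so each $M_{j}$ has a single energy level $L_{j}$. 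Theorem 25 in \cite{CostaValero16} then gives dynamical gradience with respect to $\mathcal{M}$ in the order (\ref{Family}).

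The main tool is that $E$ is strictly decreasing along any nonconstant bounded complete trajectory (energy equality in Theorem \ref{theorem12}, Remark \ref{Homoclinic}). So it suffices to produce, for each $2\le j\le k$, a heteroclinic connection from $M_{j}$ to $M_{j-1}$. Theorem \ref{ConnectionFrom0} already gives $E(0)>E(u_{j,d_j^*}^{\pm})$ for every $j$, which places $M_{k+1}=\{0\}$ on top.

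For the connection $M_{j}\to M_{j-1}$, I would first try a direct argument. By Theorem \ref{Unstable}, $u_{j,d_j^*}^{+}$ ($j\ge2$) is unstable, so there is a nonconstant bounded complete trajectory $\phi$ with $\phi(t)\to u_{j,d_j^*}^{+}$ as $t\to-\infty$. By Theorem \ref{Structure2}, $\phi(t)\to z$ as $t\to+\infty$ for some equilibrium $z$. Lemma \ref{NoConnection2} forbids $z$ from having more zeros than $u_{j}^{+}$. Remark \ref{Homoclinic} and Lemma \ref{NoConnection1} rule out $z=u_j^{\pm}$. Finally $z\neq0$, since $E(0)>E(u_j^{+})$ while $E$ decreases along $\phi$. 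Hence $z=u_{i,d_i^*}^{\pm}$ with $i<j$, which gives $L_{j}>L_{i}$. This only yields $L_j>\min_{i<j}L_i$, though, and not yet $L_j>L_{j-1}$.

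The main obstacle is obtaining the full strict order, and I would handle it by induction on $j$ together with a scaling argument. Instead of using a generic unstable trajectory, I would show $E(u_{j})$ is increasing in $j$ directly. By oddness, $u_{j,d_j^*}^{+}$ is built from $j$ rescaled copies of the positive equilibrium $u_{1,d_j^*,\frac1j}^{+}$ of (\ref{Problem_k}). If the dynamical route stalls, the fallback is an explicit computation. Using the relation (\ref{Level}) and the monotonicity in Lemma \ref{Increasing}, one can write $E(u_{j})=\frac12 A(d_j^*)-\lambda\int F(u_j)$ and compare consecutive $j$. Here $d_j^*$ is non-increasing in $j$, because $g$ is non-increasing in $d$ and the $j$-mode norm is smaller for fixed $d$. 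One can also use the characterization $-a(d)u''=\lambda f(u)$ to rewrite $\lambda\int F(u_j)$ via the first integral. This direct comparison is the step I expect to be most delicate, and I would try it only if the connection/induction argument above fails to exclude $L_j<L_{j-1}$.
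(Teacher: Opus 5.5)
\textbf{There is a genuine gap.} Your argument rests entirely on the strict chain $E(u_{1,d_{1}^{\ast}}^{\pm})<\cdots<E(u_{k,d_{k}^{\ast}}^{\pm})<E(0)$, and you never establish it.
\begin{itemize}
\item As you note yourself, the connection argument only gives $L_{j}>\min_{i<j}L_{i}$.
\item The ``induction together with a scaling argument'' is never actually specified.
\item The fallback relies on the claim that, for fixed $d$, the $j$-mode equilibrium has smaller $H_{0}^{1}$-norm than the $(j-1)$-mode. This is false in general. For $f(u)=u-u^{3}$ and $\lambda/a(d)\gg j^{2}\pi^{2}$, the $j$-mode consists of roughly $j$ transition layers of comparable cost, so its squared norm grows roughly linearly in $j$.
\end{itemize}
So neither the monotonicity of $d_{j}^{\ast}$ in $j$ nor a comparison of consecutive energies is available. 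Nothing in the paper gives a connection from $M_{j}$ to $M_{j-1}$ for every $j$.

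The missing observation is that you do not need the energy chain at all. The paper checks the definition of dynamical gradience directly in the order (\ref{Family}). Let $\psi$ be a nonconstant bounded complete trajectory.
\begin{itemize}
\item By Theorem \ref{Structure2} and the argument preceding it, $\alpha(\psi)=\{z_{2}\}$ and $\omega(\psi)=\{z_{1}\}$ for equilibria $z_{1},z_{2}$. These lie in different members of $\mathcal{M}$ by Remark \ref{Homoclinic} and Lemma \ref{NoConnection1}.
\item The case $z_{1}=0$ is excluded, because $E(0)>E(u_{j,d_{j}^{\ast}}^{\pm})$ (Theorem \ref{ConnectionFrom0}) and $E$ strictly decreases along $\psi$.
\item If $z_{2}=0$, then trivially $z_{1}\in M_{j}$ with $j\leq k<k+1$.
\item If $z_{2}\in M_{i}$ and $z_{1}\in M_{j}$ with $i,j\leq k$, Lemma \ref{NoConnection2} forces $j<i$.
\end{itemize}
That is exactly the definition.

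Your second paragraph already carries out this case analysis for trajectories emanating from $u_{j,d_{j}^{\ast}}^{+}$. Applied to an arbitrary nonconstant bounded complete trajectory, it finishes the proof, and the step you single out as the main obstacle can be dropped.

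Note what this argument shows: (\ref{Family}) is an admissible gradient ordering. That is how the paper's proof reads $\widetilde{M}_{i}=M_{i}$; for $M_{1},\dots,M_{k}$ it invokes only Lemma \ref{NoConnection2}. It does not compare the levels $L_{2},\dots,L_{k}$. So by aiming for the full energy chain you were aiming at something stronger than what the paper proves.
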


\begin{proof}
As by Theorem \ref{ConnectionFrom0} there is a connection from $0$ to
$u_{j,d_{j}^{\ast}}^{\pm}$, $1\leq j\leq k,$ we have proved that
$\widetilde{M}_{k+1}=\{0\}=M_{k+1}$. The fact that the order of the other sets
is the one given in (\ref{Family}) follows from Lemma \ref{NoConnection2}.
\end{proof}

\section{Appendix}

In this appendix we generalize the lap number property of solutions of linear
equations proved in \cite{Henry85} to the case when we do not have classical
solutions. For this we will use a maximum principle for non-smooth functions
from \cite{Kadlec}.

Let $\mathcal{O}$ be a region in $\mathbb{R}^{2}$ and let $\left(  t_{0}%
,x_{0}\right)  \in\mathcal{O}$ and $\rho,\sigma>0.$ We denote%
\[
Q_{\rho,\sigma}=\{(t,x):t\in(t_{0}-\sigma,t_{0}),\left\vert x-x_{0}\right\vert
<\rho\},
\]
where we assume that $t_{0},x_{0},\rho,\sigma$ are such that $\overline
{Q}_{\rho,\sigma}\subset\mathcal{O}$.

We denote by $W$ the space of all functions from $L^{2}\left(  \mathcal{O}%
\right)  $ such that%
\[
\int_{\mathcal{O}}\left(  \left\vert u\left(  t,x\right)  \right\vert
^{2}+\left\vert \frac{\partial u}{\partial x}\left(  t,x\right)  \right\vert
^{2}\right)  d\mu<+\infty.
\]

As a particular case of Theorem 6.4 in \cite{Kadlec} we obtain the following
maximum and minimum principles.

\begin{theorem}
\label{Maximum}(Maximum principle) Let $u\in W$ be such that
\begin{equation}
\frac{\partial u}{\partial t}-\frac{\partial^{2}u}{\partial x^{2}}\leq0
\label{IneqMax}%
\end{equation}
in the sense of distributions. If%
\[
\sup ess_{\left(  t,x\right)  \in Q_{\rho\nu,\sigma_{1}}}u(t,x)=M,
\]
for some $\nu$, $0<\nu<1$, and any $\sigma_{1}$, where $0<\sigma_{1}<\sigma$,
then $u\left(  t,x\right)  =M$ for a.a. $\left(  t,x\right)  \in
Q_{\rho,\sigma}.$
\end{theorem}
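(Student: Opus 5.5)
The statement is a special case of the weak maximum principle in \cite[Theorem 6.4]{Kadlec}, so the plan is to check its hypotheses for the operator $\partial_t-\partial_x^2$ in one space dimension and read off the conclusion. The key observation is that the heat operator is a linear parabolic operator in divergence form, $\partial_t u-\partial_x(a\,\partial_x u)$ with $a\equiv1$. The coefficients are constant, so they are trivially bounded, measurable and uniformly parabolic, and there are no lower order terms.

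First, I would make precise what (\ref{IneqMax}) means for $u\in W$. For every nonnegative $\varphi\in C_0^\infty(\mathcal{O})$ it means
\[
-\int_{\mathcal{O}}u\,\frac{\partial\varphi}{\partial t}\,d\mu+\int_{\mathcal{O}}\frac{\partial u}{\partial x}\frac{\partial\varphi}{\partial x}\,d\mu\leq0 .
\]
The second integral makes sense because $\partial_x u\in L^2(\mathcal{O})$. This is exactly the notion of weak subsolution used in \cite{Kadlec}, with the class of admissible functions there playing the role of $W$. If Kadlec's class also requires $\partial_t u$ in some dual space, I would note that this is not needed for the interior strong maximum principle statement. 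Alternatively, one can recover it locally from the inequality by a Steklov averaging in $t$.

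Second, I would match the geometry. The cylinders $Q_{\rho,\sigma}$ lie in the past of $(t_0,x_0)$, and the hypothesis is that the essential supremum $M$ over $Q_{\rho\nu,\sigma_1}$ equals the one attained there for every $\sigma_1\in(0,\sigma)$. In other words, the maximum is reached arbitrarily close to the top point $(t_0,x_0)$ inside a smaller concentric cylinder. This is precisely the form of the strong maximum principle in \cite{Kadlec}, which is proved via Harnack-type (Moser) estimates applied to the nonnegative supersolution $M-u$ on $Q_{\rho,\sigma}$. The Harnack inequality gives
\[
\operatorname{ess\,sup}_{\text{lower cylinder}}(M-u)\leq C\,\operatorname{ess\,inf}_{Q_{\rho\nu,\sigma_1}}(M-u).
\]
Letting $\sigma_1\to0$, the infimum on the right tends to $0$, so $M-u=0$ a.e. on $Q_{\rho,\sigma}$. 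A chaining argument covers the full cylinder.

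The main obstacle is bookkeeping rather than analysis: one has to check that the function spaces and the shape of the cylinders in the chosen version of \cite[Theorem 6.4]{Kadlec} agree with those stated here. In particular, the reference must allow $M$ to be approached only along shrinking time layers $\sigma_1$. I would first verify this directly against the statement in \cite{Kadlec}. If it differs, I would rederive the conclusion from the weak Harnack inequality for nonnegative supersolutions, applied on rescaled cylinders. The minimum principle then follows by applying the result to $-u$.
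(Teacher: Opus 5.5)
Your proposal matches the paper, which gives no argument of its own and obtains the statement as a particular case of \cite[Theorem 6.4]{Kadlec} for the heat operator $\partial_t-\partial_x^2$; this is exactly the hypothesis check you describe. Two small points remain: treating $M-u$ as a \emph{nonnegative} supersolution on $Q_{\rho,\sigma}$ uses $M=\operatorname{ess\,sup}_{Q_{\rho,\sigma}}u$, which is part of Kadlec's formulation and holds where the paper applies the result in Theorem~\ref{LapNumber}, but is missing from the statement as printed (for $u(t,x)=x$ the printed hypothesis holds with $M=x_0+\rho\nu$ while $u$ is not constant), so you should state it explicitly; and for supersolutions the weak Harnack inequality controls an $L^{p}$ average of $M-u$ on the lower cylinder ($0<p<3$ in one space dimension), not its essential supremum, although your conclusion $M-u=0$ a.e.\ is unaffected.
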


\begin{theorem}
\label{Minimum}(Minimum principle) Let $u\in W$ be such that
\begin{equation}
\frac{\partial u}{\partial t}-\frac{\partial^{2}u}{\partial x^{2}}\geq0
\label{IneqMin}%
\end{equation}
in the sense of distributions. If%
\[
\inf ess_{\left(  t,x\right)  \in Q_{\rho\nu,\sigma_{1}}}u(t,x)=M,
\]
for some $\nu$, $0<\nu<1$, and any $\sigma_{1}$, where $0<\sigma_{1}<\sigma$,
then $u\left(  t,x\right)  =M$ for a.a. $\left(  t,x\right)  \in
Q_{\rho,\sigma}.$
\end{theorem}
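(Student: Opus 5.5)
The Minimum principle (Theorem \ref{Minimum}) should follow from the Maximum principle (Theorem \ref{Maximum}) by a sign change. Given $u\in W$ with $\frac{\partial u}{\partial t}-\frac{\partial^{2}u}{\partial x^{2}}\geq0$ in the sense of distributions, I will set $v=-u$ and show that $v$ satisfies all the hypotheses of Theorem \ref{Maximum} with $M$ replaced by $-M$. The conclusion for $v$ then translates back to $u$.

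The first step is to check $v\in W$. This is immediate: $|v|^{2}=|u|^{2}$, and $\partial_{x}v=-\partial_{x}u$ as distributions, so the defining integral for $v$ equals that for $u$ and is finite.

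The second step is the differential inequality for $v$. Distributional differentiation is linear, so for every nonnegative test function $\varphi\in C_{0}^{\infty}(\mathcal{O})$ we have
\[
\left\langle \frac{\partial v}{\partial t}-\frac{\partial^{2}v}{\partial x^{2}},\varphi\right\rangle =-\left\langle \frac{\partial u}{\partial t}-\frac{\partial^{2}u}{\partial x^{2}},\varphi\right\rangle \leq0 .
\]
This is exactly (\ref{IneqMax}) for $v$.

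The third step concerns the extremal value. For any set $Q$, the identity $\sup ess_{Q}(-u)=-\inf ess_{Q}u$ holds, since the sign change reverses the essential bounds. So if $\inf ess_{Q_{\rho\nu,\sigma_{1}}}u=M$ for the given $\nu\in(0,1)$ and every $\sigma_{1}\in(0,\sigma)$, then $\sup ess_{Q_{\rho\nu,\sigma_{1}}}v=-M$ for the same $\nu$ and $\sigma_{1}$.

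Finally, Theorem \ref{Maximum} applied to $v$ gives $v=-M$ a.e. in $Q_{\rho,\sigma}$, hence $u=M$ a.e. in $Q_{\rho,\sigma}$.

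Every step is routine. The only point that needs care is that the essential supremum and infimum transform correctly under the sign change, which the identity above settles. Alternatively, one could note that the result is itself a particular case of Theorem 6.4 in \cite{Kadlec}, but the reduction to Theorem \ref{Maximum} is self-contained.
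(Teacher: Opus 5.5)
Your reduction is correct. The function $v=-u$ stays in $W$, and the distributional inequality reverses by linearity of the heat operator. Since $\sup\mathrm{ess}_Q(-u)=-\inf\mathrm{ess}_Q u$, applying Theorem \ref{Maximum} to $v$ gives $u=M$ a.e. in $Q_{\rho,\sigma}$. The paper gives no argument of its own: it asserts that both Theorem \ref{Maximum} and Theorem \ref{Minimum} are particular cases of Theorem 6.4 in \cite{Kadlec}. Your route makes the minimum principle an explicit corollary of the maximum principle, so only one external result is invoked. In substance, this sign flip is what the paper's ``particular case'' claim leaves implicit for the minimum version.

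One caution, which applies equally to the paper. As transcribed, both principles omit the one-sided bound that a strong maximum principle needs. For Theorem \ref{Minimum} this is $u\ge M$ a.e. in $Q_{\rho,\sigma}$, i.e. $M$ must be the essential infimum over the whole cylinder and not only over the small ones; for Theorem \ref{Maximum} it is $u\le M$. Without this bound the statement is false. On a bounded $\mathcal{O}$, the function $u(t,x)=x$ satisfies $\partial_t u-\partial_x^2 u=0$ and $\inf\mathrm{ess}_{Q_{\rho\nu,\sigma_1}}u=x_0-\rho\nu$ for every $\sigma_1\in(0,\sigma)$. Yet $u$ is not constant on $Q_{\rho,\sigma}$.

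Your reduction inherits whatever Theorem \ref{Maximum} asserts, so it is only as good as that statement. Once the bound is restored, your proof goes through verbatim, since $u\ge M$ becomes $v\le -M$. In the application in Theorem \ref{LapNumber} the bound does hold: $u\ge 0$ on the rectangle $R\subset S_{C_1}$ and $M=0$.
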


\bigskip

We are ready to prove the lap-number property, saying that the number of zeros
is a non-increasing function of time.

\begin{theorem}
\label{LapNumber}Let $r\left(  t,x\right)  $ be a continuous function and
$u\in C([t_{0},t_{1}],H_{0}^{1}\left(  \Omega\right)  )\cap L^{2}\left(
t_{0},t_{1};H^{2}\left(  \Omega\right)  \right)  $ be such that $\dfrac
{du}{dt}\in L^{2}\left(  t_{0},t_{1};L^{2}\left(  \Omega\right)  \right)  $
and satisfies the equation%
\begin{equation}
\frac{\partial u}{\partial t}-\frac{\partial^{2}u}{\partial x^{2}%
}=r(t,x)u\text{, }0<x<1,\ t_{0}<t\leq t_{1}. \label{EquationLinear}%
\end{equation}
Then the number of components of%
\[
\{x:0<x<1,\ u\left(  t,x\right)  \not =0\}
\]
is a non-increasing function of $t$.
\end{theorem}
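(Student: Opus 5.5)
The plan follows Henry's classical argument in \cite{Henry85}, replacing the classical maximum principle by Theorems \ref{Maximum}--\ref{Minimum}. First, the equation is reduced to a sign-preserving form. Fix $t_0\le s<t_1$ and put $\mu=\max|r|$ on $[t_0,t_1]\times[0,1]$ (finite by continuity). Then $v(t,x)=e^{-\mu(t-s)}u(t,x)$ satisfies $v_t-v_{xx}=(r-\mu)v$, where $r-\mu\le0$. Consequently, wherever $v\ge0$ we have $v_t-v_{xx}\le0$, and wherever $v\le0$ we have $v_t-v_{xx}\ge0$, in the sense of distributions. The regularity $u\in C([t_0,t_1],H_0^1)\cap L^2(H^2)$ with $u_t\in L^2(L^2)$ places $v$ in $W$ locally. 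By the embedding $H_0^1\subset C([0,1])$, it is also continuous in $(t,x)$, so nodal domains are open sets.

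Second, I will prove the key claim: each connected component $D$ of $\{(t,x)\in(s,t_1]\times(0,1):u\neq0\}$ must reach the initial line. That is, $\overline D\cap(\{s\}\times(0,1))$ contains a point where $u(s,\cdot)\neq0$, and this point lies in a single component of $\{u(s,\cdot)\neq0\}$. Suppose $u>0$ on $D$ but $D$ does not reach $t=s$ with positive values. Consider $w=v$ on $D$ and $w=0$ off $D$; this is a nonnegative subsolution. It vanishes on the lateral boundary because of the Dirichlet condition and the boundary of $D$, and it vanishes on $\{t=s\}$. Take the maximum $M>0$ of $w$ over $\overline D\cap\{t\le\tau\}$, attained at some interior point $(t^*,x^*)$ with $t^*>s$. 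Applying Theorem \ref{Maximum} on small parabolic cylinders $Q_{\rho,\sigma}$ below $(t^*,x^*)$ gives $w\equiv M$ on those cylinders. Chaining such cylinders backward in time inside $D$ (possible since $D$ is open and connected), $w=M$ propagates until it contradicts $w=0$ on $\partial D$ or at $t=s$. The same argument with Theorem \ref{Minimum} handles $u<0$.

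Third, I count components. Let $N(t)$ be the number of components of $\{x:u(t,x)\neq0\}$. Each component $I$ of $\{u(t,\cdot)\neq0\}$ lies in one space–time nodal domain $D_I$. By step two, $D_I$ meets some component $J$ of $\{u(s,\cdot)\neq0\}$ with the same sign. I then need injectivity: distinct $I_1,I_2$ must not land in the same $J$ unless they lie in the same $D$. Two intervals of the same sign at time $t$ are separated by a point where $u(t,\cdot)$ has the opposite sign or a zero interval. If $D_{I_1}=D_{I_2}=D$, connect $I_1$ and $I_2$ by a path in $D$. The region enclosed between this path and the segment at time $t$ contains the separating point. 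Applying the maximum and minimum principles of step two on that enclosed region, which has $u\ge0$ on its boundary, forces $u\ge0$ there. A positive value of the opposite sign would contradict this, and a zero gap would force $u\equiv0$ by the strong principle. The map $I\mapsto J$ is therefore well defined and injective up to merging, which gives $N(t)\le N(s)$.

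The main obstacle is step two, namely making the strong maximum principle of Theorem \ref{Maximum} work on irregular nodal domains for merely strong (non-classical) solutions. I would first try to use continuity of $u$ to cover compact pieces of $D$ by finite chains of cylinders $\overline Q_{\rho,\sigma}\subset D$. The degenerate case $u(t,\cdot)\equiv0$ on an interval also needs care, and I would treat it by the same propagation argument.
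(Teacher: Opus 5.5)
Your architecture matches the paper's, which follows Henry. You attach to each component of $\{u(t,\cdot)\neq0\}$ its space--time nodal component, use Theorem \ref{Maximum} to show that this component reaches the initial time, and show that two different components at the final time cannot share a space--time component. Step two is fine in substance, with one caveat about how you justify the chaining. Theorem \ref{Maximum} only propagates $v=M$ backward in time, and by continuity sideways at the top time, not along arbitrary paths in $D$, so connectedness of $D$ is not the right reason. It is not needed anyway. Spreading sideways along the slice of $D$ at time $t^*$ already gives a contradiction, because that slice is an open interval whose endpoints are zeros of $u$. The paper instead takes the first time at which the maximum is attained.

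The genuine gap is a sign error in step three, in the zero-gap case.
\begin{itemize}
\item There $v\geq0$ near a point of the top segment where $v=0$, so you need a strong minimum principle. Theorem \ref{Minimum} requires $v_t-v_{xx}\geq0$ near that point. Your single normalization gives $v_t-v_{xx}=(r-\mu)v\leq0$ wherever $v\geq0$, so $v$ is a subsolution there and neither Theorem \ref{Maximum} nor Theorem \ref{Minimum} applies.
\item The paper fixes this by changing the weight in this step. The function $e^{\mu t}u$ has the same zero set and satisfies the equation with coefficient $r+\mu\geq0$, so it is a supersolution wherever it is nonnegative.
\item Even then, ``$u\equiv0$ on a cylinder below the zero'' is a contradiction only once you know $u>0$ there. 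The paper gets this by taking, among the zeros of $u$ in the closed region between the path and the top segment, one with minimal time $t'$. That zero is not on the path, where $u>0$. Every point of the region below time $t'$ reaches the path along a downward vertical segment containing no zeros, so $u>0$ on a rectangle just below $(t',x')$. Theorem \ref{Minimum} then forces $u\equiv0$ on that rectangle, which is the contradiction.
\item Since $u(t,\cdot)$ always vanishes somewhere between $I_1$ and $I_2$, this one argument also covers your opposite-sign case. You therefore need neither the case split nor a weak principle on the enclosed region. Such a weak principle is not what Theorems \ref{Maximum}--\ref{Minimum} provide. Also, $u\geq0$ is known only on the path, not on the top segment, which contains the separating point.
\end{itemize}
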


\begin{proof}
We follow similar lines as in \cite[Theorem 6]{Henry85}.

Denote $Q\left(  t\right)  =\{x\in\left(  0,1\right)  :u\left(  t,x\right)
\not =0\}$. We need to show that there is an injective map from the components
of $Q\left(  t_{1}\right)  $ to the components of $Q\left(  t_{0}\right)  $ if
$t_{1}>t_{0}$. If we denote by $C$ a component of $Q\left(  t_{1}\right)  $
and by $S_{C}$ the component of $[t_{0},t_{1}]\times\left(  0,1\right)
\cap\{u\left(  t,x)\not =0\right)  \}$ which contains $C$, then in order to
obtain the injective map it is necessary to prove two facts:

\begin{enumerate}
\item $S_{C}\cap Q(t_{0})\not =\varnothing;$

\item If $C_{1},C_{2}$ are two components of $Q\left(  t_{1}\right)  $, then
$S_{C_{1}}\cap S_{C_{2}}=\varnothing.$
\end{enumerate}

Let us prove the first statement by contradiction, so assume that $S_{C}\cap
Q(t_{0})=\varnothing.$ We can assume without loss of generality that $r\left(
t,x\right)  <0$, because this property is satisfied for the function $W\left(
t,x\right)  =u\left(  t,x\right)  e^{-\lambda t}$ with $\lambda>0$ large
enough and the components of these two functions coincide. Consider for
example that $u\left(  t,x\right)  >0$ in $S_{C}$. Let $M=\max_{\left(
t,x\right)  \in S_{C}}u\left(  t,x\right)  $. By hypothesis and the Dirichlet
boundary conditions this maximum has to be attained at a point $\left(
t^{\prime},x^{\prime}\right)  $ such that $t_{0}<t^{\prime}\leq t_{1}$,
$0<x^{\prime}<1$. Also, there has to be an $\varepsilon>0$ such that if
$\left(  t,x\right)  \in S_{C}$ and $t_{0}<t\leq t_{0}+\varepsilon$, then
$u\left(  t,x\right)  <M$, as otherwise there would be a sequence $\left(
t_{n},x_{n}\right)  \in S_{C}$, $t_{n}>t_{0},$ such that $t_{n}\rightarrow
t_{0}$ and $u\left(  t_{n},x_{n}\right)  =M$. By the continuity of $u$ this
would imply that $u\left(  t_{0},x_{0}\right)  =M$ for some $\left(
t_{0},x_{0}\right)  \in S_{C}$, which is a contradiction. Then we can choose
$t^{\prime}$ as the first time when the maximum is attained, so $u\left(
t,x\right)  <M$ for all $\left(  t,x\right)  \in S_{C},\ t_{0}<t<t^{\prime}$.
By the continuity of $u$ there exists a rectangle $R=[t^{\prime}%
-\delta,t^{\prime}]\times\lbrack x^{\prime}-\gamma,x^{\prime}+\gamma]$ such
that $R$ belongs to $S_{C}$. In order to apply Theorem \ref{Maximum} we put
$\mathcal{O}=R$ and
\[
Q_{\gamma,\delta}=\{(t,x):t\in(t^{\prime}-\delta,t^{\prime}),\left\vert
x-x^{\prime}\right\vert <\gamma\}.
\]
We have that%
\[
\underset{(t,x)\in Q_{\nu\gamma,\sigma_{1}}}{\sup}\ u(t,x)=M,
\]
for some $0<\nu<1$ and any $0<\sigma_{1}<\delta$. Since $u$ satisfies
(\ref{IneqMax}), we conclude from Theorem \ref{Maximum} that $u\left(
t,x\right)  =M$ for all $\left(  t,x\right)  \in Q_{\rho,\sigma}$, which is a contradiction.

For the second statement suppose the existence of two disjoints components
$C_{1},C_{2}$ of $Q\left(  t_{1}\right)  $ such that $S_{C_{1}}\cap S_{C_{2}%
}\not =\varnothing$, which implies in fact that $S_{C_{1}}=S_{C_{2}}$. In this
case we can assume that $r\left(  t,x\right)  >0$, being this justified by the
function $W\left(  t,x\right)  =u\left(  t,x\right)  e^{\lambda t}$ with
$\lambda>0$ large enough. Let for example $u\left(  t,x\right)  >0$ in
$S_{C_{1}}$ and assume that the interval $C_{1}$ has lesser values than the
interval $C_{2}$. Also, it is clear that between $C_{1}$ and $C_{2}$ there
must exist a point $\left(  t_{1},x_{0}\right)  $ such that $u\left(
t_{1},x_{0}\right)  =0$. On the other hand, the set $S_{C_{1}}\cap\left(
t_{0},t_{1}\right)  \times\lbrack0,1]$ is path connected. Thus, there exists a
simple path $\xi$ such that one end point is in $\{t_{1}\}\times C_{1}$ and
the other one is in $\{t_{1}\}\times C_{2}$. Let us consider the set $L$ of
all points which are above the curve $\xi$ and such that the function $u$
vanishes at them. This set is non-empty because $\left(  t_{1},x_{0}\right)
\in L$. Since $L$ is compact, the function $g:L\rightarrow\lbrack t_{0}%
,t_{1}]$ given by $g\left(  t,x\right)  =t$ attains it minimum at a certain
point $\left(  t^{\prime},x^{\prime}\right)  \in L$ such that $t_{0}%
<t^{\prime}$. Then there exists a set $R=[t^{\prime}-\delta,t^{\prime}%
)\times\lbrack x^{\prime}-\gamma,x^{\prime}+\gamma]$ which belongs to
$S_{C_{1}}$. Let $\mathcal{O}=R$ and
\[
Q_{\gamma,\delta}=\{(t,x):t\in(t^{\prime}-\delta,t^{\prime}),\left\vert
x-x^{\prime}\right\vert <\gamma\}.
\]
We have that%
\[
\underset{(t,x)\in Q_{\nu\gamma,\sigma_{1}}}{\inf}\ u(t,x)=0,
\]
for some $0<\nu<1$ and any $0<\sigma_{1}<\delta$. Since $u$ satisfies
(\ref{IneqMin}), we conclude from Theorem \ref{Minimum} that $u\left(
t,x\right)  =0$ for all $\left(  t,x\right)  \in Q_{\rho,\sigma}$, which is a contradiction.
\end{proof}

\bigskip

\textbf{Acknowledgments.}

The first author is a fellow of the FPU program of the Spanish Ministry of
Education, Culture and Sport, reference FPU15/03080.

This work has been partially supported by the Spanish Ministry of Science,
Innovation and Universities, project PGC2018-096540-B-I00, by the Spanish
Ministry of Science and Innovation, project PID2019-108654GB-I00, and by the
Junta de Andaluc\'{\i}a and FEDER, project P18-FR-4509.

During the preparation of this manuscript our colleague and friend Mar\'{\i}a
Jos\'{e} Garrido-Atienza passed away. She was a very kind and warm person and
we will miss her a lot. We dedicate this paper to her memory.

We would like to thank also the referees for their useful comments.

\end{document}